\documentclass[11pt]{amsart} \usepackage{latexsym, amssymb, stmaryrd}
\usepackage[T1]{fontenc}

\DeclareTextSymbol{\thh}{T1}{254}

\newtheorem{thm}{Theorem}[section]
\newtheorem{lemma}[thm]{Lemma}
\newtheorem{prop}[thm]{Proposition}
\newtheorem{cor}[thm]{Corollary}

\theoremstyle{definition}
\newtheorem{df}[thm]{Definition}
\newtheorem{rmk}[thm]{Remark}

\newtheorem{fact}[thm]{Fact}

\newtheorem{question}[thm]{Question}




\newcommand{\R}{\mathbb{R}}
\newcommand{\Z}{\mathbb{Z}}


\newcommand{\curly}[1]{\mathcal{#1}}
\newcommand{\A}{\curly{A}}

\newcommand{\C}{\curly{C}}

\newcommand{\E}{\curly{E}}

\newcommand{\G}{\curly{G}}

\newcommand{\m}{\mathcal{M}}
\newcommand{\N}{\mathcal{N}}

\newcommand{\la}{\curly{L}}

\makeatletter

\def\indsym#1#2{%
  \setbox0=\hbox{$\m@th#1x$}%
  \kern\wd0%
  \hbox to 0pt{\hss$\m@th#1\mid$\hbox to 0pt{$\m@th#1^{#2}$}\hss}%
  \lower.9\ht0\hbox to 0pt{\hss$\m@th#1\smile$\hss}%
  \kern\wd0}

\def\nindsym#1#2{%
  \setbox0=\hbox{$\m@th#1x$}%
  \kern\wd0%
  \hbox to 0pt{\hss$\m@th#1\not$\kern1.4\wd0\hss}
  \hbox to 0pt{\hss$\m@th#1\mid$\hbox to 0pt{$\m@th#1^{\,#2}$}\hss}%
  \lower.9\ht0\hbox to 0pt{\hss$\m@th#1\smile$\hss}%
  \kern\wd0}

\def\dotminussym#1#2{%
  \setbox0=\hbox{$\m@th#1-$}%
  \kern.5\wd0%
  \hbox to 0pt{\hss\hbox{$\m@th#1-$}\hss}%
  \raise.6\ht0\hbox to 0pt{\hss$\m@th#1.$\hss}%
  \kern.5\wd0}

\def \r { {\mathbb R} }
\def \<{\langle}
\def \>{\rangle}
\def \n {\mathbb N}

\def \*Z {{{^*}\Z}}

\def \((  {(\!(}
\def \)) {)\!)}

\def \tp{\operatorname{tp}}

\def \int{\operatorname{int}}
\numberwithin{equation}{section}

\def \Aut{\operatorname{Aut}}

\def \k{\mathcal{K}}

\def \u{\mathcal{U}}

\def\R{\mathcal R}
\def \Th{\operatorname{Th}}
\def \KH{\operatorname{KH}}
\def \Inn{\operatorname{Inn}}
\def \AppInn{\operatorname{AppInn}}
\def \vna{\operatorname{vNa}}
\def \tr{\operatorname{tr}}


\allowdisplaybreaks[2]

\begin{document}

\title{Existentially closed II$_1$ factors}
\author{Ilijas Farah, Isaac Goldbring, Bradd Hart, and David Sherman}
\thanks{The authors would like to thank NSERC and the Fields Institute for supporting this work.  Goldbring's work was partially supported by NSF grant DMS-1007144. Farah's work was partially supported by 
 a Velux Visiting Professorship and the Danish Council for Independent Research through Asger 
 T\"ornquist's grant, no. 10-082689/FNU.  Sherman's work was partially supported by NSF grant DMS-1201454.}

\address{Department of Mathematics and Statistics, York University, 4700 Keele Street, North York, Ontario, Canada, M3J 1P3, and Matematicki Institut, Kneza Mihaila 35, Belgrade, Serbia, and
Department of Mathematical Sciences, University of Copenhagen, Universitetsparken 5, 2100 Copenhagen, Denmark}
\email{ifarah@mathstat.yorku.ca}
\urladdr{http://www.math.yorku.ca/~ifarah}

\address {Department of Mathematics, Statistics, and Computer Science, University of Illinois at Chicago, Science and Engineering Offices M/C 249, 851 S. Morgan St., Chicago, IL, 60607-7045}
\email{isaac@math.uic.edu}
\urladdr{http://www.math.uic.edu/~isaac}

\address{Department of Mathematics and Statistics, McMaster University, 1280 Main Street W., Hamilton, Ontario, Canada L8S 4K1}
\email{hartb@mcmaster.ca}
\urladdr{http://www.math.mcmaster.ca/~bradd}

\address{Department of Mathematics, University of Virginia, P. O. Box 400137, Charlottesville, VA 22904-4137}
\email{dsherman@virginia.edu}
\urladdr{http://people.virginia.edu/~des5e}

\begin{abstract}
We examine the properties of existentially closed ($\R^\omega$-embeddable) II$_1$ factors.  In particular, we use the fact that every automorphism of an existentially closed ($\R^\omega$-embeddable) II$_1$ factor is approximately inner to prove that $\Th(\R)$ is not model-complete.  We also show that $\Th(\R)$ is complete for both finite and infinite forcing and use the latter result to prove that there exist continuum many nonisomorphic existentially closed models of $\Th(\R)$.
\end{abstract}

\maketitle

\section{Introduction}

This paper continues the model-theoretic study of tracial von Neumann algebras initiated in \cite{FHS1}, \cite{FHS2}, \cite{FHS3}, and \cite{GHS}.  Our main focus is studying the class of \emph{existentially closed} tracial von Neumann algebras.  Roughly speaking, a tracial von Neumann algebra $M$ is existentially closed if any system of $*$-polynomials with parameters from $M$ that has a solution in an extension of $M$ already has an approximate solution in $M$.  It has been observed by many people that an existentially closed tracial von Neumann algebra must be a McDuff II$_1$ factor; see \cite{GHS} for a proof.  In particular, free group factors and ultraproducts of matrix algebras are not existentially closed.

Since the theory of tracial von Neumann algebras is universally axiomatizable, standard model theory shows that every tracial von Neumann algebra is contained in an existentially closed one.  A natural problem arises:  name a concrete existentially closed II$_1$ factor.  Well, one might guess that the hyperfinite II$_1$ factor $\R$ is existentially closed.  It turns out that if one restricts one's attention to tracial von Neumann algebras that embed into ultrapowers of $\R$, henceforth referred to as $\R^\omega$-embeddable von Neumann algebras, then $\R$ is existentially closed; in model-theoretic terms:  $\R$ is an existentially closed model of its universal theory.  (This observation had been made independently by C. Ward Henson and the fourth named author.)  

Recall that the Connes Embedding Problem (CEP) asks whether every II$_1$ factor is $\R^\omega$-embeddable.  It follows that a positive solution to the CEP implies that $\R$ is an existentially closed II$_1$ factor.  In fact, CEP is equivalent to the statement that $\R$ is an existentially closed II$_1$ factor (see Corollary \ref{eccep}).


In Section \ref{inductive}, we show that $\R$ is an existentially closed model of its universal theory and that all existentially closed $\R^\omega$-embeddable factors have the same $\forall\exists$-theory as $\R$.  We also show that the only possible complete $\forall\exists$-axiomatizable theory of $\R^\omega$-embeddable II$_1$ factors is $\Th(\R)$.  In particular, theories of free group factors or ultraproducts of matrix algebras are not $\forall\exists$-axiomatizable.

In Section \ref{auto}, we show that every automorphism $\alpha$ of an existentially closed II$_1$ factor $M$ is \emph{approximately inner}, meaning that, for every finite subset $F$ of $M$ and every $\epsilon>0$, there is a unitary $u$ from $M$ such that $\|\alpha(x)-uxu^*\|_2<\epsilon$ for all $x\in F$.  We use this result to show that $\Th(\R)$ is \emph{not} model-complete, meaning that not every embedding between models of $\Th(\R)$ is elementary.  (It was shown in \cite{GHS} that a positive solution to CEP implied that $\Th(\R)$ was not model-complete.)  As a consequence, we deduce that the class of existentially closed $\R^\omega$-embeddable II$_1$ factors is not an axiomatizable class (a result that was shown to hold for the potentially larger class of existentially closed II$_1$ factors in \cite{GHS}).

In Section \ref{amalg}, we show that every existentially closed $\R^\omega$-embeddable II$_1$ factor is a \emph{strong amalgamation base}:  if $M_0$ is an existentially closed $\R^\omega$-embeddable II$_1$ factor and $f_i:M_0\to M_i$, $i=1,2$, are embeddings into $\R^\omega$-embeddable II$_1$ factors $M_1$ and $M_2$, then there is an $\R^\omega$-embeddable II$_1$ factor $N$ and embeddings $g_i:M_i\to N$ such that $g_1\circ f_1=g_2\circ f_2$ and $g_1(M_1)\cap g_2(M_2)=g_1(f_1(M_0))$.  Until this point, the best known amalgamation result appeared in \cite{BDJ}, where it is shown that the amalgamated free product $M_1*_\R M_2$ is $\R^\omega$-embeddable if both $M_1$ and $M_2$ are $\R^\omega$-embeddable.  Notice our result is not a generalization of the result in \cite{BDJ} as we do not claim that our amalgam is the amalgamated free product; on the other hand, our result applies to continuum many II$_1$ factors instead of applying solely to $\R$.

In the final two sections, we study subclasses of the class of existentially closed II$_1$ factors that are even more generic.  These factors are obtained by \emph{model-theoretic forcing}.  It is shown that $\Th(\R)$ is complete for both of these notions of forcing, meaning that $\Th(\R)$ is the theory of the ``generic'' $\R^\omega$-embeddable factors obtained from these notions of forcing.  As a consequence, we can infer that there are continuum many nonisomorphic models of $\Th(\R)$ that are existentially closed.

Throughout this paper, by a \emph{tracial von Neumann algebra} we mean a pair $(A,\tr)$, where $A$ is a von Neumann algebra and $\tr$ is a fixed, faithful, normal tracial state, although we often suppress mention of the tracial state and simply refer to $A$ as a tracial von Neumann algebra if there is no fear of confusion.  Given a tracial von Neumann algebra $A$, we often consider the \emph{$2$-norm} on $A$ given by $\|x\|_2:=\sqrt{\tr(x^*x)}$.  Given a $*$-monomial $p(\vec x,\vec x^*)$ in the variables $\vec x$ and their adjoints and a tuple $\vec a$ from a tracial von Neumann algebra $A$, the quantity $\tr (p(\vec a,\vec a^*))$ is referred to as a \emph{moment of $\vec a$} and the (total) degree of $p$ is called the \emph{order} of $\tr(p(\vec a,\vec a^*))$.

We will work in the setting of continuous model theory.  We refer the reader to \cite{FHS2} for a rapid introduction to this setting, where it is also explained how to treat von Neumann algebras as metric structures.  However, for the sake of completeness, we recall some basic notions from continuous model theory and discuss the notion of existentially closed structures in the next subsection.  

\subsection{Existentially closed structures}

Fix a continuous language $\la$ (e.g. the language for tracial von Neumann algebras).  The set of $\la$-terms is the smallest set of expressions containing the constant symbols and variables and closed under the function symbols.  For example, in the language for tracial von Neumann algebras, these would be $*$-polynomials.  Atomic $\la$-formulae are expressions of the form $R(t_1,\ldots,t_n)$, where $R$ is a predicate symbol and each $t_i$ is a term.  Continuing with the example of tracial von Neumann algebras, $\tr(p(\bar x,\bar x^*))$ is an atomic formula, where $p(\bar x,\bar x^*)$ is a $*$-polynomial, as is $\|p(\bar x,\bar x^*)\|_2$.  The set of quantifier-free $\la$-formulae is the set of $\la$-formulae obtained from the atomic $\la$-formulae by using continuous functions $f:\r^n\to \r$ as connectives.  If one, in addition, allows the use of ``quantifiers'' $\sup$ and $\inf$, then one arrives at the set of $\la$-formulae.  

Returning to tracial von Neumann algebras again, the expression $\varphi$ given by $\sup_y(\|xy-yx\|_2+\|xx^*-1\|_2)$ is a formula; we may write $\varphi$ as $\varphi(x)$ to indicate that the variable $x$ is \emph{free} in $\varphi$.  If $M$ is a tracial von Neumann algebra and $a\in M$, then plugging $a$ in for $x$ in $\varphi$ returns a real number $\varphi(a)^M$.  The \emph{condition} $\varphi(x)=0$ asserts that $x$ is a unitary element of the center of $M$.  Notice that the $\sup$ equalling $0$ tells us that $x$ commutes with all elements of $M$; it is for this reason we sometimes think of $\sup$ as a universal quantifier, a fact we elaborate on below.

Formulae with no free variables are called \emph{sentences} and conditions $\sigma=0$ where $\sigma$ is a sentence are called \emph{closed conditions}.  Closed conditions actually assert something.  For example, if $\tau$ is the sentence $\sup_x\sup_y \|xy-yx\|_2$, then the closed condition $\tau=0$ holds in a tracial von Neumann algebra $M$ if and only if $M$ is abelian.  It is important to note that, given a sentence $\sigma$, there is a compact interval $I\subseteq \r$ such that $\sigma^M\in I$ for every $\la$-structure $M$; if $I$ is contained in the set of nonnegative real numbers, then we call $\sigma$ a \emph{nonnegative sentence}.


Suppose that $M$ and $N$ are $\la$-structures and $i:M\to N$ is an embedding, that is, an injective map that preserves all the interpretations of symbols in $\la$.  We say that $i$ is an \emph{elementary embedding} if, for every $\la$-formula $\varphi(x)$ and every tuple $a$ from $M$, we have $\varphi(a)^M=\varphi(i(a))^N$.  If we relax the previous definition to only hold for formulae of the form $\inf_{\vec x}\varphi(\vec x)$ with $\varphi(\vec x)$ quantifier-free, we say that $i$ is an \emph{existential embedding}.  If $M$ is a substructure of $N$, henceforth denoted $M\subseteq N$, and the inclusion map $i:M\to N$ is an elementary embedding, we say that $M$ is an \emph{elementary substructure} of $N$ and write $M\preceq N$. 

An $\la$-structure $M$ \emph{models the closed condition} $\sigma=0$, denoted $M\models \sigma=0$, if $\sigma^M=0$.  If $T$ is a set of closed conditions, then $M\models T$ if it models all of the conditions in $T$; we let $\operatorname{Mod}(T)$ denote the class of all models of $T$.  If $T$ is a set of closed conditions and $\sigma$ is a sentence, we say that $T$ logically implies the condition $\sigma=0$, denoted $T\models \sigma=0$, if every model of $T$ is also a model of $\sigma=0$.

In this paper, an \emph{$\la$-theory} is a collection $T$ of closed conditions closed under logical implication, meaning if $T\models \sigma=0$, then $\sigma=0$ belongs to $T$.  If $M$ is an $\la$-structure, then the \emph{theory of $M$} is the theory $$\Th(M):=\{\sigma=0 \ : \ \sigma^M=0\}.$$  A \emph{complete theory} is a theory of the form $\Th(M)$ for some $M$.  If $\Th(M)=\Th(N)$, we say that $M$ and $N$ are \emph{elementarily equivalent} and write $M\equiv N$.  Given any class $\k$ of $\la$-structures, we define the theory of $\k$ to be the theory $$\Th(\k):=\{\sigma=0 \ : \ \sigma^M=0 \text{ for all }M\in \k\}=\bigcap_{M\in \k}\Th(M).$$

Suppose that $M$ and $N$ are $\la$-structures.  In analogy with classical logic, it is remarked in \cite[Section 6]{FHS3} that $M$ embeds into an ultrapower of $N$ if and only if $\sigma^M\leq \sigma^N$ for every $\sup$-sentence $\sigma$, that is, when $\sigma$ is of the form $\sigma=\sup_{\vec x}\varphi(\vec x)$ with $\varphi$ quantifier-free.  Notice that this latter property is equivalent to the property that $\sigma^N=0\Rightarrow \sigma^M=0$ for every nonnegative $\sup$-sentence $\sigma$.  Indeed, while one direction is clear, the other direction follows from the fact that, if $\sigma^N=r$, then $\sup_{\vec x}(\max(\varphi(\vec x)-r,0))$ is nonnegative and has value $0$ in $N$.

With the preceding paragraph in mind, given a theory $T$, we let $T_\forall$ denote the subset of $T$ containing only those conditions $\sigma=0$ for which $\sigma$ is a nonnegative $\sup$-sentence.  Given a nonnegative $\sup$-sentence $\sigma=\sup_{\vec x}\varphi(\vec x)$, the condition $\sigma=0$ asserts that, for all $\vec x$, we have $\varphi(\vec x)=0$; it is for this reason that we may call such a condition a \emph{universal condition} and hence refer to $T_\forall$ as the \emph{universal theory of $T$}.  If $T=\Th(M)$ for some structure $M$, we write $\Th_\forall(M)$ for $T_\forall$.  The content of the previous paragraph may be summarized as:  $M$ embeds into an ultrapower of $N$ if and only if $M\models \Th_\forall(N)$.  More generally, it is readily verified that, given an arbitrary (i.e. perhaps incomplete) theory $T$, we have $M\models T_\forall$ if and only if $M$ embeds into a model of $T$.

Suppose that $\k$ is a class of $\la$-structures.  We say that $\k$ is an \emph{axiomatizable class} if $\k=\operatorname{Mod}(T)$ for some theory $T$.  If there is a theory $T$ such that $\k=\operatorname{Mod}(T_\forall)$, we say that $T$ is \emph{universally axiomatizable}.  By the preceding paragraph, $T$ is universally axiomatizable if and only if a substructure of a model of $T$ is also a model of $T$.  

If $\k$ is a class of $\la$-structures, we say that $\k$ is \emph{inductive} if $\k$ is closed under unions of chains.  If $T$ is an $\la$-theory, we say that $T$ is \emph{inductive} if $\operatorname{Mod}(T)$ is inductive.  In classical logic, a theory is inductive if and only if it is $\forall\exists$-axiomatizable.  In analogy with the preceding paragraphs, the situation in continuous logic is as follows:  given a theory $T$, we let $T_{\forall\exists}$ denote the collection of conditions $\sigma=0$ where $\sigma$ is nonnegative and of the form $\sup_{\vec x}\inf_{\vec y}\varphi(\vec x,\vec y)$ with $\varphi(\vec x,\vec y)$ quantifier-free; notice that such a condition asserts that, for every $\vec x$, there is $\vec y$ such that $\varphi(\vec x,\vec y)$ is (almost) $0$, whence such a condition is morally an $\forall\exists$ assertion.  It is shown in \cite{U} that an axiomatizable class $\k$ is inductive if and only if it is \emph{$\forall\exists$-axiomatizable}, that is, if and only if it is of the form $\operatorname{Mod}(T_{\forall\exists})$ for some theory $T$.  We also say that a theory is $\forall\exists$-axiomatizable if the class of its models is $\forall\exists$-axiomatizable.


In \cite{FHS2}, it is shown that the class of tracial von Neumann algebras is a universally axiomatizable class and the subclass of II$_1$ factors is an $\forall\exists$-axiomatizable class.  By the above fact concerning $\Th_\forall(\R)$, a tracial von Neumann algebra $M$ models $\Th_\forall(\R)$ if and only if it embeds in an ultrapower of $\R$.

\begin{df}
Fix a class $\k$ of $\la$-structures.  For $M,N\in \k$ with $M\subseteq N$, we say that $M$ is \emph{existentially closed in $N$} if, for any quantifier-free formula $\varphi(x,y)$, any $a\in M$, we have $$\inf_{c\in M}\varphi(c,a)^M=\inf_{b\in N}\varphi(b,a)^N.$$ We say that $M\in \k$ is \emph{existentially closed (e.c.) for $\k$} if $M$ is existentially closed in $N$ for every $N\in \k$ with $M\subseteq N$. If $\k$ is the class of models of some theory $T$, we call an e.c. member of $\k$ an \emph{e.c. model of $T$}.
\end{df}

It is well known that if $M$ and $N$ are models of $T_\forall$ with $M$ existentially closed in $N$ and $N$ an e.c. model of $T_\forall$, then $M$ is an e.c. model for $T_\forall$; see the proof of Lemma 6.30 in \cite{Poizat}.  In particular, an elementary substructure of an e.c. model of $T_\forall$ is an e.c. model of $T_\forall$.

It is also well known that if $\k$ is an inductive class, then any member of $\k$ is contained in an e.c. member of $\k$.  If, in addition, the class $\k$ is axiomatizable and the language is, say, countable, then, by Downward L\"owenheim-Skolem, any member of $\k$ is contained in an e.c. member of $\k$ of the same density character.

We say that the class $\k$ is \emph{model-complete} if, for any $M,N\in \k$ with $M\subseteq N$, we have $M\preceq N$.  If $\k$ is a model-complete axiomatizable class, say $\k=\operatorname{Mod}(T)$, we also say that $T$ is model-complete.  Robinson's test for model-completeness states that $T$ is model-complete if and only if every embedding between models of $T$ is existential.

We say that a class $\mathcal C$ of structures is \emph{model-consistent with $\k$} if every element of $\k$ is contained in an element of $\mathcal C$.  For example, the subclass of e.c. elements of $\k$ is model-consistent with $\k$ (by the above remarks).

For any theory $T$, we let $\mathcal E_T$ denote the class of existentially closed models of $T_\forall$.  In general, $\mathcal E_T$ need not be axiomatizable.  If $\mathcal E_T$ is axiomatizable, say $\mathcal E_T=\operatorname{Mod}(T')$, we call $T'$ the \emph{model-companion of $T$}.  (The use of the definite article ``the'' is justified as the model-companion of a theory, if it exists, is unique up to logical equivalence.)  Note that the model-companion of $T$ is necessarily model-complete by Robinson's test.  Conversely, if $T$ is a model-complete theory, then all models of $T$ are e.c. models of $T_\forall$ and $T$ is the model-companion of $T_\forall$.

In \cite{GHS} it is shown that $T_{\vna}$ does not have a model companion, that is, the class of existentially closed tracial von Neumann algebras is not axiomatizable.  It was also shown there that a positive solution to CEP implies that the same conclusion remains true for the class of $\R^\omega$-embeddable factors.  The main result of Section 3 of this paper shows that we may remove the CEP assumption from this latter result.

Suppose that $T$ is a universally axiomatizable theory.  In \cite{U}, it is shown that ``most'' elements of $T$ are e.c. in a sense we now explain. Let $X$ denote the space of all models $M$ of $T$ equipped with a distinguished countable dense subset $M_0\subseteq M$, enumerated as $(m_i:i<\omega)$.  We can define a topology on $X$ by declaring sets of the form $$\{M \in X \ : \ \varphi^M(m_{i_1},\ldots,m_{i_n})<\epsilon\}$$ to be basic open sets, where $\varphi$ is a quantifier-free formula, $i_1,\ldots,i_n\in \n$ and $\epsilon\in \r^{>0}\cup \{+\infty\}$.  In this way, $X$ becomes a Polish space.  It is a consequence of results from \cite{U} that the set of e.c. elements of $X$ is dense in $X$ and any reasonable probability measure on $X$ gives the set of e.c. models full measure.

In our applications to von Neumann algebras, the universal theory $T$ at hand is either the theory of tracial von Neumann algebras $T_{\vna}$ or the theory of $\R^\omega$-embeddable tracial von Neumann algebras $\Th_\forall(\R)$. In this context, we see that any ($\R^\omega$-embeddable) tracial von Neumann algebra embeds into an existentially closed ($\R^\omega$-embeddable) tracial von Neumann algebra.  Recall from the introduction that e.c. tracial von Neumann algebras are McDuff II$_1$ factors; the same proof also shows that e.c. models of $\Th_\forall(\R)$ are McDuff II$_1$ factors.

\begin{fact}\label{NPS}{\cite{NPS}}
There is a family $(M_\alpha)_{\alpha<2^{\aleph_0}}$ of $\R^\omega$-embeddable II$_1$ factors such that, for any II$_1$ factor $M$, at most countably many of the $M_\alpha$'s embed into $M$.
\end{fact}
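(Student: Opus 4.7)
The plan is to use rigidity theory for II$_1$ factors arising from groups with strong structural properties ($w$-rigidity in the sense of Popa, or full property (T)), following the strategy of the NPS reference.

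For the construction, one needs $2^{\aleph_0}$ pairwise non-isomorphic countable ICC groups $\Gamma_\alpha$ that are both residually finite (ensuring hyperlinearity, hence $\R^\omega$-embeddability of the group von Neumann algebra) and carry a rigidity property. Such families can be produced by starting with a fixed residually finite $w$-rigid group $\Gamma_0$ and parametrizing various twists by $2$-cocycles or central extensions indexed by subsets of $\omega$; the pairwise non-isomorphism is controlled by a rigidity invariant of the twisting data. Setting $M_\alpha := L(\Gamma_\alpha)$ (or the cocycle-twisted version) gives the desired $\R^\omega$-embeddable II$_1$ factors.

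To bound embeddings into a fixed II$_1$ factor $M$, suppose toward a contradiction that uncountably many $M_\alpha$ embed into $M$. Passing to an uncountable subfamily, arrange that all $\Gamma_\alpha$ are generated by tuples of uniform size $n$ admitting a common Kazhdan constant $\epsilon > 0$. Each embedding $M_\alpha \hookrightarrow M$ then records an $n$-tuple of unitaries in $M$ satisfying the relations of $\Gamma_\alpha$. Property (T) supplies a rigidity dichotomy: two such embeddings (of $M_\alpha$ and $M_\beta$) whose generator-tuples lie within $\|\cdot\|_2$-distance $\epsilon$ after unitary conjugation in $M$ must generate approximately-unitarily-equivalent subfactors, which through $w$-rigidity forces $L(\Gamma_\alpha) \simeq L(\Gamma_\beta)$ and ultimately $\Gamma_\alpha \simeq \Gamma_\beta$, contradicting the chosen non-isomorphism. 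The contradiction is completed by a cardinality step: any uncountable collection of $n$-tuples in a II$_1$ factor must accumulate once one restricts attention to a separable subalgebra containing them — obtained, for instance, via a downward L\"owenheim-Skolem argument producing a separable elementary substructure of $M$ into which the tuples can be absorbed.

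The main obstacle is the rigidity step, specifically the deduction that an isomorphism (or unitary equivalence) of the subfactors $L(\Gamma_\alpha)$ and $L(\Gamma_\beta)$ inside $M$ forces $\Gamma_\alpha \simeq \Gamma_\beta$. The Kazhdan constant by itself handles embeddings with a \emph{fixed} source group; crossing between distinct source groups is the $W^*$-superrigidity-style input and is the technical heart of the construction in the NPS reference.
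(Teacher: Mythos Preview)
The paper does not give a proof of this statement; it is recorded as a Fact with a citation to \cite{NPS} and used as a black box. So there is no in-paper argument to compare against beyond the reference.

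Your sketch is in the right neighborhood but diverges from the actual NPS construction in a way that creates a real gap. In \cite{NPS} the family is built from a \emph{single} $w$-rigid ICC group $\Gamma$ (e.g.\ $\mathbb{Z}^2\rtimes \mathrm{SL}(2,\mathbb{Z})$) together with a continuum of scalar $2$-cocycles $\sigma_\alpha\in Z^2(\Gamma,\mathbb{T})$, setting $M_\alpha=L_{\sigma_\alpha}(\Gamma)$. Fixing $\Gamma$ is the whole point: the rigidity input then says that when $L_\sigma(\Gamma)$ embeds into a separable II$_1$ factor $M$, the cohomology class $[\sigma]$ is essentially recoverable from data living in $M$, and a separable $M$ can support only countably many such classes. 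Your version, with \emph{varying} groups $\Gamma_\alpha$, would require a $W^*$-superrigidity statement letting you distinguish $L(\Gamma_\alpha)$ from $L(\Gamma_\beta)$ as abstract factors from an embedding into $M$; that is a far stronger input than what NPS uses, and your Kazhdan-constant accumulation argument does not supply it.

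Your counting step also does not close. You cannot absorb uncountably many $n$-tuples into a single separable elementary substructure of $M$: downward L\"owenheim--Skolem produces a separable substructure containing any prescribed \emph{countable} set, not an uncountable one. In the application in this paper (and in \cite{NPS}) the ambient $M$ is already separable, so no such absorption step is needed; the genuine argument works on the cohomology side rather than through metric accumulation of generating tuples.
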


Consequently, we have:

\begin{cor}
There are $2^{\aleph_0}$ many nonisomorphic existentially closed ($\R^\omega$-embeddable) tracial von Neumann algebras.
\end{cor}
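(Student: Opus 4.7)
The plan is to combine Fact \ref{NPS} with the fact, recorded just before the statement, that every ($\R^\omega$-embeddable) tracial von Neumann algebra embeds into an existentially closed ($\R^\omega$-embeddable) tracial von Neumann algebra. Let me handle both versions (with and without the $\R^\omega$-embeddability restriction) in parallel: in either case the target class is inductive and contains each $M_\alpha$.

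First, for each $\alpha<2^{\aleph_0}$, I would fix an embedding of the II$_1$ factor $M_\alpha$ from Fact \ref{NPS} into some $N_\alpha$ that is existentially closed in the desired class. (If we want $\R^\omega$-embeddability, this uses that $\Th_\forall(\R)$ is universally axiomatizable and that every model of it is contained in an e.c. model; if we only want e.c.\ tracial von Neumann algebras, the same argument applies to $T_{\vna}$.) By the observation recalled from the introduction, each $N_\alpha$ is in particular a II$_1$ factor.

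Next I would argue by contradiction. Suppose the family $(N_\alpha)_{\alpha<2^{\aleph_0}}$ contains only countably many isomorphism classes. Then by pigeonhole there is a single II$_1$ factor $N$ and an uncountable (indeed, size-$2^{\aleph_0}$) set $S\subseteq 2^{\aleph_0}$ such that $N_\alpha\cong N$ for every $\alpha\in S$. For each $\alpha\in S$, the embedding $M_\alpha\hookrightarrow N_\alpha\cong N$ exhibits $M_\alpha$ as a subfactor of $N$. This gives uncountably many $M_\alpha$ embedding into the single II$_1$ factor $N$, directly contradicting Fact \ref{NPS}.

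The argument is essentially a pigeonhole packaging, so there is no real obstacle; the only thing to be careful about is the interpretation of the parenthetical ``($\R^\omega$-embeddable)'' in the statement, which is resolved by noting that the inductive-class machinery applies identically to $T_{\vna}$ and to $\Th_\forall(\R)$ (and that e.c.\ models in either class are automatically II$_1$ factors), so the same proof yields the conclusion in both readings simultaneously.
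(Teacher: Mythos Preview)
Your approach is correct and is exactly the argument the paper intends (the paper itself writes only ``Consequently, we have:'' and gives no further proof). One small imprecision worth tightening: by assuming for contradiction that there are only \emph{countably} many isomorphism classes among the $N_\alpha$, your argument as written literally yields only \emph{uncountably} many classes, not $2^{\aleph_0}$ many. The fix is immediate and uses the same idea: by Fact~\ref{NPS}, for any fixed II$_1$ factor $N$ the set $\{\alpha: M_\alpha \text{ embeds into } N\}$ is countable, so each fiber of the map $\alpha\mapsto [N_\alpha]$ is countable, and hence its image has cardinality $2^{\aleph_0}$.
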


If we knew that $\Th(\R)$ was inductive, then it would follow that there are continuum many nonisomorphic e.c. models of $\Th(\R)$.  Nevertheless, we will be able to derive this conclusion from our work on infinitely generic structures in Section \ref{infgenericsection}.

\section{Inductive Theories of II$_1$ factors}\label{inductive}

Throughout this paper, $\u$ denotes a nonprincipal ultrafilter on some index set; if $M$ is a tracial von Neumann algebra, then $M^\u$ denotes the corresponding (tracial) ultrapower of $M$.  We frequently make use of the fact that every embedding $\R\to \R^\u$ is elementary (as every such embedding is unitarily conjugate to the diagonal embedding; this is an easy direction of the main result of 
 \cite{Jung}).
 
 \emph{From now on, we assume that all sentences under consideration are nonnegative.}  Such an assumption poses no loss of generality when studying existentially closed models (by just adding a suitable real to a formula if necessary) and is required when studying questions of universal and $\forall\exists$-axiomatizability.

The following observation is crucial.  This observation was also independently made by C. Ward Henson and the fourth named author.

\begin{lemma}\label{Rec}
$\R$ is an e.c. model of $\Th_\forall(\R)$.
\end{lemma}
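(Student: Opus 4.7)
Let $N \supseteq \R$ be a model of $\Th_\forall(\R)$, let $\varphi(x,y)$ be a quantifier-free formula, and fix a tuple $a$ from $\R$. I need to show
\[
\inf_{c\in\R}\varphi(c,a)^\R = \inf_{b\in N}\varphi(b,a)^N.
\]
Since $\R\subseteq N$, the inequality ``$\geq$'' is immediate: embeddings preserve quantifier-free formulas, so every value $\varphi(c,a)^\R$ with $c\in\R$ also appears on the right. The substance of the lemma is the reverse inequality.

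The key idea is to route everything through $\R^\u$ using the paper's standing fact that every embedding $\R\to\R^\u$ is elementary. Because $N\models\Th_\forall(\R)$, the characterization recalled just before the lemma gives an embedding $j\colon N\to\R^\u$. Composing with the inclusion $\iota\colon\R\hookrightarrow N$ yields an embedding $j\circ\iota\colon\R\to\R^\u$, which by the cited fact is elementary. In particular, applying elementarity to the formula $\inf_x\varphi(x,a)$ gives
\[
\inf_{c\in\R}\varphi(c,a)^\R \;=\; \inf_{d\in\R^\u}\varphi\bigl(d,(j\circ\iota)(a)\bigr)^{\R^\u}.
\]

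To finish, I use that $j$ is itself an embedding and therefore preserves the quantifier-free formula $\varphi$: for every $b\in N$, $\varphi(b,a)^N = \varphi(j(b),j(a))^{\R^\u}$. Taking infima, and noting that $\{j(b):b\in N\}\subseteq \R^\u$,
\[
\inf_{b\in N}\varphi(b,a)^N \;\geq\; \inf_{d\in\R^\u}\varphi\bigl(d,j(a)\bigr)^{\R^\u} \;=\; \inf_{c\in\R}\varphi(c,a)^\R,
\]
where the final equality is the elementarity computation above (noting $j(a) = (j\circ\iota)(a)$). This is the desired inequality, completing the proof.

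\textbf{Main obstacle.} There is really no obstacle beyond correctly assembling the ingredients: the universal characterization of $\Th_\forall(\R)$-models as subalgebras of ultrapowers of $\R$, and the automatic elementarity of embeddings $\R\to\R^\u$. The only small subtlety is being careful that $\varphi$ is \emph{quantifier-free} so that plain embeddings (not just elementary ones) are enough to relate $\varphi^N$ and $\varphi^{\R^\u}$, while elementarity is only invoked for the single step that compares $\R$ to $\R^\u$.
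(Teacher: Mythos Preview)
Your proof is correct and follows essentially the same approach as the paper's: embed $N$ into $\R^\u$, use that the restriction to $\R$ is elementary, and sandwich $(\inf_x\varphi(x,a))^N$ between $(\inf_x\varphi(x,a))^\R$ and $(\inf_x\varphi(x,a))^{\R^\u}$. The paper compresses the argument into a single double inequality, but the ingredients and logic are identical.
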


\begin{proof}
Suppose $\varphi(x,y)$ is quantifier-free, $a\in \R$ and $\R\subseteq M$.  Fix an embedding $f:M \to\R^\u$ of $M$ into an ultrapower of $\R$.  Note then that $(\inf_x\varphi(x,a))^\R\geq (\inf_x\varphi(x,a))^M\geq (\inf_x(\varphi(x,fa))^{\R^\u}$.  Since $f|\R$ is elementary, the ends of the double inequality are equal, whence $(\inf_x\varphi(x,a))^\R=(\inf_x\varphi(x,a))^M$.
\end{proof}

In Proposition \ref{Rgeneric}, we will see that $\R$ is even more generic than just being existentially closed.

\begin{cor}\label{eccep}
$\R$ is an e.c. model of $T_{\vna}$ if and only if CEP has a positive solution.
\end{cor}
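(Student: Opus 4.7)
The plan is to convert CEP into the model-theoretic statement ``every tracial von Neumann algebra is a model of $\Th_\forall(\R)$''. This reformulation is legitimate because any tracial vNa embeds into a II$_1$ factor (for instance by tensoring with $\R$), so CEP is equivalent to the equality $\operatorname{Mod}(T_{\vna})=\operatorname{Mod}(\Th_\forall(\R))$. Each direction of the corollary then amounts to comparing ``e.c.\ for $T_{\vna}$'' with ``e.c.\ for $\Th_\forall(\R)$''.

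For the ``if'' direction, assume CEP. Then $\operatorname{Mod}(T_{\vna})$ and $\operatorname{Mod}(\Th_\forall(\R))$ coincide as classes of tracial vNas, so being e.c.\ in one is the same as being e.c.\ in the other, because both conditions quantify over the same set of superstructures. Lemma~\ref{Rec} tells us $\R$ is e.c.\ for $\Th_\forall(\R)$, so $\R$ is e.c.\ for $T_{\vna}$.

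For the converse, suppose $\R$ is e.c.\ for $T_{\vna}$ and let $M$ be an arbitrary tracial vNa. The plan is to form the tracial free product $N := \R * M$, which is a tracial vNa containing both $\R$ and $M$. Since $\R \subseteq N$ and $\R$ is e.c.\ for $T_{\vna}$, the inclusion $\R \hookrightarrow N$ is an existential embedding. I would then invoke the standard model-theoretic fact that any existential embedding $A \hookrightarrow B$ extends over $A$ to an embedding $B \hookrightarrow A^{\u}$ for some ultrafilter $\u$. Applied here this yields $N \hookrightarrow \R^{\u}$, and restricting to $M \subseteq N$ shows that $M$ is $\R^\omega$-embeddable. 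Since $M$ was arbitrary, CEP holds.

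The main obstacle is the fact about extending existential embeddings invoked in the previous paragraph, which I would prove by a compactness argument. Given a finite tuple $\bar b$ from $B$ and finitely many quantifier-free conditions $\varphi_i(\bar x,\bar a_i)$ with parameters $\bar a_i \in A$ satisfied to small error by $\bar b$ in $B$, I would bundle them into a single nonnegative quantifier-free formula $\psi(\bar x,\bar a)$ with $\psi(\bar b,\bar a)^B$ small. Existentialness of the inclusion forces $\inf_{\bar x}\psi(\bar x,\bar a)^A = \inf_{\bar x}\psi(\bar x,\bar a)^B$, yielding an approximate witness in $A$. Collecting these approximations across all finite data and passing to an appropriate ultrafilter produces tuples in $A^{\u}$ realizing the full quantifier-free type of each finite tuple from $B$ over $A$, and these assignments assemble into the desired embedding $B \hookrightarrow A^{\u}$ fixing $A$ pointwise.
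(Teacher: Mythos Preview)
Your proof is correct, and the ``if'' direction matches the paper's exactly. For the converse, however, the paper takes a more direct route that avoids both the free product construction and the fact that existential embeddings lift to ultrapower embeddings. The paper simply observes that if $M$ is a II$_1$ factor then $\R$ already embeds in $M$, and then argues on the level of sentences: given a universal condition $\sigma=0$ in $\Th_\forall(\R)$ (with $\sigma$ bounded by $1$, say), the sentence $\tau:=\max(1-\sigma,0)$ is equivalent to an $\inf$-sentence, so the e.c.\ hypothesis gives $\tau^\R=\tau^M$ and hence $\sigma^M=\sigma^\R=0$. Thus $M\models\Th_\forall(\R)$ directly, and one concludes via the equivalence ``$M\models\Th_\forall(\R)$ iff $M$ embeds in $\R^\u$'' rather than by constructing the embedding by hand.

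Your approach has the virtue of handling an arbitrary tracial von Neumann algebra $M$ in one stroke (via $\R*M$), whereas the paper restricts to II$_1$ factors $M$ so that $\R\subseteq M$ is automatic; but since CEP is a statement about II$_1$ factors this restriction costs nothing. Conversely, the paper's argument is lighter on machinery: no free products, no compactness argument to build the ultrapower embedding. You could in fact streamline your own proof by dropping the free product and applying your existential-embedding-into-ultrapower fact directly to $\R\hookrightarrow M$ when $M$ is a II$_1$ factor.
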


\begin{proof}
The ``if'' direction follows from Lemma \ref{Rec}.  For the converse, suppose that $M$ is a II$_1$ factor and that $\sigma=0$ belongs to $\Th_\forall(\R)$.  Without loss of generality, suppose that $\sigma$ has value bounded by $1$ in all structures.  Since $\tau:=\max(1-\sigma,0)$ is (equivalent to) a sentence of the form $\inf_x \varphi(x)$, if $\R$ were an e.c. model of $T_{\vna}$, we would have that $\tau^\R=\tau^M$, whence $\sigma^\R=\sigma^M$ and $M\models \Th_\forall(\R)$.
\end{proof}

We now turn to $\forall\exists$-theories of II$_1$ factors.

\begin{lemma}\label{standard}
Suppose that $M\models \Th_\forall(\R)$.
\begin{enumerate}
\item If $M$ is a II$_1$ factor (or, more generally, contains a copy of $\R$ as a substructure), then $\Th_{\forall\exists}(M)\subseteq \Th_{\forall\exists}(\R)$.
\item If $M$ is an e.c. model of $\Th_\forall(\R)$, then $\Th_{\forall \exists}(M)\supseteq\Th_{\forall \exists}(\R)$.
\end{enumerate}
Consequently, if $M$ is an e.c. model of $\Th_\forall(\R)$, then $\Th_{\forall\exists}(M)=\Th_{\forall\exists}(\R)$.
\end{lemma}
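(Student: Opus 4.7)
The plan is to leverage two ingredients from the preceding material: every embedding $\R \to \R^\u$ is elementary (so $\R \preceq \R^\u$), and any $M \models \Th_\forall(\R)$ admits an embedding into $\R^\u$. Throughout, let $\sigma = \sup_{\vec x}\inf_{\vec y}\varphi(\vec x,\vec y)$ be a nonnegative $\forall\exists$-sentence with $\varphi$ quantifier-free.

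For (1), assume $\sigma^M = 0$ and $\R \subseteq M$. Pick any embedding $f \colon M \to \R^\u$; identifying $M$ with $f(M)$ produces a chain $\R \subseteq M \subseteq \R^\u$ whose outer inclusion is elementary. Fix $\vec a \in \R$. Since enlarging a structure can only shrink an infimum,
\[
(\inf_{\vec y}\varphi(\vec a,\vec y))^{\R^\u} \;\leq\; (\inf_{\vec y}\varphi(\vec a,\vec y))^{M} \;\leq\; \sigma^M = 0.
\]
Elementarity makes the leftmost quantity equal to $(\inf_{\vec y}\varphi(\vec a,\vec y))^\R$, and nonnegativity forces all three values to be $0$. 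Taking the supremum over $\vec a \in \R$ yields $\sigma^\R = 0$.

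For (2), assume $\sigma^\R = 0$, let $M$ be an e.c.\ model of $\Th_\forall(\R)$, and fix $\vec a \in M$. Choose an embedding $f \colon M \to \R^\u$. Since $\R^\u \models \Th_\forall(\R)$ and $M$ is e.c.\ for this class, the definition of existentially closed applied to the inclusion $M \hookrightarrow \R^\u$ via $f$ gives
\[
(\inf_{\vec y}\varphi(\vec a,\vec y))^{M} = (\inf_{\vec y}\varphi(f(\vec a),\vec y))^{\R^\u}.
\]
The right-hand side is bounded above by $\sigma^{\R^\u}$, which equals $\sigma^\R = 0$ because $\R \preceq \R^\u$. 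Nonnegativity then forces $(\inf_{\vec y}\varphi(\vec a,\vec y))^M = 0$, and taking the supremum over $\vec a \in M$ gives $\sigma^M = 0$.

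The final ``consequently'' clause falls out by combining (1) and (2), using the fact recalled earlier that every e.c.\ model of $\Th_\forall(\R)$ is a II$_1$ factor and hence contains a copy of $\R$. There is no serious obstacle: both halves reduce to chasing infima along the chain $\R \preceq \R^\u$ once an embedding of $M$ into $\R^\u$ is fixed.
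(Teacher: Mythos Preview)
Your proof is correct and follows essentially the same route as the paper: for (1) you sandwich $M$ in a chain $\R\subseteq M\subseteq \R^\u$ with elementary composite, and for (2) you use the e.c.\ property of $M$ against an embedding into $\R^\u$ together with $\R\preceq\R^\u$. One cosmetic point: nonnegativity of $\sigma$ does not literally force each individual value $(\inf_{\vec y}\varphi(\vec a,\vec y))$ to be $0$, only $\leq 0$; the equality with $0$ is justified after taking the supremum over $\vec a$---but this is harmless and the paper's own proof glosses over the same point.
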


\begin{proof}
(1)  Suppose that $\sigma=0$ belongs to $\Th_{\forall\exists}(M)$; write $\sigma=\sup_x\inf_y\varphi(x,y)$.  Fix $a\in \R$.  We have embeddings $i:\R\to M$ and $j:M\to \R^\u$.  Since $(\inf_y \varphi(i(a),y))^M=0$, we have $(\inf_y \varphi(j(i(a)),y))^{\R^\u}=0$.  Since $j\circ i$ is elementary, we have $(\inf_y \varphi(a,y))^\R=0$.  Since $a\in \R$ was arbitrary, we have $\sigma^\R=0$.  

(2) is standard (and holds in complete generality) but we include a proof for the sake of completeness. Suppose $\sigma=\sup_x\inf_y\varphi(x,y)$ and $\sigma^\R=0$.  Since $M$ embeds into an ultrapower $\R^\u$ of $\R$, given $a\in M$, we know that $(\inf_y\varphi(a,y))^M=(\inf_y\varphi(a,y))^{\R^\u}=0$, whence $\sigma^M=0$.

The last statement of the lemma follows from the fact (discussed above) that e.c. models of $\Th_\forall(\R)$ are II$_1$ factors.
\end{proof}

%

\begin{cor}
If there exists a \emph{complete} $\forall \exists$-axiomatizable theory $T'$ of $\R^\omega$-embeddable II$_1$ factors, then $T'=\Th(\R)$.  
\end{cor}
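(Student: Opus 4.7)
The plan is to take an arbitrary model $M$ of $T'$, leverage Lemma~\ref{standard}(1) to squeeze $T'_{\forall\exists}$ inside $\Th_{\forall\exists}(\R)$, and then use $\forall\exists$-axiomatizability together with completeness of $T'$ to conclude that $\R$ itself is a model of $T'$, so $T'=\Th(\R)$.

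First I would fix any $M\models T'$. By hypothesis on $T'$, $M$ is an $\R^\omega$-embeddable II$_1$ factor, so in particular $M\models\Th_\forall(\R)$. Lemma~\ref{standard}(1) then gives $\Th_{\forall\exists}(M)\subseteq\Th_{\forall\exists}(\R)$. Since $T'$ is complete, $\Th_{\forall\exists}(M)$ is exactly $T'_{\forall\exists}$, the set of $\forall\exists$-conditions in $T'$, so the inclusion reads $T'_{\forall\exists}\subseteq\Th_{\forall\exists}(\R)$.

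Next I would invoke the standard fact that a $\forall\exists$-axiomatizable theory is axiomatized by its own set of $\forall\exists$-consequences: given any $\forall\exists$-axiomatization $S$ of $T'$, we have $S\subseteq T'_{\forall\exists}$, and hence $\operatorname{Mod}(T'_{\forall\exists})\subseteq\operatorname{Mod}(S)=\operatorname{Mod}(T')$. Combined with the previous step, this gives $\R\models T'_{\forall\exists}$, hence $\R\models T'$. Completeness of $T'$ now forces $T'=\Th(\R)$.

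There isn't really a hard step here once Lemma~\ref{standard} is in hand; the whole argument is essentially bookkeeping around the definitions of completeness and $\forall\exists$-axiomatizability. The only subtlety worth double-checking is that Lemma~\ref{standard}(1) genuinely applies, which only requires $M$ to be a II$_1$ factor (or more generally to contain a copy of $\R$ as a substructure), and this is already built into the hypothesis that the models of $T'$ are $\R^\omega$-embeddable II$_1$ factors.
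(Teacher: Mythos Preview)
Your proof is correct and follows essentially the same approach as the paper's: both take a model $M$ of $T'$, apply Lemma~\ref{standard}(1) to get $\Th_{\forall\exists}(M)\subseteq\Th_{\forall\exists}(\R)$, and then use $\forall\exists$-axiomatizability and completeness of $T'$ to conclude $\R\models T'$ and hence $T'=\Th(\R)$. The paper's version is simply more terse, compressing your middle steps into a single invocation of the lemma.
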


\begin{proof}
Suppose that $M\models \Th_\forall(\R)$ is a II$_1$ factor such that $T':=\Th(M)$ is $\forall\exists$-axiomatizable.  Then by (1) of the previous lemma, $\R\models T'$, whence $T'=\Th(\R)$.
\end{proof}
%

The previous corollary shows that if $M$ is an $\R^\omega$-embeddable II$_1$ factor that is not elementarily equivalent to $\R$, then $\Th(M)$ is not $\forall\exists$-axiomatizable.  In particular, if $M$ is not McDuff (e.g. $M$ is a free group factor or ultraproduct of matrix algebras), then $\Th(M)$ is not $\forall\exists$-axiomatizable.

\begin{question}
Is $\Th(\R)$ $\forall\exists$-axiomatizable?
\end{question}

The previous question has a purely operator-algebraic reformulation in light of the model-theoretic fact that a theory in a countable language is $\forall\exists$-axiomatizable if and only if it is closed under unions of chains of countable models:  if we have a chain 
$$\R_0\subseteq \R_1\subseteq \R_2\subseteq \cdots$$ such that $\R_i^\u\cong \R^\u$ for each $i$, then setting $\R_\infty:=\overline{\bigcup_i \R_i}$, do we have $\R_\infty^\u \cong \R^\u$?

We should remark that in connection with the question of axiomatizability, we do know that $\Th(\R)$ is \emph{not} $\exists\forall$-axiomatizable.  Indeed, let $i:\R\to L(\mathbb F_2)$ and $j:L(\mathbb F_2)\to \R^\u$ be embeddings.  Consider $\sigma:=\inf_x \sup_y\varphi(x,y)$ such that $\sigma^\R=0$.  Fix $\epsilon>0$ and choose $a\in \R$ such that $\sup_y\varphi(a,y)^\R<\epsilon$.  Since $j\circ i$ is elementary, we have $$\sup_y\varphi(i(a),y)^{L(\mathbb F_2)}\leq \sup_y\varphi(j(i(a)),y)^{\R^\u}<\epsilon.$$  It follows that $\sigma^{L(\mathbb F_2)}=0$.  Thus, if $\Th(\R)$ were $\exists\forall$-axiomatizable, we would have that $L(\mathbb F_2)\equiv \R$, a contradiction.

For any theory $T$, recall that $\E_T$ denote the existentially closed models of $T_\forall$.

\begin{cor}
If $T=\Th(\R)$, then $\Th_{\forall\exists}(\E_T)=\Th_{\forall\exists}(\R)$.
\end{cor}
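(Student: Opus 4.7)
The plan is to observe that this corollary is essentially immediate from Lemma \ref{Rec} and Lemma \ref{standard}, so the proof amounts to chasing the definitions and applying each inclusion in the right direction. Let me sketch the two containments.

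First I would establish $\Th_{\forall\exists}(\E_T) \subseteq \Th_{\forall\exists}(\R)$. Since $T = \Th(\R)$ is complete, we have $T_\forall = \Th_\forall(\R)$, so $\E_T$ is precisely the class of e.c. models of $\Th_\forall(\R)$. By Lemma \ref{Rec}, $\R$ itself lies in this class, i.e.\ $\R \in \E_T$. Thus any $\forall\exists$-sentence that evaluates to $0$ in every member of $\E_T$ in particular evaluates to $0$ in $\R$, giving the first containment.

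For the reverse containment $\Th_{\forall\exists}(\R) \subseteq \Th_{\forall\exists}(\E_T)$, fix $M \in \E_T$. Then $M$ is an e.c. model of $\Th_\forall(\R)$, so part (2) of Lemma \ref{standard} gives $\Th_{\forall\exists}(\R) \subseteq \Th_{\forall\exists}(M)$. Since this holds for every $M \in \E_T$, intersecting over $M$ yields $\Th_{\forall\exists}(\R) \subseteq \Th_{\forall\exists}(\E_T)$.

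There is no real obstacle here; the content was already absorbed into the two earlier lemmas (in particular, Lemma \ref{Rec} is what guarantees $\E_T \neq \emptyset$ and in fact contains $\R$, and Lemma \ref{standard}(2) is what propagates $\forall\exists$-consequences of $\R$ to every e.c. model of its universal theory). The only thing to watch in writing it up is keeping clear that $\Th_{\forall\exists}(\E_T)$ denotes $\bigcap_{M \in \E_T} \Th_{\forall\exists}(M)$, so that the two inclusions are each single-line applications of the preceding results.
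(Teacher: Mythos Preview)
Your proof is correct. For the containment $\Th_{\forall\exists}(\R)\subseteq \Th_{\forall\exists}(\E_T)$ you and the paper do exactly the same thing, invoking Lemma~\ref{standard}(2). For the reverse containment you take a slightly different route: you use Lemma~\ref{Rec} to note that $\R$ itself lies in $\E_T$, so the intersection $\Th_{\forall\exists}(\E_T)$ is trivially contained in $\Th_{\forall\exists}(\R)$. The paper instead picks any $M\in\E_T$, observes that $M$ is a II$_1$ factor, and then applies Lemma~\ref{standard}(1) to get $\Th_{\forall\exists}(M)\subseteq \Th_{\forall\exists}(\R)$. Your argument is a touch more direct; the paper's version has the minor advantage that it does not need to invoke Lemma~\ref{Rec} at all (Lemma~\ref{standard}(1) already routes everything through an embedding into $\R^\u$). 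Either way the content is the same and the write-up is fine.
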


\begin{proof}
Lemma \ref{standard}(2) shows that $\Th_{\forall\exists}(\R)\subseteq \Th_{\forall\exists}(\E_T)$.  Conversely, suppose that $\sigma=0$ belongs to $\Th_{\forall\exists}(\E_T)$.  Since e.c. models of $T$ are II$_1$ factors, we have, by Lemma \ref{standard}(1), that $\sigma^\R=0$.
\end{proof}

\begin{rmk}
If $\Th(\R)$ is $\forall\exists$-axiomatizable, then $\Th(\E_T)=\Th(\R)$.  This would be in contrast to the theory of groups, where there are non-elementarily equivalent e.c. groups. 
\end{rmk}

At this point, it makes sense to introduce companion operators and the Kaiser hull into continuous logic.

\begin{df}
Suppose that we have a mapping $T\mapsto T^*$ on theories.  We say that the mapping is a \emph{companion operator} if, for all theories $T$ and $T'$, we have:
\begin{enumerate}
\item $(T^*)_\forall=T_\forall$
\item $T_\forall=T'_\forall\Rightarrow T^*=(T')^*$.
\item $T_{\forall\exists}\subseteq T^*$.
\end{enumerate}
\end{df}

In what follows, we will see some examples of companion operators.  It is clear that if $T$ has a model companion $T'$, then $T'$ satisfies the conditions of the previous definition. The notion of a companion operator was an attempt to extend the notion of a model companion to an operation that is defined for all theories.
 
\begin{lemma}
Suppose that $T^*$ is a companion of $T$ and $T'$ is an inductive theory such $(T')_\forall=T_\forall$.  Then $T'\subseteq T^*$.
\end{lemma}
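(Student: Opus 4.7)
The plan is to observe that the hypothesis $(T')_\forall = T_\forall$ together with property (2) of a companion operator immediately reduces the problem to showing $T' \subseteq (T')^*$. In other words, it is enough to prove the lemma in the special case where we are comparing an inductive theory with its own companion.

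Once we are in that setting, the strategy is:

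\textbf{Step 1.} Apply property (2) to replace $T^*$ by $(T')^*$. This is clean since $(T')_\forall = T_\forall$ is exactly the hypothesis of (2).

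\textbf{Step 2.} Invoke property (3) applied to $T'$ itself: $(T')_{\forall\exists} \subseteq (T')^*$. This puts the $\forall\exists$-consequences of $T'$ inside $T^*$.

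\textbf{Step 3.} Use the assumption that $T'$ is inductive. By the result of \cite{U} cited in the introduction, an inductive axiomatizable theory is $\forall\exists$-axiomatizable, so the models of $T'$ coincide with the models of $(T')_{\forall\exists}$. Equivalently, every condition $\sigma = 0$ in $T'$ is logically implied by $(T')_{\forall\exists}$.

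\textbf{Step 4.} Finally, appeal to the paper's convention that an $\la$-theory is by definition closed under logical implication. Since $(T')_{\forall\exists} \subseteq T^*$ by Steps 1 and 2, and $T^*$ is closed under logical implication, any condition logically implied by $(T')_{\forall\exists}$ is already in $T^*$. By Step 3, this covers all of $T'$, giving $T' \subseteq T^*$.

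I do not expect a genuine obstacle here; the entire argument is a direct unwinding of the three defining properties of a companion operator, combined with the characterization of inductive theories as $\forall\exists$-axiomatizable ones. The only minor point to be careful about is invoking the closure of theories under logical implication when passing from containment of $(T')_{\forall\exists}$ in $T^*$ to containment of the full theory $T'$.
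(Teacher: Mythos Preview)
Your proof is correct and follows essentially the same route as the paper's. The paper's one-line argument reads ``We have $T^*=(T')^*\supseteq (T')_{\forall\exists}$; since $T'$ is inductive, it follows that $T'\subseteq T^*$,'' which is exactly your Steps 1--4 compressed: property (2) gives $T^*=(T')^*$, property (3) gives $(T')_{\forall\exists}\subseteq (T')^*$, and the inductivity of $T'$ together with closure of theories under logical implication finishes it.
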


\begin{proof}
We have $T^*=(T')^*\supseteq (T')_{\forall\exists}$; since $T'$ is inductive, it follows that $T'\subseteq T^*$.
\end{proof}

\begin{lemma}
If $T_1$ and $T_2$ are both inductive theories with the same universal theory as $T$, then so is $T_1\cup T_2$.
\end{lemma}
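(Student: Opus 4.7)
The plan is to verify the two things needed to conclude that $T_1 \cup T_2$ is inductive with the same universal theory as $T$: closure of $\operatorname{Mod}(T_1 \cup T_2)$ under chains, and the equality $(T_1 \cup T_2)_\forall = T_\forall$.

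Closure under chains is immediate from the hypothesis. If $(M_i)$ is a chain of models of $T_1 \cup T_2$, then for each $j \in \{1,2\}$ the chain consists of models of $T_j$, so by inductiveness of $T_j$ the (metric closure of the) union $\overline{\bigcup_i M_i}$ models $T_j$; hence it models $T_1 \cup T_2$.

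For the equality of universal theories, the inclusion $(T_1 \cup T_2)_\forall \supseteq T_\forall$ is clear because $T_\forall = (T_1)_\forall \subseteq T_1 \subseteq T_1 \cup T_2$. The reverse inclusion is equivalent, by the characterization recalled in the excerpt, to the statement that every model of $T_\forall$ embeds into a model of $T_1 \cup T_2$. I would prove this by a zig-zag: starting from $M \models T_\forall = (T_1)_\forall$, choose an embedding of $M$ into some $N_1 \models T_1$. Then $N_1 \models T_\forall = (T_2)_\forall$, so there is an embedding of $N_1$ into some $N_2 \models T_2$; iterate, alternating between $T_1$ and $T_2$. This produces a chain $M \subseteq N_1 \subseteq N_2 \subseteq \cdots$ whose odd-indexed members are models of $T_1$ and whose even-indexed members are models of $T_2$. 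Applying inductiveness of $T_1$ to the subchain $(N_{2k+1})_k$ and inductiveness of $T_2$ to the subchain $(N_{2k})_k$ (both of which cofinally exhaust the union), the structure $N_\omega := \overline{\bigcup_i N_i}$ models both $T_1$ and $T_2$, so $N_\omega \models T_1 \cup T_2$ and contains $M$.

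The only delicate point is ensuring that the zig-zag can be continued at every stage, and this works precisely because $T$, $T_1$, and $T_2$ share a common universal theory: at each step the intermediate structure lies in $\operatorname{Mod}((T_j)_\forall)$ for the theory $T_j$ we want to embed into next. Everything else is an application of the standard facts about inductive theories and universal consequences recorded in the introduction.
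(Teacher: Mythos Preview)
Your proof is correct and follows essentially the same approach as the paper: immediate inductiveness of $T_1\cup T_2$, then a zig-zag chain alternating between models of $T_1$ and $T_2$ to show that any model of $T_\forall$ embeds into a model of $T_1\cup T_2$. The paper is somewhat terser (it leaves the easy inclusion $T_\forall\subseteq (T_1\cup T_2)_\forall$ implicit), but the argument is the same.
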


\begin{proof}
$T_1\cup T_2$ is clearly inductive.  Suppose $\A\models T_\forall$.  We build a chain
$$\A\subseteq \m_0\subseteq \N_0\subseteq \m_1\subseteq \N_1\subseteq \cdots,$$ where each $\m_i\models T_1$ and $\N_i\models T_2$; we obtain the $\m_i$ and $\N_i$ by using the fact that each of $T_1$ and $T_2$ has the same universal theory as $T$.  Since both $T_1$ and $T_2$ are inductive, it follows that the union of the chain models both $T_1$ and $T_2$.
\end{proof}

Since there is an inductive theory with the same universal theory as $T$, namely $T_{\forall \exists}$, the previous two lemmas imply that there is a minimal companion operator obtained by taking the maximal inductive theory with the same universal theory as $T$, which is the union of all inductive theories with the same universal theory as $T$.  This companion is called the \emph{inductive} or \emph{Kaiser hull} of $T$, denoted $T^{\KH}$.

\begin{prop}

\
\begin{enumerate}
\item $\Th_{\forall\exists}(\E_T)\subseteq T_{\KH}$.
\item $T_{\KH}$ is axiomatized by $\Th_{\forall\exists}(\E_T)$.
\end{enumerate}
\end{prop}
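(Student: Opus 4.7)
My plan for (1) is to verify that $\Th_{\forall\exists}(\E_T)$ generates an inductive theory with universal theory equal to $T_\forall$; the maximality characterization of the Kaiser hull then forces the inclusion. Let $T^{*}$ denote the theory axiomatized by $\Th_{\forall\exists}(\E_T)$. Since $T^{*}$ is axiomatized by $\forall\exists$-conditions, $T^{*}$ is inductive. To verify $(T^{*})_\forall = T_\forall$, one direction is easy: any nonnegative universal sentence $\sigma$ with $\sigma=0\in T_\forall$ is a fortiori a $\forall\exists$-sentence that vanishes on every model of $T_\forall$, hence on every $M\in\E_T$, so $\sigma=0\in\Th_{\forall\exists}(\E_T)\subseteq T^{*}$. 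Conversely, suppose $\sigma=\sup_{\vec x}\varphi(\vec x)$ is a nonnegative universal sentence with $T^{*}\models\sigma=0$, and let $M\models T$; embedding $M$ into an e.c.\ model $M'$ of $T_\forall$ (possible because $T_\forall$ is inductive and every element of an inductive class sits inside an e.c.\ one), we have $M'\models T^{*}$, so $\sigma^{M'}=0$, and since universal sentences satisfy $\sigma^M\le\sigma^{M'}$ whenever $M\subseteq M'$, we conclude $\sigma^M=0$. Thus $(T^{*})_\forall=T_\forall$, and by the maximality of $T_{\KH}$ we get $T^{*}\subseteq T_{\KH}$, which yields (1).

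For (2), part (1) gives $\Th_{\forall\exists}(\E_T)\subseteq T_{\KH}$, so $\operatorname{Mod}(T_{\KH})\subseteq\operatorname{Mod}(\Th_{\forall\exists}(\E_T))$. Since $T_{\KH}$ is inductive and hence $\forall\exists$-axiomatizable, for the reverse inclusion it suffices to show $(T_{\KH})_{\forall\exists}\subseteq \Th_{\forall\exists}(\E_T)$, i.e., that every $M\in\E_T$ is a model of $T_{\KH}$. Fix such an $M$ and a $\forall\exists$-sentence $\sigma=\sup_{\vec x}\inf_{\vec y}\varphi(\vec x,\vec y)$ with $\sigma=0\in T_{\KH}$. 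Because $(T_{\KH})_\forall=T_\forall$, the characterization recalled earlier in the excerpt ($M\models T_\forall$ iff $M$ embeds into a model of $T_{\KH}$) supplies some $N\models T_{\KH}$ with $M\subseteq N$. In particular $N\models T_\forall$, so the existential closedness of $M$ in $N$ gives $\inf_{\vec y}\varphi(\vec a,\vec y)^M=\inf_{\vec y}\varphi(\vec a,\vec y)^N$ for every tuple $\vec a$ from $M$. Therefore
\[
\sigma^M \;=\; \sup_{\vec a\in M}\inf_{\vec y}\varphi(\vec a,\vec y)^N \;\le\; \sup_{\vec a\in N}\inf_{\vec y}\varphi(\vec a,\vec y)^N \;=\; \sigma^N \;=\; 0.
\]

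I do not foresee a deep obstacle; both parts are soft model-theoretic manipulations that rest on the maximality description of $T_{\KH}$ and on the existence of an ambient extension $N$ inside the Kaiser hull. The one step to handle with a little care is the passage from $N$ down to $M$ in the final display: a $\forall\exists$-sentence is not preserved under arbitrary substructure, but existential closedness keeps the inner $\inf_{\vec y}$ unchanged and forces the outer $\sup_{\vec x}$ to be at most its value in $N$. This is precisely the point at which \emph{existential} closedness (rather than mere membership in $\operatorname{Mod}(T_\forall)$) is used.
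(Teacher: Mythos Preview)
Your proposal is correct and follows essentially the same approach as the paper's proof, just with more detail. The paper condenses (1) into the single observation that every model of $T_\forall$ extends to an e.c.\ model (which is exactly what you unpack to verify $(T^*)_\forall=T_\forall$), and for (2) it gives precisely your argument that each $M\in\E_T$ embeds into a model of $T^{\KH}$ and inherits its $\forall\exists$-theory via existential closedness.
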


\begin{proof}
The first item follows from the fact that any model of $T_\forall$ can be extended to an e.c. model of $T_\forall$.  For the second item, it suffices to prove that every element $M$ of $\E_T$ models $T^{\KH}_{\forall\exists}$; however, this follows immediately from the fact that $M$ is e.c. and is contained in a model of $T^{\KH}$. 
\end{proof}

\begin{cor}
If $T=\Th(\R)$, then $T^{\KH}$ is axiomatized by $\Th_{\forall\exists}(\R)$.
\end{cor}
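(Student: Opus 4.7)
The plan is to observe that the corollary follows by assembling two results already in place: the general proposition immediately preceding, which shows that for any theory $T$ the Kaiser hull $T^{\KH}$ is axiomatized by $\Th_{\forall\exists}(\E_T)$; and the specialization to $T = \Th(\R)$ established in the corollary proved a few paragraphs earlier, namely $\Th_{\forall\exists}(\E_T) = \Th_{\forall\exists}(\R)$. Substituting the second equality into the axiomatization given by the first yields the statement, with no further computation required.

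If one wished to make this self-contained, the only substantive ingredient is the equality $\Th_{\forall\exists}(\E_T) = \Th_{\forall\exists}(\R)$, and I would recall how it was obtained. For the containment $\Th_{\forall\exists}(\R) \subseteq \Th_{\forall\exists}(M)$ when $M \in \E_T$, invoke Lemma \ref{standard}(2), whose hypothesis is satisfied since $M$ is by definition an existentially closed model of $\Th_\forall(\R)$. For the reverse containment, use Lemma \ref{standard}(1): every $M \in \E_T$ is a McDuff II$_1$ factor (as noted following the discussion of the Polish space $X$ of countable models), hence contains a copy of $\R$, so $\Th_{\forall\exists}(M) \subseteq \Th_{\forall\exists}(\R)$. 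Intersecting over all $M \in \E_T$ produces the equality of $\forall\exists$-theories.

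There is no real obstacle here; the content lies entirely in the earlier lemmas and the preceding proposition, and the corollary is simply the convenient reformulation of their combination that will be used downstream. The role of the statement is to isolate, for the theory $\Th(\R)$, an explicit axiomatization of the Kaiser hull in terms of a single structure rather than the whole class $\E_T$.
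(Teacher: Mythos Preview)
Your proposal is correct and matches the paper's intended argument exactly: the corollary is immediate from combining part (2) of the preceding proposition (that $T^{\KH}$ is axiomatized by $\Th_{\forall\exists}(\E_T)$) with the earlier corollary that $\Th_{\forall\exists}(\E_T)=\Th_{\forall\exists}(\R)$ when $T=\Th(\R)$. The paper in fact gives no proof at all for this corollary, treating it as an evident consequence of those two results, and your expansion of the ingredients is accurate.
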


\section{Automorphisms of e.c. II$_1$ factors}\label{auto}

Suppose that $M$ is a II$_1$ factor and $\alpha\in \Aut(M)$.  Recall that $\alpha$ is said to be \emph{approximately inner} if, for every finite set $\{x_1,\ldots,x_n\}\subseteq M$ and every $\epsilon>0$, there is $u\in U(M)$ such that $\max_{1\leq i\leq n}\|\alpha(x_i)-ux_iu^*\|_2<\epsilon$.  Let $\Inn(M)$ denote the group of inner automorphisms of $M$ and let $\AppInn(M)$ denote the group of approximately inner automorphisms of $M$.  Then $\AppInn(M)$ is the closure of $\Inn(M)$ in the point-strong topology on $\Aut(M)$.  It is a fact, independently due to Connes \cite{connes} and Sakai \cite{Sakai}, that $\AppInn(M)=\Inn(M)$ if and only if $M$ does \emph{not} have property $(\Gamma)$.
\begin{prop}\label{ecappinn}
Suppose that $M$ is an e.c. model of $T_{\vna}$ (so in particular a II$_1$ factor).  Then $\Inn(M)<\AppInn(M)=\Aut(M)$.
\end{prop}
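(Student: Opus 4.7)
The plan is to prove the two assertions separately: the existence of non-inner approximately inner automorphisms, and the fact that every automorphism is approximately inner.

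For the strict inclusion $\Inn(M)<\AppInn(M)$: the inclusion $\Inn(M)\subseteq\AppInn(M)$ is immediate from the definition. For strictness, recall that any e.c. model of $T_{\vna}$ is McDuff (noted in the introduction), and a McDuff II$_1$ factor has property $(\Gamma)$. By the Connes--Sakai theorem already cited, property $(\Gamma)$ is equivalent to $\Inn(M)\neq\AppInn(M)$, so we are done with this part.

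For the equality $\AppInn(M)=\Aut(M)$: the key idea is to realize an arbitrary $\alpha\in\Aut(M)$ as conjugation by a unitary inside a larger tracial von Neumann algebra, and then use that $M$ is existentially closed to pull the unitary back to $M$ approximately. Concretely, given $\alpha\in\Aut(M)$, form the crossed product $N:=M\rtimes_\alpha\Z$ with its canonical faithful normal trace. Then $N$ is a tracial von Neumann algebra, $M\subseteq N$ as tracial von Neumann algebras, and there is a unitary $u_\alpha\in N$ with $u_\alpha x u_\alpha^*=\alpha(x)$ for every $x\in M$.

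Now fix $x_1,\dots,x_n\in M$ and $\epsilon>0$, and consider the quantifier-free formula (with parameters $x_i$ and $\alpha(x_i)$ from $M$)
$$\psi(y):=\max_{1\le i\le n}\|\alpha(x_i)-yx_iy^*\|_2+\|y^*y-1\|_2+\|yy^*-1\|_2.$$
Taking $y=u_\alpha$ shows $\bigl(\inf_y \psi(y)\bigr)^N=0$, where $y$ ranges over the operator norm unit ball of $N$. Since $M$ is an e.c. model of $T_{\vna}$ and $M\subseteq N$, we get $\bigl(\inf_y \psi(y)\bigr)^M=0$. So for every $\delta>0$ there is $y\in M_1$ with $\psi(y)^M<\delta$.

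Finally, I need to upgrade such an approximate unitary to an actual unitary. This is a standard maneuver: from $\|y^*y-1\|_2,\|yy^*-1\|_2<\delta$ in the II$_1$ factor $M$, the polar decomposition $y=v|y|$ produces a partial isometry $v$ close to $y$ in $\|\cdot\|_2$, and since $M$ is a II$_1$ factor the initial and final projections of $v$ have equivalent complements, so $v$ extends to a genuine unitary $u\in M$ with $\|u-y\|_2$ small. Then
$$\|\alpha(x_i)-ux_iu^*\|_2\le\|\alpha(x_i)-yx_iy^*\|_2+2\|x_i\|\,\|u-y\|_2,$$
which can be made smaller than $\epsilon$ by choosing $\delta$ sufficiently small. (Equivalently, one may appeal to the well-known fact that the unitary group is a definable set in the language of tracial von Neumann algebras, so $\inf_{u\in\mathcal{U}}$ is itself a bona fide quantifier and no approximation step is needed.)

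The only non-trivial moving part is the crossed product construction, which is standard; the rest is a direct application of existential closedness to a single $\inf$-formula. In particular, no $\R^\omega$-embeddability hypothesis is needed for this proposition, which is why it is stated for $T_{\vna}$.
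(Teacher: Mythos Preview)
Your proof is correct and follows essentially the same approach as the paper: the strict inclusion via McDuff $\Rightarrow$ property $(\Gamma)$ $\Rightarrow$ Connes--Sakai, and the equality via the crossed product $M\rtimes_\alpha\Z$ together with existential closedness applied to a single $\inf$-formula, followed by upgrading an approximate unitary to a genuine one. The only cosmetic difference is that the paper invokes ``functional calculus'' for this last step where you use polar decomposition (or definability of the unitary group); these are interchangeable standard arguments.
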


\begin{proof}
That $\Inn(M)$ is a proper subgroup of $\AppInn(M)$ follows from the fact that an e.c. II$_1$ factor is McDuff, whence has $(\Gamma)$.  For the second equality, suppose that $\alpha\in \Aut(M)$.  Set $N:=M\rtimes_\alpha \Z$.  Fix $x_1,\ldots,x_n\in M$.  Then $$N\models \inf_u\max(d(uu^*,1),d(u^*u,1),\max_{1\leq i\leq n}d(\alpha(x_i),ux_iu^*))=0.$$  Since $M$ is e.c., there is an almost unitary which almost conjugates each $x_i$ to $\alpha(x_i)$.  By functional calculus, we can find an actual unitary that almost conjugates each $x_i$ to $\alpha(x_i)$ with slightly worse error.
\end{proof}

\begin{rmk}
If $\alpha\in \AppInn(M)$, then for any elementary extension $M'$ of $M$ that is $\kappa^+$-saturated, where $\kappa$ is the density character of $M$, there is $u\in U(M')$ such that $\alpha(x)=uxu^*$ for all $x\in M$.
\end{rmk}

If $M\models \Th_\forall(\R)$ and $\alpha\in \Aut(M)$, then $M\rtimes_\alpha \Z\models \Th_\forall(\R)$; see \cite[Proposition 3.4]{A}.  We can thus repeat the proof of Proposition \ref{ecappinn} and conclude the following:  

%

\begin{prop}\label{ecappinnR}
If $M$ is an e.c. model of $\Th_\forall(\R)$, then $$\Inn(M)<\AppInn(M)=\Aut(M).$$  
\end{prop}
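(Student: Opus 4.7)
The plan is to mimic the proof of Proposition~\ref{ecappinn} almost verbatim, with the one essential adjustment being that now we must ensure the extension used to witness the existential statement stays inside the class $\operatorname{Mod}(\Th_\forall(\R))$, rather than just inside the class of tracial von Neumann algebras. This is exactly what the cited result \cite[Proposition 3.4]{A} provides.

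First I would dispose of the strict inclusion $\Inn(M)<\AppInn(M)$. As noted in the introduction, e.c. models of $\Th_\forall(\R)$ are McDuff II$_1$ factors, hence have property $(\Gamma)$. The Connes--Sakai theorem then immediately yields $\Inn(M)\neq \AppInn(M)$, which combined with the general inclusion $\Inn(M)\leq \AppInn(M)$ gives the strict inequality.

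For the equality $\AppInn(M)=\Aut(M)$, fix $\alpha\in \Aut(M)$ and form the crossed product $N:=M\rtimes_\alpha \Z$. By \cite[Proposition 3.4]{A}, $N\models \Th_\forall(\R)$, so the inclusion $M\subseteq N$ is an inclusion inside the class of models of $\Th_\forall(\R)$. Now for any finite tuple $x_1,\dots,x_n\in M$ and any $\varepsilon>0$, the canonical unitary $u_\alpha\in N$ implementing $\alpha$ witnesses
$$
\inf_u\max\bigl(d(uu^*,1),\,d(u^*u,1),\,\max_{1\leq i\leq n}d(\alpha(x_i),ux_iu^*)\bigr)^N=0.
$$
Since $M$ is e.c.\ in the class of models of $\Th_\forall(\R)$ and $N$ lies in this class, the same infimum vanishes when computed in $M$. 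Thus there is an almost-unitary in $M$ almost-conjugating each $x_i$ to $\alpha(x_i)$, and standard functional calculus upgrades it to an actual unitary with only a slightly larger error. Letting $\varepsilon\to 0$ yields $\alpha\in \AppInn(M)$.

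The only step that is not entirely routine is the appeal to \cite[Proposition 3.4]{A}, which guarantees that the crossed product $M\rtimes_\alpha \Z$ remains $\R^\omega$-embeddable; without this fact, the e.c.\ property of $M$ would not apply to the witness structure $N$, and the argument from Proposition~\ref{ecappinn} could not be transplanted to the $\R^\omega$-embeddable setting. Once that ingredient is in hand, the proof is a verbatim repetition of the argument for the $T_{\vna}$ case.
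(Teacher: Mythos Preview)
Your proof is correct and follows exactly the approach of the paper: invoke \cite[Proposition 3.4]{A} to keep the crossed product $M\rtimes_\alpha\Z$ inside $\operatorname{Mod}(\Th_\forall(\R))$, then repeat the argument of Proposition~\ref{ecappinn} verbatim. The paper's own proof is in fact just a one-sentence reference to this ingredient followed by ``we can thus repeat the proof of Proposition~\ref{ecappinn}.''
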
 

In particular, using Lemma \ref{Rec}, we recover the result of Sakai \cite{Sakai} that $\AppInn(\R)=\Aut(\R)$.  We now aim to prove that $\Th(\R)$ is not model-complete.  First, we need the following proposition.  Recall, for an $\la$-structure $M$ and a tuple $a$ from $M$, the \emph{type of $a$ in $M$}, denoted $\tp^M(a)$, is the set of formulae $\varphi(x)$ such that $\varphi^M(a)=0$.
\begin{prop}\label{equaltype}
Suppose that $\Th(\R)$ is model-complete.  Then for any $\R_1\equiv \R$ and any finite tuples $a,b\in \R_1$ of the same length, we have that $\tp^{\R_1}(a)=\tp^{\R_1}(b)$ if and only if $a$ and $b$ are approximately unitarily conjugate in $\R_1$.
\end{prop}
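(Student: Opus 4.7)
The proposition is an iff; one direction is immediate from continuity, while the other uses Proposition \ref{ecappinnR} together with the model-completeness hypothesis.

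For the direction that approximate unitary conjugacy in $\R_1$ implies equality of types, the plan is to argue directly from uniform continuity of formulas. Given any formula $\varphi(x)$, any $\epsilon>0$, and a modulus of uniform continuity for $\varphi$ on the bounded balls in play, pick a unitary $u\in \R_1$ conjugating $a$ to within the required tolerance of $b$. Since conjugation by $u$ is an inner automorphism of $\R_1$, we have $\varphi^{\R_1}(a)=\varphi^{\R_1}(uau^*)$, while uniform continuity gives $|\varphi^{\R_1}(uau^*)-\varphi^{\R_1}(b)|<\epsilon$. Letting $\epsilon\to 0$ yields $\varphi^{\R_1}(a)=\varphi^{\R_1}(b)$, so the types agree. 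Note that this direction does not use the model-completeness hypothesis.

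For the converse, suppose $\tp^{\R_1}(a)=\tp^{\R_1}(b)$. Pass to a sufficiently saturated and strongly homogeneous elementary extension $\R'\succeq \R_1$. Equality of types, combined with strong homogeneity, produces $\alpha\in\Aut(\R')$ with $\alpha(a)=b$. To invoke Proposition \ref{ecappinnR}, we need $\R'$ to be an e.c. model of $\Th_\forall(\R)$, and this is precisely where the model-completeness hypothesis is used. Given any extension $\R'\subseteq N$ with $N\models\Th_\forall(\R)$, further embed $N$ into an ultrapower $\R^\u$ (which exists since $N\models\Th_\forall(\R)$). Then $\R'$ and $\R^\u$ are both models of $\Th(\R)$ with $\R'\subseteq\R^\u$, so model-completeness gives $\R'\preceq \R^\u$. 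Sandwiching the value of any $\inf$-formula between $\R'$, $N$, and $\R^\u$ shows $\R'$ is existentially closed in $N$, so $\R'\in \E_{\Th(\R)}$.

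Proposition \ref{ecappinnR} now says $\alpha$ is approximately inner in $\R'$, so for each $\epsilon>0$ there is a unitary $u\in\R'$ with $\max_i\|ua_iu^*-b_i\|_2<\epsilon$. Equivalently, the value of the $\inf$-formula
\[
\inf_u\bigl(\max_i\|ua_iu^*-b_i\|_2 + \|uu^*-1\|_2 + \|u^*u-1\|_2\bigr)
\]
with parameters $a,b$ from $\R_1$ is $0$ in $\R'$. Since $\R_1\preceq \R'$, this value is also $0$ in $\R_1$; functional calculus then upgrades the near-unitary witnesses to genuine unitaries in $\R_1$ with a small loss of precision, giving approximate unitary conjugacy of $a$ and $b$ in $\R_1$. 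The main technical point requiring care is the production of the automorphism $\alpha$ of $\R'$ from equality of types, which rests on the standard back-and-forth argument in a saturated, strongly homogeneous continuous structure; once $\alpha$ is in hand, the rest of the argument is a clean assembly of model-completeness with the previously established Proposition \ref{ecappinnR}.
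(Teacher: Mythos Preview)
Your proof is correct and follows essentially the same route as the paper's: for the nontrivial direction, pass to a strongly homogeneous elementary extension to realize the type-equality by an automorphism, use model-completeness to see this extension is an e.c.\ model of $\Th_\forall(\R)$, apply Proposition \ref{ecappinnR} to get approximate innerness, and pull the witnessing $\inf$-condition back to $\R_1$ via elementarity. Your treatment of the easy direction via uniform continuity (rather than the paper's passage to an ultrapower) and your explicit verification that model-completeness yields e.c.\ in $\Th_\forall(\R)$ are minor variations in presentation, not substantive differences.
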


\begin{proof}
Without loss of generality, we may suppose that $\R_1$ is separable.  Certainly if $a$ and $b$ are approximately unitarily conjugate in $\R_1$, then they are unitarily conjugate in some ultrapower $\R_1^\u$ of $\R_1$, whence they have the same type in $\R_1^\u$, and hence in $\R_1$.  Conversely, suppose that $\tp^{\R_1}(a)=\tp^{\R_1}(b)$.  Go to a strongly $\omega$-homogeneous elementary extension $\R_2$ of $\R_1$ (see \cite[Section 7]{BBHU}).   Then there is $\alpha\in \Aut(\R_2)$ such that $\alpha(a)=b$.  Since $\Th(\R)$ is model-complete, every model of $\Th(\R)$ is existentially closed, whence, by Lemma \ref{ecappinnR}, we have that $a$ and $b$ are approximately unitarily conjugate in $\R_2$, and hence in $\R_1$.
\end{proof}

%

%
%

We will need the following:

\begin{fact}[Jung \cite{Jung}]\label{jung}
Suppose that $M$ is a finitely generated $\R^\omega$-embeddable factor such that, for any two embeddings $i,j:M\to \R^\u$, there is $u\in U(\R^\u)$ such that $i(x)=uj(x)u^*$ for all $x\in M$.  Then $M\cong \R$.
\end{fact}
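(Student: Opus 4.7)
The plan is to translate the uniqueness-of-embedding hypothesis into a microstate-theoretic property (Jung's notion of \emph{tubularity}) and then apply free-entropy arguments to force hyperfiniteness.

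Fix a finite generating tuple $\vec x=(x_1,\ldots,x_n)$ for $M$ with $\|x_i\|\le R$, and for $m,k\in\n$, $\gamma>0$ let $\Gamma_R(\vec x;m,k,\gamma)\subseteq M_k(\mathbb{C})^n$ be Voiculescu's matricial microstate space of $n$-tuples whose mixed $*$-moments up to order $m$ match those of $\vec x$ under $\tr_k$ to within $\gamma$. Since $M$ is $\R^\omega$-embeddable, these spaces are nonempty for all large $k$. An embedding $M\to\R^\u$ is, up to ultrafilter representatives, a sequence of elements of $\Gamma_R(\vec x;m(k),k,\gamma(k))$ with $m(k)\to\infty$ and $\gamma(k)\to 0$, and unitary conjugacy of two embeddings in $\R^\u$ is equivalent, by a standard diagonal extraction, to the corresponding sequences being asymptotically unitarily conjugate in $U(k)$ in normalized $\|\cdot\|_2$. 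Unwinding the hypothesis this way yields that $\vec x$ is \emph{tubular}: for every $\epsilon>0$ there exist $m,\gamma,k_0$ such that for $k\ge k_0$ and $\vec A,\vec B\in\Gamma_R(\vec x;m,k,\gamma)$, one has $\inf_{U\in U(k)}\|U\vec AU^*-\vec B\|_2<\epsilon$.

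Tubularity of $\vec x$ says the microstate space is contained, up to $\epsilon$, in a single $U(k)$-orbit, so its $\epsilon$-covering number in $\|\cdot\|_2$ is bounded by the $\epsilon$-covering number of $U(k)$ acting by conjugation, which is of the order $\exp(O(k^2\log(1/\epsilon)))$. Plugging this estimate into Voiculescu's definition of the modified free-entropy dimension gives $\delta_0(\vec x)\le 1$, and a more careful bookkeeping produces Jung's condition of \emph{strong $1$-boundedness} for $\vec x$. Invoking Jung's theorem that a finitely generated II$_1$ factor with a strongly $1$-bounded generating tuple is hyperfinite (for comparison, Voiculescu's computation gives $\delta_0>1$ for free generators), we conclude that $M$ is a separable hyperfinite II$_1$ factor, whence $M\cong\R$ by the Murray--von Neumann uniqueness theorem.

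The main obstacle is the implication tubularity $\Rightarrow$ hyperfiniteness. The covering-number estimate itself is routine, but converting it into an actual AFD decomposition of $M$ requires the full weight of free-entropy dimension theory: one has to rule out that the low-dimensional microstate space supports non-amenable behaviour, which is precisely what strong $1$-boundedness prevents. The model-theoretic reformulation as uniqueness of embedding into $\R^\u$ is, by comparison, a bookkeeping exercise; the depth of the fact lies in this final step.
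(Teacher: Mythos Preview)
The paper does not prove this statement; it is quoted as a black-box fact from \cite{Jung}, so there is no in-paper argument to compare against, only Jung's original one.

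Your reduction to tubularity is correct and is exactly the first move in \cite{Jung}: the hypothesis that any two embeddings $M\to\R^\u$ are unitarily conjugate unwinds, via a diagonalisation on ultrafilter representatives, to the statement that any two microstates for $\vec x$ in $M_k$ are approximately $U(k)$-conjugate for large $k$, i.e.\ $\vec x$ is tubular. The covering-number estimate you extract from this is also right.

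The gap is the last step. You invoke a ``theorem of Jung'' that a strongly $1$-bounded generating tuple forces the factor to be hyperfinite. There is no such theorem, and the implication is false: non-hyperfinite strongly $1$-bounded II$_1$ factors exist (for instance factors arising from property~(T) groups, and more generally there is by now a substantial supply of such examples). Strong $1$-boundedness records only that $\delta_0\le 1$ together with a uniform covering bound; it is strictly weaker than amenability, so your argument cannot close this way.

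Jung's actual route in \cite{Jung} does not pass through strong $1$-boundedness. Tubularity says the microstate space sits in an $\epsilon$-neighbourhood of a \emph{single} $U(k)$-orbit; the representative of that orbit gives a concrete matrix model which, transported through an embedding into $\R^\u$ and pulled back, produces a finite-dimensional subalgebra of $M$ that $\epsilon$-contains the generators $\vec x$. Running this for every $\epsilon$ yields an internal AFD approximation, and hyperfiniteness follows from the Murray--von Neumann/Connes characterisation. The point is that ``one orbit'' is a far more rigid condition than the covering bound you derived from it; you discarded exactly the information needed to finish.
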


In order to apply Fact \ref{jung}, we must observe that any separable $\R'\equiv \R$ is finitely generated.  In fact, if $\R'\equiv \R$, then $\R'$ is McDuff, whence singly generated (see \cite[Theorem 1]{behncke} for an even more general statement).
\begin{thm}
$\Th(\R)$ is not model-complete.
\end{thm}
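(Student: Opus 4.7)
The plan is to suppose $\Th(\R)$ is model-complete and derive a contradiction by combining Proposition \ref{equaltype} with Fact \ref{jung}. Under model-completeness, every embedding between models of $\Th(\R)$ is elementary, so if I can exhibit two embeddings of some $M\models\Th(\R)$ into $\R^\u$ that fail to be unitarily conjugate even though Proposition \ref{equaltype} would force them to be, I win.

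The first task is to produce a separable $M\equiv\R$ with $M\not\cong\R$. I would fix an embedding $L(\mathbb F_2)\hookrightarrow\R^\u$ (which exists because $L(\mathbb F_2)$ is $\R^\omega$-embeddable) and then, by Downward L\"owenheim--Skolem applied to a countable dense subset of this copy, obtain a separable elementary submodel $M\preceq\R^\u$ that contains $L(\mathbb F_2)$. Then $M\equiv\R^\u\equiv\R$, but $M\not\cong\R$ because $M$ contains the non-hyperfinite subfactor $L(\mathbb F_2)$, whereas every subfactor of $\R$ is hyperfinite by Connes's theorem. Since $M\equiv\R$, $M$ is McDuff and therefore singly generated, say $M=W^*(a)$.

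With $M$ in hand, the contrapositive of Fact \ref{jung} yields embeddings $i,j:M\to\R^\u$ that are not unitarily conjugate. Assuming model-completeness, both $i$ and $j$ are elementary (both $M$ and $\R^\u$ are models of $\Th(\R)$), so $\tp^{\R^\u}(i(a))=\tp^{M}(a)=\tp^{\R^\u}(j(a))$. Applying Proposition \ref{equaltype} with $\R_1=\R^\u$ shows that $i(a)$ and $j(a)$ are approximately unitarily conjugate in $\R^\u$; countable saturation of $\R^\u$ then promotes this to an actual $u\in U(\R^\u)$ with $ui(a)u^*=j(a)$. Because $a$ generates $M$ as a von Neumann algebra, a routine $2$-norm approximation argument gives $ui(x)u^*=j(x)$ for every $x\in M$, contradicting the fact that $i$ and $j$ were chosen non-conjugate.

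The step I expect to be the main obstacle is the first one: producing a separable $M\equiv\R$ with $M\not\cong\R$ \emph{without} appealing to the continuum-many-models result that is established only later via forcing. Routing through $L(\mathbb F_2)$ together with the classical fact that subfactors of $\R$ are hyperfinite sidesteps this cleanly. After that, everything else is bookkeeping: confirming that both embeddings are elementary under the hypothesis, passing from approximate to exact unitary conjugacy by countable saturation, and extending conjugacy from the generator $a$ to all of $M=W^*(a)$.
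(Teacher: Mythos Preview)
Your argument is correct and follows essentially the same route as the paper's proof: assume model-completeness, use Proposition~\ref{equaltype} to get approximate unitary conjugacy of the images of a generator, upgrade via $\omega_1$-saturation of $\R^\u$, and contradict Fact~\ref{jung}. The only difference is in how the separable $M\equiv\R$ with $M\not\cong\R$ is obtained: the paper simply cites \cite[Theorem~4.3]{FHS3} for a separable $\R'\preceq\R^\u$ not isomorphic to $\R$, while you construct one explicitly by taking a separable elementary submodel of $\R^\u$ containing a copy of $L(\mathbb F_2)$ and invoking Connes's theorem---a pleasant and self-contained alternative, but not a genuinely different strategy.
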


\begin{proof}
Suppose, towards a contradiction, that $\Th(\R)$ is model-complete. Suppose that $\R'\preceq \R^\u$ is separable and not isomorphic to $\R$; this is possible by \cite[Theorem 4.3]{FHS3}.  We show that every embedding $j:\R'\to \R^\u$ is implemented by a unitary, that is, there is $u\in \R^\u$ such that, for every $x\in \R'$, $j(x)=uxu^*$; this will contradict Fact \ref{jung}. Fix a generator $x$ for $\R'$.  By model-completeness, $\tp^{\R^\u}(x)=\tp^{\R^\u}(j(x))$.  Thus, by Proposition \ref{equaltype}, $x$ and $j(x)$ are approximately unitarily conjugate in $\R^\u$; since $\R^\u$ is $\omega_1$-saturated (see \cite[Proposition 4.11]{FHS2}), it follows that $x$ and $j(x)$ are unitarily conjugate in $\R^\u$.  It follows that $j$ is implemented by a unitary, yielding the desired contradiction.
\end{proof}

\begin{cor}
$\Th_\forall(\R)$ does not have a model companion.  Consequently, the e.c. models of $\Th_\forall(\R)$ do not form an axiomatizable class.
\end{cor}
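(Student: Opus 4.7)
The plan is to reduce both assertions to the theorem just proved. For the first, I will argue by contradiction: if $\Th_\forall(\R)$ has a model companion $T^*$, I will show $T^* = \Th(\R)$, whereupon Robinson's test forces $\Th(\R)$ to be model-complete, contradicting the theorem. For the second (``consequently'') clause, if $\E_{\Th(\R)}$ were axiomatizable by some $T''$, then by the very definition of model companion recorded in the introduction $T''$ would itself be a model companion of $\Th_\forall(\R)$, so the second assertion follows at once from the first.

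Now suppose $T^*$ is a model companion of $\Th_\forall(\R)$. By definition $\operatorname{Mod}(T^*) = \E_{\Th(\R)}$, and Lemma \ref{Rec} places $\R$ in this class, so $\R \models T^*$ and hence $T^* \subseteq \Th(\R)$. For the reverse inclusion I take an arbitrary $M \models T^*$ and show $M \equiv \R$. Since $M \models \Th_\forall(\R)$, the ultrapower characterization of universal theories recalled in the introduction yields an embedding $M \hookrightarrow \R^\u$ for a suitable ultrafilter $\u$. Now $\R \preceq \R^\u$, so $\Th(\R^\u) = \Th(\R) \supseteq T^*$ and hence $\R^\u \models T^*$. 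Thus $M \hookrightarrow \R^\u$ is an embedding between two models of $T^*$, and because every model companion is model-complete by Robinson's test, this embedding must be elementary. It follows that $M \equiv \R^\u \equiv \R$, so $T^* = \Th(\R)$, contradicting the theorem above.

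No step here looks genuinely difficult; the whole argument is a tight deduction from the theorem together with the basic properties of model companions set up in the introduction. The one small thing to verify is the passage $\R \models T^* \Rightarrow \R^\u \models T^*$, which is immediate from the elementarity of the diagonal embedding $\R \to \R^\u$.
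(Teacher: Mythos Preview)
Your argument is correct and follows the same overall strategy as the paper: assume a model companion $T^*$ exists, deduce that $\Th(\R)$ is model-complete, and contradict the preceding theorem. The paper's proof is terser---it cites \cite[Proposition~3.2]{GHS} for the inclusion $T^*\subseteq\Th(\R)$ and then observes (implicitly) that $\operatorname{Mod}(\Th(\R))\subseteq\operatorname{Mod}(T^*)=\E_{\Th(\R)}$, so every model of $\Th(\R)$ is e.c.\ and Robinson's test finishes. You instead obtain $T^*\subseteq\Th(\R)$ internally from Lemma~\ref{Rec} and then run the ultrapower argument to show $T^*$ is complete and hence equal to $\Th(\R)$; this is a perfectly good self-contained substitute for the external citation, and in fact your ultrapower step is essentially what the proof of \cite[Proposition~3.2]{GHS} does.
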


\begin{proof}
The proof of \cite[Proposition 3.2]{GHS} shows that any model complete theory of $\R^\omega$-embeddable II$_1$ factors must be contained in $\Th(\R)$.  Thus, if the model companion of $\Th_\forall(\R)$ existed, we would have that $\Th(\R)$ is model-complete, a contradiction.
\end{proof}

We should remark that the fact that $\Th(\R)$ is not model-complete gives a more elementary proof of \cite[Corollary 3.5]{GHS}, namely that CEP implies that there are no model-complete theories of II$_1$ factors.  Indeed, this proof is a bit simpler than the one given in \cite{GHS} as it does not require us to use the fact that $T_{\vna}$ does not have a model companion, which in turn involves some nontrivial results of Nate Brown from \cite{Brown}.

\begin{cor}
Assume that the continuum hypothesis (CH) holds.  Then for any nonprincipal ultrafilter $\u$ on $\n$, there is an embedding $f:\R^\u\to \R^\u$ that is not existential.
\end{cor}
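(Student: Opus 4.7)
The plan is to exploit the failure of model-completeness together with the strong amalgamation result of Section~\ref{amalg}. By the theorem above, $\Th(\R)$ is not model-complete, so by Robinson's test and downward L\"owenheim--Skolem there exist separable models $M\subseteq N$ of $\Th(\R)$ with the inclusion not existential, witnessed by a quantifier-free formula $\varphi(\vec x,\vec y)$ and a tuple $\vec a\in M$ satisfying $\inf_{\vec y}\varphi(\vec a,\vec y)^M>\inf_{\vec y}\varphi(\vec a,\vec y)^N$. Under CH, $\R^\u$ is $\aleph_1$-saturated of cardinality $\aleph_1$ (see \cite[Proposition 4.11]{FHS2}) and hence universal for separable models of $\Th(\R)$. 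First I would fix elementary embeddings $\alpha\colon M\to\R^\u$ and $\beta\colon N\to\R^\u$ and set $\vec a_1:=\alpha(\vec a)$ and $\vec a_2:=\beta(\vec a)$. Elementarity of $\alpha$ and $\beta$ then forces
$$\inf_{\vec y}\varphi(\vec a_1,\vec y)^{\R^\u}=\inf_{\vec y}\varphi(\vec a,\vec y)^M>\inf_{\vec y}\varphi(\vec a,\vec y)^N=\inf_{\vec y}\varphi(\vec a_2,\vec y)^{\R^\u},$$
so $\vec a_1$ and $\vec a_2$ share the same quantifier-free type in $\R^\u$ but have distinct full types.

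Next I would construct two endomorphisms of $\R^\u$ identifying $\vec a_1$ with $\vec a_2$, via the strong amalgamation of Section~\ref{amalg}. Under CH, $\R^\u$ is itself e.c.\ for $\Th_\forall(\R)$---a consequence of $\aleph_1$-saturation together with the elementarity of $\R\preceq\R^\u$ (from \cite{Jung}) and the e.c.\ property of $\R$ established in Lemma~\ref{Rec}---so the separable elementary submodel $\alpha(M)\preceq\R^\u$ is also e.c.\ for $\Th_\forall(\R)$ and is therefore a strong amalgamation base. Applying Section~\ref{amalg} to the two embeddings of $\alpha(M)$ into $\R^\u$ given by the inclusion and by $\alpha(\vec m)\mapsto\beta(\vec m)$ yields an $\R^\omega$-embeddable II$_1$ factor $P$ and embeddings $h_1,h_2\colon\R^\u\to P$ satisfying $h_1(\alpha(\vec m))=h_2(\beta(\vec m))$ for every $\vec m\in M$. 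Embedding $P$ into some ultrapower $\R^{\u'}$ and appealing to CH to fix an isomorphism $\R^{\u'}\cong\R^\u$ (both being saturated models of $\Th(\R)$ of cardinality $\aleph_1$), I convert $h_1$ and $h_2$ into endomorphisms $\widetilde h_1,\widetilde h_2\colon\R^\u\to\R^\u$ satisfying $\widetilde h_1(\vec a_1)=\widetilde h_2(\vec a_2)$.

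Finally, if both $\widetilde h_1$ and $\widetilde h_2$ were existential, then existentiality applied to $\varphi$ together with the equality $\widetilde h_1(\vec a_1)=\widetilde h_2(\vec a_2)$ would force
$$\inf_{\vec y}\varphi(\vec a_1,\vec y)^{\R^\u}=\inf_{\vec y}\varphi(\widetilde h_1(\vec a_1),\vec y)^{\R^\u}=\inf_{\vec y}\varphi(\widetilde h_2(\vec a_2),\vec y)^{\R^\u}=\inf_{\vec y}\varphi(\vec a_2,\vec y)^{\R^\u},$$
contradicting the strict inequality of the first paragraph. Hence at least one of $\widetilde h_1,\widetilde h_2$ is the desired non-existential embedding $f\colon\R^\u\to\R^\u$. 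The hard part will be verifying that $\R^\u$, and therefore $\alpha(M)$, is e.c.\ for $\Th_\forall(\R)$ under CH, which is the prerequisite for invoking the strong amalgamation result of Section~\ref{amalg}.
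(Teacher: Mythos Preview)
Your approach has a fatal gap at precisely the point you flagged as ``the hard part'': the claim that $\R^\u$ is e.c.\ for $\Th_\forall(\R)$ is \emph{false}, and your own setup shows why. You chose $M\subseteq N$ with the inclusion not existential, so $M$ is not an e.c.\ model of $\Th_\forall(\R)$. But $\alpha:M\to\R^\u$ is an elementary embedding, so $\alpha(M)\cong M$ and $\alpha(M)\preceq\R^\u$. If $\R^\u$ were e.c., then by the fact recalled in the introduction (an elementary substructure of an e.c.\ model of $T_\forall$ is again e.c.), $\alpha(M)$---and hence $M$---would be e.c., a contradiction. More globally: under CH every separable model of $\Th(\R)$ embeds elementarily into $\R^\u$ by saturation, so $\R^\u$ being e.c.\ would force \emph{every} separable model of $\Th(\R)$ to be e.c., which by Robinson's test is exactly model-completeness of $\Th(\R)$---the theorem you are invoking says this fails. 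Consequently $\alpha(M)$ is not e.c., and Section~\ref{amalg} does not apply; since amalgamation of $\R^\omega$-embeddable factors over an arbitrary base is open (as the paper remarks after the corollary in Section~\ref{amalg}), there is no evident repair along these lines.

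The paper's argument bypasses amalgamation entirely and is much shorter. From the failure of model-completeness and Robinson's test one obtains separable $\R_1,\R_2\models\Th(\R)$ and a non-existential embedding $g:\R_1\to\R_2$. The ultrapower map $f:=g^\u:\R_1^\u\to\R_2^\u$ is then also non-existential: the same quantifier-free formula and parameter (viewed along the diagonal) witness this, since $\R_i\preceq\R_i^\u$. Under CH the three algebras $\R_1^\u$, $\R_2^\u$, $\R^\u$ are all $\aleph_1$-saturated models of $\Th(\R)$ of cardinality $\aleph_1$, hence pairwise isomorphic (see \cite{FHS2}); transporting $f$ through these isomorphisms yields the desired non-existential endomorphism of $\R^\u$.
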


\begin{proof}
By Robinson's test, there are separable $\R_1,\R_2\models \Th(\R)$ and an embedding $g:\R_1\to \R_2$ that is not existential.  Set $f:=g^\u:\R_1^\u\to \R_2^\u$.  Then $f$ is not existential.  By CH, we have $\R_1^\u\cong \R_2^\u\cong \R^\u$ (see \cite{FHS2}), finishing the proof.
\end{proof}

%

The following corollary uses a standard absoluteness argument from set theory. 
A statement is \emph{arithmetical} if all of its quantifiers range over the set of natural numbers, $\n$. 
Forcing and most standard methods for proving relative consistency with ZFC do not add (or remove) 
elements of  $\n$. Therefore the truth of arithmetical statements is invariant under forcing. 
In particular, if one proves such a statement by using an axiom that can be forced over every model of ZFC (such as the Continuum Hypothesis), then the statement can be proved in ZFC alone
(see e.g., \cite{Fa:Absoluteness} for more examples of absolute statements in analysis).

\begin{cor}
There is $\epsilon>0$ such that, for every $m\in \n^{>0}$, there are tuples $a$ and $b$ from $\R$ whose moments up to order at most $m$ are within $\frac{1}{m}$ of each other and for which there is no unitary $u$ in $\R$ that conjugates $a$ to within $\epsilon$ of $b$ (in $2$-norm).
\end{cor}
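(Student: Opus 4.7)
The plan is to combine the absoluteness principle just described with the previous corollary. First I would check that the statement of the corollary is equivalent to an arithmetical one. Because $\R$ is hyperfinite, every tuple from $\R$ can be approximated in $\|\cdot\|_2$ by a rational matrix tuple inside some $M_{2^n}(\mathbb{C})$, the condition that moments of two such rational tuples agree to within $1/m$ up to order $m$ is computable, and the infimum $\inf_{u\in U(\R)}\max_i\|ua_iu^*-b_i\|_2$ coincides, by density of $\bigcup_n M_{2^n}(\mathbb{C})$ in $\R$, with $\inf_n\inf_{u\in U(M_{2^n})}\max_i\|ua_iu^*-b_i\|_2$, which is an effectively computable real. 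With the statement so arithmetised, Shoenfield absoluteness reduces the problem to proving it under CH.

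Assume CH and, for contradiction, suppose the corollary fails; call the negation $(\star)$. So $(\star)$ says that for every rational $\epsilon>0$ there is $m\in\n$ such that any two tuples $\bar a,\bar b$ from $\R$ whose moments agree to within $1/m$ up to order $m$ admit some $u\in U(\R)$ with $\max_i\|ua_iu^*-b_i\|_2<\epsilon$. For each fixed $\epsilon$, $m$, and tuple length $k$ this is a first-order property of $\R$, so since $\R\preceq\R^{\u}$ the same property holds in $\R^{\u}$. In particular, if $\bar a,\bar b\in\R^{\u}$ have the \emph{same} moments, then for every $\epsilon>0$ there is $u_\epsilon\in U(\R^{\u})$ with $\max_i\|u_\epsilon\bar a_iu_\epsilon^*-\bar b_i\|_2<\epsilon$; by the $\omega_1$-saturation of $\R^{\u}$ (see \cite[Proposition 4.11]{FHS2}) the partial type asserting that $u$ is a unitary with $u\bar au^*=\bar b$ is realised, so some $u\in U(\R^{\u})$ satisfies $u\bar au^*=\bar b$ exactly.

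Under CH the previous corollary supplies an embedding $f:\R^{\u}\to\R^{\u}$ that is not existential. For any finite tuple $\bar a\in\R^{\u}$, the tuples $\bar a$ and $f(\bar a)$ have identical moments, so by the previous paragraph there is $u\in U(\R^{\u})$ with $u\bar au^*=f(\bar a)$. Conjugation by $u$ is a trace-preserving $*$-automorphism of $\R^{\u}$ and hence preserves every quantifier-free formula, so reparametrising $\bar x'=u\bar xu^*$ gives
\[\inf_{\bar x}\varphi(\bar x,\bar a)^{\R^{\u}}=\inf_{\bar x}\varphi(u\bar xu^*,u\bar au^*)^{\R^{\u}}=\inf_{\bar x'}\varphi(\bar x',f(\bar a))^{\R^{\u}}\]
for every quantifier-free $\varphi(\bar x,\bar y)$, contradicting that $f$ is not existential. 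The main technical obstacle is the arithmetisation in the first paragraph --- one has to write down explicit countable data (rational matrix tuples, effective approximations of the unitary-conjugation infimum) so that Shoenfield absoluteness applies --- whereas the model-theoretic content under CH is a quick consequence of $\omega_1$-saturation and the previous corollary.
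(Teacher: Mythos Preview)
Your proof is correct and shares the paper's overall architecture: reduce to CH via absoluteness, then derive a contradiction with the previous corollary by showing that every embedding $\R^\u\to\R^\u$ would be existential. The arithmetisation step is handled essentially the same way in both.

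Where you diverge is in the contradiction argument. The paper argues coordinate-wise: writing $a=[(a_n)]$, $f(a)=[(b_n)]$, it observes that on an ultrafilter-large set the $a_n$ and $b_n$ have nearby moments, applies the negation of the statement inside each copy of $\R$ to get unitaries $u_n$, and then pushes the value of $\inf_x\varphi(x,b_n)$ across to $\inf_x\varphi(x,a_n)$ using the modulus of uniform continuity $\Delta_\varphi$. You instead transfer the negated statement wholesale to $\R^\u$ via $\R\preceq\R^\u$, conclude that equal-moment tuples in $\R^\u$ are approximately unitarily conjugate, upgrade to exact conjugacy by $\omega_1$-saturation, and finish by noting that an inner automorphism preserves every existential formula. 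This is a cleaner, more model-theoretic route: it avoids the $\Delta_\varphi$ bookkeeping and makes the role of saturation explicit.

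One point that deserves a sentence of justification: your claim that ``for each fixed $\epsilon$, $m$, $k$ this is a first-order property of $\R$'' is true but not entirely innocent in continuous logic, since implications are not primitive connectives. You should say that the relevant condition is captured by the sentence
\[
\sup_{\bar a,\bar b}\min\Bigl(\max\bigl(\tfrac{1}{m}-\psi_m(\bar a,\bar b),0\bigr),\ \max\bigl(\theta(\bar a,\bar b)-\epsilon,0\bigr)\Bigr)=0,
\]
where $\psi_m$ is the maximum discrepancy among moments of order $\le m$ and $\theta$ is the infimum over unitaries of the conjugation distance; this sentence has value $0$ in $\R$ under $(\star)$ and hence in $\R^\u$, which is exactly what you need.
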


\begin{proof} 
We first observe that we may safely assume CH in the proof of the corollary. 
Indeed, the truth of the statement remains unaltered if we instead quantify over some ``nice'' (i.e. definable) countable dense subsets of ${\mathbb Q}$ and $\R$; this modified statement is now arithmetical, whence absolute.

Suppose that the statement of the corollary is false; we show that every embedding $f:\R^\u\to \R^\u$, where $\u$ is an ultrafilter on the natural numbers, is existential, contradicting the previous corollary.  Suppose that $f:\R^\u\to \R^\u$ is an embedding and suppose that $\varphi(x,y)$ is a quantifier-free formula (with single variables for simplicity).  Fix $a=[(a_n)]\in \R^\u$ and set $f(a)=b=[(b_n)]$.  Suppose that $(\inf_x\varphi(x,b))^{\R^\u}=r$.  Fix $\eta>0$.  Fix $I_0\in \u$ such that, for $n\in I_0$, we have $(\inf_x \varphi(x,b_n))^\R\leq r+\eta$.  Let $\epsilon:=\Delta_\varphi(\eta)$ and choose $m$ as in the assumption; here $\Delta_\varphi$ is a modulus of uniform continuity for $\varphi$.  Fix $I_1\subseteq I_0$ such that, for $n\in I_1$, we have $a_n$ and $b_n$ have moments up to order $m$ that agree to within $\frac{1}{m}$.  (This is possible because $a$ and $b$ have the same moments.)  For $n\in I_1$, we have unitaries $u_n\in \R$ such that $|u_na_nu_n^*-b_n|<\frac{1}{m}$.  In that case, we get $\inf_x(\varphi(x,a_n))\leq r+2\eta$ for $n\in I_1$.  It follows that $\inf_x(\varphi(x,a))^{\R^\u}\leq r$.
\end{proof}

\section{E.c. models and strong amalgamation bases}\label{amalg}

Until further notice, we let $\la$ be a continuous signature and $\k$ a class of $\la$-structures.

\begin{df}
We say that $A\in \k$ is an \emph{amalgamation base for $\k$} if whenever $B,C\in \k$ both contain $A$, then there is $D\in \k$ and embeddings $f:B\to D$ and $g:C\to D$ such that $f|A=g|A$.  If, in addition, we can always find $D$, $f$, and $g$ such that $f(B)\cap g(C)=f(A)$, we call $A$ a \emph{strong amalgamation base for $\k$}.
\end{df}

If $\k$ is the class of tracial von Neumann algebras, then, by virture of the amalgamated free product construction, every element of $\k$ is an amalgamation base.

For any $\la$-structure $A$, we let $\la_A$ denote the language $\la$ where new constant symbols $c_a$ are added for elements $a\in A$.  We let $D(A)$ denote the atomic diagram of $A$, that is, the set of closed $\la_A$-conditions ``$\sigma(a)=0$,'' where $\sigma(x)$ is a quantifier-free formula, $a$ is a tuple from $A$, and $\sigma(a)^A=0$.  As in classical logic, if $B$ is an $\la_A$-structure that satisfies $D(A)$, then the map sending $a$ to the interpretation of the constant naming $a$ in $B$ is an embedding of $\la$-structures.  

We also let $D^+(A)$ denote the set of all closed conditions ``$\sigma(a)\leq \frac{1}{k}$'', where $\sigma(a)=0$ belongs to $D(A)$ and $k\in \n^{>0}$.  Observe that an $\la_A$ structure satisfies $D(A)$ if and only if it satisfies $D^+(A)$.

The following is the continuous logic analog of a classical model-theoretic fact (see \cite[Theorem 3.2.7]{H}, although for some reason there it is assumed that $T$ is $\forall\exists$-axiomatizable, which is surely unnecessary).
\begin{prop}
Suppose that $T$ is an $\la$-theory and $A$ is an e.c. model of $T$.  Then $A$ is a strong amalgamation base for the models of $T$.
\end{prop}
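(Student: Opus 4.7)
The plan is to adapt the classical diagram argument (Hodges 3.2.7, referenced in the text) to continuous logic. Let $A \subseteq B$ and $A \subseteq C$ be embeddings of models of $T$ with $A$ existentially closed, and without loss of generality assume $B \cap C = A$ as sets. Since $A$ is a closed substructure of $B$, for each $b \in B \setminus A$ we have $\rho_b := d(b, A) > 0$, and similarly $\rho_c > 0$ for $c \in C \setminus A$; set $\delta_{b,c} := \tfrac{1}{3}\min(\rho_b, \rho_c)$. I will show the set of closed conditions
$$ \Sigma := T \cup D^+(B) \cup D^+(C) \cup \{d(b, c) \geq \delta_{b, c} : b \in B\setminus A,\ c \in C\setminus A\} $$
in the expansion of $\la$ with shared constants for $A$ and distinct constants for $B\setminus A$ and $C\setminus A$ is consistent. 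Any model $D \models \Sigma$ immediately gives embeddings $f : B \to D$ and $g : C \to D$ agreeing on $A$ with $d(f(b), g(c)) \geq \delta_{b,c} > 0$, hence $f(B) \cap g(C) \subseteq f(A)$.

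By the compactness theorem of continuous logic, it suffices to show every finite subset of $\Sigma$ is consistent. Consolidating, such a subset takes the form $T \cup \{\sigma(\bar a, \bar b', b_1,\ldots,b_n) \leq 1/k,\ \tau(\bar a, \bar c', c_1,\ldots,c_m) \leq 1/k\} \cup \{d(b_i, c_j) \geq \delta_{ij}\}_{(i,j)\in I}$, where $\sigma, \tau$ are quantifier-free with $\sigma^B = \tau^C = 0$, $b_i \in B\setminus A$, $c_j \in C\setminus A$, and $\delta_{ij} := \delta_{b_i, c_j}$. Assuming no $T$-model satisfies this, continuous compactness supplies $\eta > 0$ such that in every $M \models T$ and every interpretation of the free variables,
$$ \max\bigl(\sigma \dotminus \tfrac{1}{k},\ \tau \dotminus \tfrac{1}{k},\ \max_{(i,j)\in I}(\delta_{ij} \dotminus d(b_i, c_j))\bigr) \geq \eta. $$

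To contradict this I work inside $B$. Since $\bigl(\inf_{\bar u,\vec z}\sigma(\bar a, \bar u, \vec z)\bigr)^B = 0$ (witnessed by the $B$-tuple), the e.c.\ property of $A$ in $B$ applied to the quantifier-free $\sigma$ yields $\bar u^A, \vec z^A \in A$ with $\sigma(\bar a, \bar u^A, \vec z^A)^A < \eta/2$; analogously, e.c.\ of $A$ in $C$ yields $\bar v^A, \vec z'^A \in A$ with $\tau(\bar a, \bar v^A, \vec z'^A)^A < \eta/2$. Evaluate the above bound in $B$ at the tuple whose $B$-constants take their original values (so $\sigma = 0$) and whose $C$-constants take the values $\bar v^A, \vec z'^A$: absoluteness of quantifier-free formulas across the substructure $A \subseteq B$ gives $\tau^B = \tau^A < \eta/2$, which is $< 1/k$ after enlarging $k$ beforehand (which we may). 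The first two terms being below $\eta$, the bound forces some $(i_0, j_0)$ with $\delta_{i_0 j_0} - d(b_{i_0}, z_{j_0}'^A) \geq \eta$, and then
$$ d(b_{i_0}, A) \leq d(b_{i_0}, z_{j_0}'^A) < \delta_{i_0 j_0} \leq \rho_{b_{i_0}}/3 = d(b_{i_0}, A)/3, $$
a contradiction.

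The main obstacle is the continuous-logic compactness step that produces the uniform $\eta$: translating qualitative unsatisfiability of the finite subset into a quantitative separation requires a careful $\dotminus/\max$ encoding of the conditions. Beyond that, the argument hinges on the e.c.\ hypothesis being applied twice (once for $B$, once for $C$) to quantifier-free formulas, combined with the choice $\delta_{b,c} < d(b, A)/3$, which is precisely what prevents any approximate witness drawn from $A$ from satisfying the distance condition.
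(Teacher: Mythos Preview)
Your argument is correct and follows essentially the same diagram-plus-compactness route as the paper. A few minor remarks:

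The paper's version is slightly asymmetric: it keeps the \emph{full} diagram $D(C)$ in the set $\Sigma$ and only a finite piece of $D^+(B)$, then works in $C$ at the end (substituting $A$-witnesses for the $B$-constants). This means e.c.\ is invoked only once, on the $B$-side. Your symmetric setup with $D^+(B)\cup D^+(C)$ is fine, but notice that your first e.c.\ application---producing $\bar u^A,\vec z^A$ for $\sigma$---is never used, since you evaluate in $B$ at the original $B$-values anyway; only the $C$-side witnesses $\bar v^A,\vec z'^A$ matter. Likewise, the factor $1/3$ in $\delta_{b,c}$ and the parenthetical about ``enlarging $k$ beforehand'' are both unnecessary: $\tau^A<\eta/2$ already gives $\tau\dotminus 1/k\le\tau<\eta$, and the final contradiction only needs $\delta_{b,c}\le\rho_{b}$ (the paper simply takes $\delta_c=d(c,A)$). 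These are cosmetic; the skeleton of the proof is the same.
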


\begin{proof}
Suppose that $B,C\models T$ both contain $A$.  Without loss of generality, $B\cap C=A$.  For $c\in C\setminus A$, set $\delta_c:=d(c,A)>0$.  It suffices to show that the following set of $\la_{B\cup C}$-conditions is satisfiable:
$$T\cup D^+(B)\cup D(C)\cup \{d(b,c)\geq \delta_c \ | \ b\in B\setminus A, c\in C\setminus A\}.$$  Suppose that this is not the case.  Then there is $k\in \n^{>0}$, $\vec b=(b_1,\ldots,b_n)$ from $B\setminus A$, a quantifier-free formula $\chi(\vec b,\vec d)$, where $\vec d\in A$ and $\chi^B(\vec b,\vec d)=0$, and $c_1,\ldots,c_n$ from $C\setminus A$ such that $$T\cup \{\chi(\vec b,\vec d)\leq \frac{1}{k}\}\cup D(C)\cup \{d(b_i,c_i)\geq \delta_{c_i} \ | \ i=1,\ldots,n\}$$ is unsatisfiable.  Consequently, the set of $\la_C$-conditions
$$T\cup \{\chi(\vec x,\vec d)\leq \frac{1}{k}\}\cup D(C)\cup \{d(x_i,c_i)\geq \delta_{c_i} \ | \ i=1,\ldots,n\}$$ is unsatisfiable.  Since $A$ is e.c., there is $\vec a\in A$ such that $\chi^A(\vec a,\vec d)\leq \frac{1}{k}$, whence $\chi^C(\vec a,\vec d)\leq \frac{1}{k}$.  Consequently, there is $i\in \{1,\ldots,n\}$ such that $d(a_i,c_i)<\delta_{c_i}$, a contradiction.
\end{proof}

Observe in the previous proof that we could have replaced $D(C)$ by the full elementary diagram of $C$, whence we can always assume that the amalgam is an elementary extension of $C$.  Also observe that, by Downward L\"owenheim-Skolem, we can ensure that the amalgam has density character equal to the maximum of the density characters of $B$ and $C$.

\begin{cor}
Any e.c. $\R^\omega$-embeddable  von Neumann algebra is a strong amalgamation base for the class of $\R^\omega$-embeddable von Neumann algebras.  
\end{cor}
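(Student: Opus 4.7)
The plan is to specialize the preceding proposition to the theory $T := \Th_\forall(\R)$. As recalled in the introduction, a tracial von Neumann algebra models $\Th_\forall(\R)$ if and only if it embeds into an ultrapower of $\R$, i.e., is $\R^\omega$-embeddable. Hence $\operatorname{Mod}(\Th_\forall(\R))$ is exactly the class under consideration, and an e.c.\ $\R^\omega$-embeddable tracial von Neumann algebra is by definition an e.c.\ model of $\Th_\forall(\R)$.

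Given an e.c.\ $\R^\omega$-embeddable algebra $A$ together with embeddings $f_1 \colon A \to B$ and $f_2 \colon A \to C$ into $\R^\omega$-embeddable algebras $B$ and $C$, we may rename elements so as to identify $A$ with its two images and arrange $A \subseteq B$, $A \subseteq C$, and $B \cap C = A$. Now $B, C \models \Th_\forall(\R)$ both contain $A$, so the preceding proposition furnishes an amalgam $D \models \Th_\forall(\R)$ together with embeddings $g_1 \colon B \to D$ and $g_2 \colon C \to D$ that restrict to the same embedding of $A$ and satisfy $g_1(B) \cap g_2(C) = g_1(A)$. Since $D$ models $\Th_\forall(\R)$, it is $\R^\omega$-embeddable, which is exactly the strong amalgamation conclusion.

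There is essentially no obstacle beyond this bookkeeping, since the preceding proposition is formulated for an arbitrary continuous theory $T$ and its compactness-based proof goes through verbatim once one observes that $\Th_\forall(\R)$ is a legitimate $\la$-theory. The only point worth emphasizing is that the e.c.\ hypothesis on $A$ is used exactly as in that proposition: the unsatisfiability of $T \cup D^+(B) \cup D(C)$ together with the separation conditions $d(b_i, c_i) \geq \delta_{c_i}$ would force the existence of a quantifier-free formula $\chi$ whose infimum is realized in $C$ but not approximable in $A$, contradicting the fact that $A$ is existentially closed in any $\R^\omega$-embeddable extension. In short, the corollary is a direct specialization and no new ideas beyond identifying the right theory are required.
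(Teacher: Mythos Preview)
Your proposal is correct and matches the paper's approach exactly: the corollary is stated without proof in the paper because it is an immediate specialization of the preceding proposition to the theory $T=\Th_\forall(\R)$, using the identification of models of $\Th_\forall(\R)$ with $\R^\omega$-embeddable tracial von Neumann algebras. Your additional recap of the proposition's argument is unnecessary but harmless.
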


We should compare this result with the difficult result of \cite{BDJ} that if $M_i$ are $\R^\omega$-embeddable II$_1$ factors for $i=1,2$, then the amalgamated free product $M_1*_\R M_2$ is also $\R^\omega$-embeddable.  This is the best such result known in the sense that if one replaces $\R$ by another $\R^\omega$-embeddable tracial von Neumann algebra, then it is unknown whether or not the amalgamated free product is $\R^\omega$-embeddable.

\begin{question}
Is every model of $\Th_\forall(\R)$ an amalgamation base?
\end{question}

\section{Infinitely generic structures}\label{infgenericsection}

In this section, we assume that $\k$ is an inductive class of $\la$-structures.  We will prove the existence of a very natural subclass of $\k$, the so-called \emph{infinitely generic} elements of $\k$, which can be characterized as the unique maximal subclass of $\k$ that is model-complete and model-consistent with $\k$.  These structures will turn out to be existentially closed elements of $\k$.  Our treatment of infinitely generic structures in continuous logic is inspired by the classical treatment of this topic presented in \cite{Hirsh}.  For the sake of simplicity, we work in the bounded continuous logic of \cite{BBHU}, where all predicates and formulae take values in $[0,1]$ (although we apply the general theory to the unbounded case of tracial von Neumann algebras).

We arrive at the class of infinitely generic structures via \emph{infinite forcing}.  For $M\in \k$, $\sigma$ a restricted $\la_M$ sentence in prenex normal form (see \cite[Section 6]{BBHU}), $\bowtie \in \{<,\leq, >, \geq\}$, and $r\in [0,1]$, we define the relations $M\Vdash \sigma\bowtie r$ recursively on the complexity of $\sigma$:
\begin{itemize}
\item If $\sigma$ is quantifier-free, then $M\Vdash \sigma \bowtie r$ iff $\sigma^M\bowtie r$.
\item Suppose that $\sigma=\inf_x \varphi(x)$.  Then:
\begin{itemize}
\item $M\Vdash \sigma<r$ iff there is $a\in M$ such that $M\Vdash \varphi(a)<r$.
\item $M\Vdash \sigma\leq r$ iff $M\Vdash \sigma<r'$ for every $r'>r$.
\item $M\Vdash \sigma\geq r$ iff there does not exist $N\in \k$ with $N\supseteq M$ and  $a\in N$ such that $N\Vdash \varphi(a)< r$.
\item $M\Vdash \sigma>r$ iff $M\Vdash \sigma\geq r'$ for some $r'>r$.
\end{itemize}
\item Suppose that $\sigma=\sup_x\varphi(x)$.  Then:
\begin{itemize}
\item $M\Vdash \sigma\leq r$ iff for there does not exist $N\in \k$ with $N\supseteq M$ and $a\in N$ such that $N\Vdash \varphi(a)>r$.
\item $M\Vdash \sigma< r$ iff $M\Vdash \sigma\leq r'$ for some $r'<r$.
\item $M\Vdash \sigma> r$ iff there is $a\in M$ such that $M\Vdash \varphi(a)>r$.
\item $M\Vdash \sigma\geq r$ iff $M\Vdash \sigma> r'$ for all $r'<r$.
\end{itemize}
\end{itemize}

The next three lemmas are routine and are left to the reader.

\begin{lemma}
Suppose that $M\in \k$, $\sigma$ is a restricted $\la_M$ sentence in prenex normal form and $r,s\in [0,1]$ are such that $r<s$.  Then:
\begin{enumerate}
\item If $\bowtie\in \{<,\leq\}$ and $M\Vdash \sigma\bowtie r$, then $M\Vdash \sigma \bowtie s$.
\item If $\bowtie \in \{>,\geq\}$ and $M\Vdash \sigma\bowtie s$, then $M\Vdash \sigma\bowtie r$.
\end{enumerate}
\end{lemma}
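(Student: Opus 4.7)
The plan is structural induction on $\sigma$ in prenex normal form, proving parts (1) and (2) simultaneously. The base case — $\sigma$ quantifier-free — reduces immediately to the monotonicity properties of the orderings $<,\leq,>,\geq$ on $[0,1]$, since there $M\Vdash\sigma\bowtie r$ is by definition just $\sigma^M\bowtie r$.

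For the inductive step I would treat $\sigma=\inf_x\varphi(x)$ and appeal to symmetry for $\sigma=\sup_x\varphi(x)$, noting that the two cases become identical once one swaps $<$ with $>$ and $\leq$ with $\geq$ throughout. The strict clauses are handled directly from the definition: if $M\Vdash\sigma<r$, some $a\in M$ satisfies $M\Vdash\varphi(a)<r$, so the inductive hypothesis (1) applied to $\varphi(a)$ gives $M\Vdash\varphi(a)<s$ and hence $M\Vdash\sigma<s$; dually, if $M\Vdash\sigma>s$ then $M\Vdash\sigma\geq s'$ for some $s'>s$, and since $s'>s>r$ forces $s'>r$, the same $s'$ witnesses $M\Vdash\sigma>r$ by the definition of $>$ at $\inf_x\varphi(x)$.

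Then I would turn to the non-strict clauses. For $\leq$: if $M\Vdash\sigma\leq r$, then $M\Vdash\sigma<r'$ for every $r'>r$, so in particular for every $s'>s$ (which automatically satisfies $s'>r$), giving $M\Vdash\sigma\leq s$. For $\geq$: if $M\Vdash\sigma\geq s$, meaning no $N\in\k$ with $N\supseteq M$ and $a\in N$ satisfies $N\Vdash\varphi(a)<s$, then supposing some $N,a$ had $N\Vdash\varphi(a)<r$, the inductive hypothesis (1) applied to $\varphi(a)$ inside $N$ would force $N\Vdash\varphi(a)<s$, a contradiction; hence $M\Vdash\sigma\geq r$.

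The only point requiring care — which is a bookkeeping observation rather than a genuine obstacle — is that in the $\geq$ clause (and its dual $\leq$ clause for $\sup$) the forcing relation is defined by quantifying over all extensions $N\in\k$ with $N\supseteq M$, so the inductive hypothesis must be invoked for $\varphi(a)$ inside such $N$ rather than inside $M$. Since the lemma is asserted uniformly for every structure in $\k$, this is legitimate, and the induction goes through cleanly.
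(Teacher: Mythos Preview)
Your argument is correct: the structural induction handles each quantifier case cleanly, and you rightly flag that the inductive hypothesis for the $\geq$ clause (and its dual) must be invoked inside the extension $N$, which is fine because the statement is uniform over all structures in $\k$. The paper itself offers no proof, declaring this lemma (together with the next two) ``routine and left to the reader''; your induction is exactly the routine verification the authors are gesturing at.
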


\begin{lemma}
Suppose that $M\in \k$, $\sigma$ is a restricted $\la_M$ sentence in prenex normal form and $r\in [0,1]$.  
\begin{enumerate}
\item If $M\Vdash \sigma< r$, then $M\Vdash \sigma\leq r$.
\item If $M\Vdash \sigma> r$, then $M\Vdash \sigma\geq r$. 
\end{enumerate}
\end{lemma}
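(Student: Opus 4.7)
The plan is to prove both items simultaneously by induction on the quantifier complexity of $\sigma$, making heavy use of the previous lemma (the monotonicity of forcing in $r$) in each inductive step. The base case is immediate: when $\sigma$ is quantifier-free, $M\Vdash \sigma\bowtie r$ just means $\sigma^M\bowtie r$, and any strict real inequality in $[0,1]$ trivially implies the corresponding weak one.

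For the $\inf$-case, say $\sigma=\inf_x\varphi(x)$, part (1) unpacks to: pick $a\in M$ with $M\Vdash \varphi(a)<r$, then for any $r'>r$ apply the previous lemma to get $M\Vdash \varphi(a)<r'$, which by definition gives $M\Vdash \sigma<r'$; since this holds for all $r'>r$, we conclude $M\Vdash \sigma\leq r$. For part (2), unpacking $M\Vdash \sigma>r$ produces an $r'>r$ with $M\Vdash \sigma\geq r'$; if $M\Vdash \sigma\geq r$ failed we would obtain $N\supseteq M$ in $\k$ and $a\in N$ with $N\Vdash \varphi(a)<r$, and then the previous lemma would promote this to $N\Vdash \varphi(a)<r'$, contradicting $M\Vdash \sigma\geq r'$. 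The $\sup$-case is entirely dual: in (1) one uses the witness $r'<r$ with $M\Vdash \sigma\leq r'$ together with the previous lemma to rule out any $N\supseteq M$ and $a\in N$ with $N\Vdash \varphi(a)>r$; in (2) one takes the in-$M$ witness $a$ with $M\Vdash \varphi(a)>r$ and applies the previous lemma to show $M\Vdash \varphi(a)>r'$ for every $r'<r$.

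There is really no main obstacle to speak of, since each inductive step is just a matter of unwinding the definitions and invoking the monotonicity statement of the previous lemma. The only thing to be careful about is that the two items have to be handled together in the induction: the $\inf$-case of item (1) needs only item (1) inductively, but in writing out the $\sup$-case of item (2) one sees that the roles of weak and strict inequalities are reversed between the $\sup$ and $\inf$ clauses of the forcing definition, so it is cleanest to keep both items present throughout the induction and cite the appropriate half of the previous lemma as needed.
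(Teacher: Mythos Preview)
Your argument is correct and is exactly the routine verification the paper has in mind; the authors simply write that this lemma (together with the two surrounding ones) is ``left to the reader.'' One small remark: although you frame the proof as an induction on the complexity of $\sigma$, you never actually invoke the inductive hypothesis of \emph{this} lemma---every step appeals only to the definitions and to the previous monotonicity lemma applied to the inner formula $\varphi(a)$. So the proof is really a case analysis on the outermost quantifier rather than a genuine induction, and your closing comment that ``the two items have to be handled together in the induction'' is unnecessary. This does not affect correctness.
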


\begin{lemma}\label{coherency}
Suppose that $M\in \k$, $\sigma$ is a restricted $\la_M$ sentence in prenex normal form, and $r,s\in [0,1]$.  If $\bowtie \in \{<,\leq\}$ and $\bowtie'\in \{>,\geq\}$ are such that $M\Vdash \sigma\bowtie r$ and $M\Vdash \sigma \bowtie' s$, then $s\leq r$.
\end{lemma}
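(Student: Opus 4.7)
The plan is to prove coherency by a straightforward case analysis on the outermost quantifier in the prenex form of $\sigma$, leveraging the two preceding lemmas. The main tool is the observation that both the $<$-forcing of an $\inf$-statement and the $>$-forcing of a $\sup$-statement are witnessed inside $M$ itself, which allows us to take $N=M$ when deriving the contradiction.

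The first step is to reduce from four subcases to one. By the preceding lemma, $M\Vdash \sigma<r$ implies $M\Vdash \sigma\leq r$ and $M\Vdash \sigma>s$ implies $M\Vdash \sigma\geq s$. Hence it suffices to prove: if $M\Vdash \sigma\leq r$ and $M\Vdash \sigma\geq s$, then $s\leq r$.

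Now I would split on the outermost quantifier of $\sigma$. If $\sigma$ is quantifier-free, the definitions give $\sigma^M\leq r$ and $\sigma^M\geq s$, so $s\leq \sigma^M\leq r$. If $\sigma=\inf_x\varphi(x)$, I argue by contradiction: suppose $r<s$ and fix $r'$ with $r<r'<s$. From $M\Vdash \sigma\leq r$ and the definition of $\leq$, we get $M\Vdash \sigma<r'$, so there exists $a\in M$ with $M\Vdash \varphi(a)<r'$. Since $\varphi$ is itself in prenex normal form (with one fewer quantifier), the monotonicity lemma applies and yields $M\Vdash \varphi(a)<s$. Taking $N:=M$ in the defining clause for $M\Vdash \sigma\geq s$ then produces the required contradiction. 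The case $\sigma=\sup_x\varphi(x)$ is entirely symmetric: pick $s'$ with $r<s'<s$, find $a\in M$ witnessing $M\Vdash \varphi(a)>s'$, use monotonicity to deduce $M\Vdash \varphi(a)>r$, and take $N:=M$ to contradict $M\Vdash \sigma\leq r$.

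There is no serious obstacle here; the proof is really just a verification that the recursive definitions of $\Vdash$ are internally consistent. The only point that might be mistaken for a difficulty is the apparent need for an inductive hypothesis on $\varphi$, but in fact no such hypothesis is used — the monotonicity lemma (which itself is proved by induction) applies uniformly to $\varphi$, and the contradiction is extracted at a single level by setting $N=M$.
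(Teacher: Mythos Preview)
Your proof is correct. The paper itself does not prove this lemma; it states that the three lemmas preceding the definition of $V^M$ and $v^M$ are ``routine and are left to the reader,'' so there is nothing to compare against beyond noting that your argument is exactly the sort of direct verification the authors had in mind. Your reduction to the $\leq/\geq$ case via the preceding lemma and the single-level case split on the outermost quantifier (using $N=M$ as the witnessing extension) is clean and complete; your observation that no genuine inductive hypothesis on $\varphi$ is needed---only the monotonicity lemma applied to $\varphi(a)$---is also correct.
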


\begin{df}
Suppose that $M\in \k$ and $\sigma$ is a restricted $\la_M$ sentence in prenex normal form.  We then define:
\begin{itemize}
\item $V^M(\sigma):=\inf\{r \ : \ M\Vdash \sigma<r\}=\inf\{r \ : \ M\Vdash \sigma\leq r\}$.
\item $v^M(\sigma):=\sup\{r \ : \ M\Vdash \sigma> r\}=\sup\{r \ : \ M\Vdash \sigma\geq r\}$.
\end{itemize}
We refer to $V^M(\sigma)$ and $v^M(\sigma)$ as the \emph{upper and lower forcing values of $\varphi$ in $M$}.
\end{df}

%
%

By Lemma \ref{coherency}, we see that $v^M(\sigma)\leq V^M(\sigma)$ for any restricted $\la_M$-sentence $\sigma$.

\begin{lemma}\label{compat}
Suppose that $M,N\in \k$ are such that $M\subseteq N$ and $\sigma$ is a restricted $\la_M$ sentence in prenex normal form.  Then $v^M(\sigma)\leq v^N(\sigma)\leq V^N(\sigma)\leq V^M(\sigma)$.
\end{lemma}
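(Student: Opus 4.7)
The plan is to handle the three inequalities separately. The middle one, $v^N(\sigma) \leq V^N(\sigma)$, is already noted in the sentence after Lemma~\ref{coherency}. For the outer two I would extract a single claim (P): for every restricted $\la_M$-sentence $\sigma$ in prenex form and every $r \in [0,1]$,
\[
M \Vdash \sigma \leq r \;\Longrightarrow\; N \Vdash \sigma \leq r, \qquad M \Vdash \sigma \geq r \;\Longrightarrow\; N \Vdash \sigma \geq r.
\]
Granting (P), taking infima in the defining set gives $V^N(\sigma) \leq V^M(\sigma)$, and taking suprema gives $v^M(\sigma) \leq v^N(\sigma)$.

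I would prove (P) by induction on the complexity of $\sigma$. The quantifier-free base case is immediate: since $M \subseteq N$ as substructures, $\sigma^M = \sigma^N$ for every $\la_M$-sentence $\sigma$, and in that case $M \Vdash$ and $N \Vdash$ agree outright. In the inductive step for $\sigma = \inf_x \varphi(x)$, the $\geq$ half is the easy direction: any $L \in \k$ extending $N$ that carries a witness $a$ with $L \Vdash \varphi(a) < r$ also extends $M$, so it would refute $M \Vdash \sigma \geq r$. The $\leq$ half is the substantive one. Given $r' > r$, pick $r'' \in (r, r')$. From $M \Vdash \sigma \leq r$ one obtains $a \in M$ with $M \Vdash \varphi(a) < r''$; by the strict-to-non-strict passage (the second of the three lemmas immediately preceding the present one), $M \Vdash \varphi(a) \leq r''$. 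Since $\varphi(a)$ has smaller complexity than $\sigma$, the inductive hypothesis yields $N \Vdash \varphi(a) \leq r''$, and since $r' > r''$ this forces $N \Vdash \varphi(a) < r'$, which witnesses $N \Vdash \sigma < r'$. As $r' > r$ was arbitrary, $N \Vdash \sigma \leq r$. The case $\sigma = \sup_x \varphi(x)$ is entirely dual: $\leq$ is direct from the definition via the same "bigger extension is still an $M$-extension" observation, while $\geq$ requires the analogous interpolation between a strict lower bound in $M$ and the strict lower bound sought in $N$.

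The one point requiring care is the interpolation step in the $\leq$ and $\geq$ cases: because these two relations are defined as limits of strict versions rather than directly, one cannot feed the inductive hypothesis in at a strict inequality without first upgrading it to a non-strict one, and then one needs a strictly larger (resp. smaller) bound to recover strictness downstream. The auxiliary $r''$ is precisely the device that makes this dance work, and it is the only subtlety in what is otherwise a routine structural induction on the prenex quantifier alternation.
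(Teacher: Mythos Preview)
Your argument is correct and follows essentially the same route as the paper: an induction on the prenex complexity of $\sigma$, with the $\inf$ case split into an easy direction (the ``larger extension is still an $M$-extension'' observation for $\geq$) and a direction requiring a little room (your $r''$, the paper's $\epsilon$). The only organizational difference is that the paper inducts directly on the inequality chain $v^M(\sigma)\leq v^N(\sigma)\leq V^N(\sigma)\leq V^M(\sigma)$ and works with the strict relations $<,>$ together with $V$ and $v$, whereas you isolate the middle inequality from Lemma~\ref{coherency} and package the induction as the preservation claim (P) for the non-strict relations $\leq,\geq$; the interpolation step you flag is exactly the paper's $\epsilon$-argument in different clothing.
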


\begin{proof}
The proof is by induction on complexity of $\sigma$, the result being obvious for $\sigma$ quantifier-free.  Suppose first that $\sigma=\inf_x \varphi(x)$.  Suppose that $M\Vdash \sigma \geq r$.  If $N\nVdash \sigma\geq r$, then there is $N'\in \k$, $N'\supseteq N$ and $a\in N'$ such that $N'\Vdash \varphi(a)<r$.  Since $M\subseteq N'$, we have $M\nVdash \sigma\geq r$.  It follows that $v^M(\sigma)\leq v^N(\sigma)$.  Now suppose that $M\Vdash \sigma<r$, so there is $a\in M$ such that $M\Vdash \varphi(a)<r$.  By induction, we have $V^N(\varphi(a))\leq r$.  Fix $\epsilon>0$.  Then $N\Vdash \varphi(a)<r+\epsilon$, whence $N\Vdash \sigma<r+\epsilon$.  Since $\epsilon>0$ was arbitrary, we have $V^N(\sigma)\leq r$.  It follows that $V^N(\sigma)\leq V^M(\sigma)$.

The proof for the case $\sigma=\sup_x\varphi(x)$ is similar.
\end{proof}

\begin{df}
We say that $M\in \k$ is \emph{infinitely generic} if, for every restricted $\la_M$ sentence $\sigma$ in prenex normal form, we have $v^M(\sigma)=V^M(\sigma)$.
\end{df}

\begin{prop}\label{contain}
For every $M\in \k$, there is $N\in \k$ with $M\subseteq N$ such that $N$ is infinitely generic.
\end{prop}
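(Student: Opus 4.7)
The plan is to construct $N$ as the union of an $\omega$-chain $M = M_0 \subseteq M_1 \subseteq M_2 \subseteq \cdots$ in $\k$, where each $M_{n+1}$ is built from $M_n$ by an inner transfinite construction designed to close every ``forcing gap'' for sentences with parameters in $M_n$.

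Concretely, at stage $n+1$ I would enumerate all pairs $(\sigma, r)$ with $\sigma$ a restricted $\la_{M_n}$-sentence in prenex normal form and $r \in \mathbb{Q} \cap [0,1]$ (a list of length $|M_n| + \aleph_0$), and then build an inner chain $K_0 = M_n \subseteq K_1 \subseteq K_2 \subseteq \cdots$ of the same length, handling one pair per successor step and taking unions at inner limits (which stays in $\k$ by inductiveness). At the step handling a pair $(\sigma, r)$ with $\sigma = \inf_x \varphi(x)$: if $v^{K_\xi}(\sigma) < r$, then by the very definition of $v^{K_\xi}$ there exists $K_{\xi+1} \in \k$ with $K_{\xi+1} \supseteq K_\xi$ and a witness $a \in K_{\xi+1}$ satisfying $K_{\xi+1} \Vdash \varphi(a) < r$, and I pick such a $K_{\xi+1}$; otherwise set $K_{\xi+1} := K_\xi$. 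The case $\sigma = \sup_x \varphi(x)$ is dual. Then $M_{n+1}$ is the union of this inner chain, and finally $N := \bigcup_n M_n$, both belonging to $\k$ by inductiveness.

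To verify that $N$ is infinitely generic, let $\sigma$ be an arbitrary restricted $\la_N$-sentence. Since $\sigma$ has only finitely many parameters, $\sigma \in \la_{M_n}$ for some $n$, and I claim $v^N(\sigma) = V^N(\sigma)$. Suppose instead $v^N(\sigma) < r < V^N(\sigma)$ for some rational $r$, and consider the inner sub-step $K_\xi \subseteq K_{\xi+1}$ of stage $n+1$ that handled the pair $(\sigma, r)$. In the $\inf$ case, either $v^{K_\xi}(\sigma) \geq r$, so Lemma \ref{compat} yields $v^N(\sigma) \geq v^{K_\xi}(\sigma) \geq r$, contradicting $r > v^N(\sigma)$; or $v^{K_\xi}(\sigma) < r$ and we arranged $V^{K_{\xi+1}}(\sigma) < r$, so Lemma \ref{compat} again gives $V^N(\sigma) \leq V^{K_{\xi+1}}(\sigma) < r$, contradicting $r < V^N(\sigma)$. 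The $\sup$ case is treated dually.

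The only technicality to manage is the bookkeeping of the inner chains: since each added witness may itself introduce new parameters, the inner chain at stage $n+1$ genuinely needs its full length $|M_n|$ (and then $|M_{n+1}|$ may strictly increase), so the density character of $N$ is not controlled. This is of no concern for the existence statement, provided the word ``inductive'' is understood as closure under chains of arbitrary ordinal length, consistent with earlier usage in the excerpt. The existence of the required extension at each inner sub-step is immediate from the definition of the forcing relations, making the verification itself the smoothest part of the argument.
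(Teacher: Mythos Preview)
Your argument is correct and follows essentially the same strategy as the paper: build an $\omega$-chain in which each outer stage closes forcing gaps for sentences with parameters from the previous stage, then take the union. The paper organizes the inner step a bit differently---it repeatedly bisects the interval $(v^M(\sigma),V^M(\sigma))$ for one sentence at a time rather than enumerating all rational thresholds $(\sigma,r)$---but this is only bookkeeping.

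One small technical point: in the metric setting $N$ is the \emph{completion} of $\bigcup_n M_n$, so your claim that every restricted $\la_N$-sentence already lies in some $\la_{M_n}$ is not literally true. The paper's sketch is equally brief here; the fix in both cases is to approximate the parameters of $\sigma$ by elements of some $M_n$ and invoke uniform continuity of forcing (the paper's Proposition~5.10) to conclude that $V^N(\sigma)-v^N(\sigma)$ is arbitrarily small.
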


\begin{proof}
Suppose that $\sigma$ is a restricted $\la_M$ sentence in prenex normal form.  We seek to find $N\in \k$ with $N\supseteq M$ such that $N$ is generic for $\sigma$.  Since any extension of $N$ in $\k$ remains generic for $\sigma$, we can iterate this process to find an extension of $M$ in $\k$ that is generic for every restricted $\la_M$ sentence in prenex normal form.  We can then iterate this procedure $\omega$ many times to get a generic extension of $M$. 

If $M$ is generic for $\sigma$, we do nothing.  Otherwise, we have $v^M(\sigma)<V^M(\sigma)$.  Note that $\sigma$ cannot be quantifier-free.  Suppose first that $\sigma=\inf_x\varphi(x)$.  Set $r$ to be the midpoint of $(v^M(\sigma),V^M(\sigma))$.  Since $M\nVdash \sigma\geq r$, we have $N_0\in \k$ with $M\subseteq N_0$ and $a\in N_0$ such that $N\Vdash \varphi(a)<r$.  It follows that $V^{N_0}(\sigma)-v^{N_0}(\sigma)\leq \frac{1}{2}(V^M(\sigma)-v^M(\sigma))$.  If $N_0$ is generic for $\sigma$, then we are done.  Otherwise, by the same argument, there is $N_1\in \sigma$ with $N_0\subseteq N_1$ and $V^{N_1}-v^{N_1}(\sigma)\leq \frac{1}{2}(V^{N_0}(\sigma)-v^{N_0}(\sigma))$.  If in this process we ever reach a generic for $\sigma$ extension of $M$, then we are done.  Otherwise, $N:=\overline{\bigcup_i N_i}$ is a generic for $\sigma$ extension of $M$.

Now suppose that $\sigma=\sup_x\varphi(x)$ and let $r$ be as in the previous paragraph.  Since $M\nVdash \sigma\leq r$, there is $N\in \k$ with $M\subseteq N$ and $a\in N$ such that $N\vDash \varphi(a)>r$.  Now proceed as in the previous paragraph.  
\end{proof}

The following characterization of infinitely generic structures relating forcing and truth is crucial.

\begin{prop}
Suppose that $M\in \k$.  Then $M$ is infinitely generic if and only if, for every restricted $\la_M$-sentence $\sigma$, every $r\in [0,1]$ and every $\bowtie \in \{<,\leq,>,\geq\}$, we have
$$M\Vdash \sigma\bowtie r\Leftrightarrow \sigma^M\bowtie r. \quad (\dagger)$$
\end{prop}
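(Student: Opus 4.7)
The plan is to establish the $(\Leftarrow)$ direction almost directly from the definition of $V^M$ and $v^M$, and to establish the harder $(\Rightarrow)$ direction by induction on the complexity of the prenex restricted $\la_M$-sentence $\sigma$.

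For the easy direction: if $(\dagger)$ holds, then $V^M(\sigma)=\inf\{r : M\Vdash \sigma<r\}=\inf\{r : \sigma^M<r\}=\sigma^M$, and, dually, $v^M(\sigma)=\sup\{r : M\Vdash \sigma>r\}=\sup\{r : \sigma^M>r\}=\sigma^M$, so $v^M(\sigma)=V^M(\sigma)$ and $M$ is infinitely generic.

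For the converse, assume $M$ is infinitely generic and induct on the prenex complexity of $\sigma$. The quantifier-free case is immediate from the base clause of the forcing definition. For $\sigma=\inf_x\varphi(x)$, I would first compute $V^M(\sigma)$ from below: $M\Vdash \sigma<r$ iff there exists $a\in M$ with $M\Vdash \varphi(a)<r$; by the inductive hypothesis applied to the simpler $\la_M$-sentence $\varphi(a)$, this is equivalent to the existence of $a\in M$ with $\varphi(a)^M<r$, i.e.\ $\sigma^M<r$. Hence $V^M(\sigma)=\sigma^M$. Because $M$ is infinitely generic, $v^M(\sigma)=V^M(\sigma)=\sigma^M$, and from this one reads off the four equivalences for $\bowtie\in\{<,\le,>,\ge\}$ using the respective infima/suprema in the definitions of $V^M$ and $v^M$ (and the ``$\le$ iff $<r'$ for all $r'>r$'' type clauses). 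The case $\sigma=\sup_x\varphi(x)$ is dual: here it is $v^M(\sigma)$ that is computed directly as $\sup\{r : \exists a\in M, M\Vdash \varphi(a)>r\}=\sup_{a\in M}\varphi(a)^M=\sigma^M$ using the inductive hypothesis, and then $V^M(\sigma)=v^M(\sigma)=\sigma^M$ by infinite genericity.

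The key conceptual point, and the main obstacle to overcome, is that only one of the two forcing values has a clean ``internal'' description in terms of $M$ alone (the one corresponding to the existential clause: $\inf_x\varphi(x)<r$ or $\sup_x\varphi(x)>r$); the other is defined using arbitrary extensions $N\supseteq M$ in $\k$ and hence is not directly computable from $M$. Infinite genericity is exactly the hypothesis needed to collapse the ``external'' forcing value onto the ``internal'' one, allowing the induction to go through without having to analyze all extensions $N$ of $M$. Once this collapse is in place, the four equivalences of $(\dagger)$ follow uniformly from the already-established identity $v^M(\sigma)=V^M(\sigma)=\sigma^M$.
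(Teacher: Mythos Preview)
Your proposal is correct and follows essentially the same approach as the paper: both handle $(\Leftarrow)$ by computing $V^M$ and $v^M$ directly from $(\dagger)$, and both handle $(\Rightarrow)$ by induction on the complexity of $\sigma$, verifying the $<$ equivalence for $\sigma=\inf_x\varphi(x)$ (resp.\ the $>$ equivalence for $\sup_x\varphi(x)$) straight from the inductive hypothesis and then invoking genericity to obtain the remaining cases. One small caution: your claim that the $\geq$ equivalence can simply be ``read off'' from $v^M(\sigma)=\sigma^M$ requires, at the boundary value $r=v^M(\sigma)$, the fact that $\{r:M\Vdash\sigma\geq r\}$ is closed (equivalently, that $\{s:N\Vdash\varphi(b)<s\}$ is open for every $N\in\k$ and $b\in N$); this is an easy separate induction on $\varphi$ that the paper's argument also tacitly uses when it infers $v^M(\sigma)<r$ from $M\nVdash\sigma\geq r$.
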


\begin{proof}
We treat the ``if'' direction first.  Fix a restricted $\la_M$-sentence $\sigma$ and suppose, towards a contradiction, that $v^M(\sigma)<V^M(\sigma)$.  Fix $r\in (v^M(\sigma),V^M(\sigma))$.  Since $M\nVdash \sigma\geq r$, by $(\dagger)$, we have $\sigma^M<r$.  By $(\dagger)$ again, we see that $M\Vdash \sigma<r$, contradicting $r<V^M(\sigma)$.

We now prove the ``only if'' direction by induction on complexity of $\sigma$.  As usual, the quantifier-free case is trivial and we only treat the case $\sigma=\inf_x\varphi(x)$.  The equivalence in $(\dagger)$ is clear when $\bowtie\in \{<,\leq\}$.  To finish, it suffices to prove that $(\dagger)$ holds for $\bowtie$ equalling $\geq$.  Suppose that $M\Vdash \sigma \geq r$ and yet $\sigma^M<r$.  Then by induction we have that $M\Vdash \varphi(a)<r$ for some $a\in M$, a contradiction.  If $M\nVdash \sigma\geq r$, then $V^M(\sigma)=v^M(\sigma)<r$, whence $M\Vdash \sigma<r$ and hence $\sigma^M<r$.
\end{proof}

Let $\G$ denote the collection of infinitely generic members of $\k$.

\begin{cor}\label{forcetruth}
If $M\in \G$, then for every restricted $\la_M$ sentence $\sigma$, we have $v^M(\sigma)=V^M(\sigma)=\sigma^M$. 
\end{cor}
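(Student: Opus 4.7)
The plan is to simply unfold the definitions of $v^M(\sigma)$ and $V^M(\sigma)$ and apply the preceding proposition. Since $M \in \G$ by assumption, the definition of infinitely generic already yields $v^M(\sigma) = V^M(\sigma)$, so the only real content to establish is that this common value coincides with $\sigma^M$.

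First, I would recall the two formulas for the forcing values:
\[
V^M(\sigma) = \inf\{r \in [0,1] : M \Vdash \sigma \leq r\}, \qquad v^M(\sigma) = \sup\{r \in [0,1] : M \Vdash \sigma \geq r\}.
\]
Then I would invoke the equivalence $(\dagger)$ from the preceding proposition, which guarantees, for every $r \in [0,1]$, that $M \Vdash \sigma \leq r$ iff $\sigma^M \leq r$, and $M \Vdash \sigma \geq r$ iff $\sigma^M \geq r$. Substituting these equivalences into the displayed formulas gives
\[
V^M(\sigma) = \inf\{r \in [0,1] : \sigma^M \leq r\} = \sigma^M
\]
and
\[
v^M(\sigma) = \sup\{r \in [0,1] : \sigma^M \geq r\} = \sigma^M,
\]
which is exactly the claim.

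There is essentially no obstacle here: the corollary is a formal consequence of the preceding equivalence of forcing and truth. The only point to double-check is that the two natural descriptions of $V^M(\sigma)$ (using $<$ versus $\leq$) really do agree for the value $r = \sigma^M$, but this is handled by the monotonicity lemma together with the density of reals in $[0,1]$, and it was already folded into the definition of $V^M$ and $v^M$ earlier.
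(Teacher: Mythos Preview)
Your proposal is correct and follows exactly the intended approach: the paper gives no separate proof for this corollary, treating it as an immediate consequence of the preceding proposition's equivalence $(\dagger)$ between forcing and truth, which is precisely what you unfold.
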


\begin{prop}
If $M,N\in \G$ and $M\subseteq N$, then $M\preceq N$.
\end{prop}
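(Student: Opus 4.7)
The plan is to combine Lemma \ref{compat} with Corollary \ref{forcetruth} in a direct squeeze argument. Fix a restricted $\la_M$-sentence $\sigma$ in prenex normal form. Since $M \subseteq N$, Lemma \ref{compat} yields
$$v^M(\sigma) \leq v^N(\sigma) \leq V^N(\sigma) \leq V^M(\sigma).$$
Because $M \in \G$, Corollary \ref{forcetruth} gives $v^M(\sigma) = V^M(\sigma) = \sigma^M$, so the chain collapses and in particular $v^N(\sigma) = V^N(\sigma)$ both equal $\sigma^M$. But $N \in \G$ as well, so by the same corollary $v^N(\sigma) = V^N(\sigma) = \sigma^N$. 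Therefore $\sigma^M = \sigma^N$.

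To promote this to the full elementarity statement $\varphi(\vec a)^M = \varphi(\vec a)^N$ for every $\la$-formula $\varphi(\vec x)$ and tuple $\vec a$ from $M$, I would invoke two standard reductions from continuous logic (see \cite{BBHU}): first, every formula is logically equivalent to one in prenex normal form, and second, the restricted formulas are uniformly dense in all formulas. The first reduction lets us assume $\varphi$ is in prenex normal form; the second, combined with uniform continuity, lets us approximate $\varphi(\vec a)$ by restricted sentences and pass to the limit in the equality $\sigma^M = \sigma^N$ already established.

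There is not really a main obstacle here: the bulk of the work has already been done in setting up $v^M(\sigma)$, $V^M(\sigma)$, and their behavior under extensions. The only point requiring even mild care is the reduction to restricted prenex-normal sentences, which is where the hypothesis that we are working in bounded continuous logic (so that the approximation is uniform) is used implicitly. Apart from that, the statement is essentially an immediate consequence of the definition of $\G$ and the monotonicity of the forcing values along extensions.
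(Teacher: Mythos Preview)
Your proof is correct and follows exactly the same approach as the paper's own proof: apply Lemma \ref{compat} and Corollary \ref{forcetruth} to restricted $\la_M$-sentences in prenex normal form to get $\sigma^M=\sigma^N$, then invoke the density of restricted formulae. The paper's version is simply more terse, compressing your squeeze argument into a single sentence.
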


\begin{proof}
If $\sigma$ is a restricted $\la_M$ sentence in prenex normal form, then $\sigma^M=\sigma^N$ by Lemma \ref{compat} and Corollary \ref{forcetruth}.  It remains to notice that the restricted formulae are dense in the set of all formulae.
\end{proof}


\begin{prop}
If $M\in \G$, then $M$ is e.c. for $\k$.
\end{prop}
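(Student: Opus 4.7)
The plan is to unfold the definition of existentially closed directly in terms of forcing and then invoke Corollary \ref{forcetruth}. Given a quantifier-free formula $\varphi(x,y)$, a tuple $a$ from $M$, and $N\in\k$ with $M\subseteq N$, I would set $\sigma:=\inf_x\varphi(x,a)$, which is a restricted $\la_M$-sentence already in prenex normal form. It suffices to prove $\sigma^M\leq\sigma^N$, since the reverse inequality is immediate from $M\subseteq N$.

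Because $M$ is infinitely generic, Corollary \ref{forcetruth} gives $\sigma^M=v^M(\sigma)$. So for every $r<\sigma^M$ one has $M\Vdash\sigma\geq r$. Unwinding the definition of forcing for an $\inf$-quantifier, this means there is no $N'\in\k$ extending $M$ and no $b\in N'$ with $N'\Vdash\varphi(b,a)<r$. Since $\varphi$ is quantifier-free, the forcing relation $N'\Vdash\varphi(b,a)<r$ is literally the truth statement $\varphi(b,a)^{N'}<r$, so the quantifier-free base case of the forcing definition collapses to ordinary truth.

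To finish, suppose for contradiction that $\sigma^N<\sigma^M$ and pick $r$ strictly between them. By definition of $\sigma^N$ as an infimum, some $b\in N$ satisfies $\varphi(b,a)^N<r$; taking $N'=N$ in the previous paragraph yields an immediate contradiction with $M\Vdash \sigma\geq r$. Hence $\sigma^M\leq\sigma^N$, as required.

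The only subtle point is the interplay between the two formulations: the generic hypothesis on $M$ (phrased through $v^M$, and hence implicitly quantifying over \emph{all} extensions of $M$ in $\k$) is precisely what converts into a statement about every $N\supseteq M$. Apart from that bookkeeping, the quantifier-free base case of forcing does all the work and no induction on formula complexity is needed; in particular we never invoke the full force-vs-truth equivalence, only the clause for $\Vdash\sigma\geq r$ with $\sigma$ an $\inf$-sentence.
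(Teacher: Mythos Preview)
Your argument is correct, and it differs from the paper's. The paper does not unwind the forcing definition at all: given $M\subseteq N$ with $M$ generic, it takes a generic $N'\supseteq N$ (Proposition~\ref{contain}), then uses the just-proved model-completeness of $\G$ to get $M\preceq N'$, and sandwiches $(\inf_x\varphi(x,b))^N$ between the equal values in $M$ and $N'$. Your route instead reads off the e.c.\ property directly from the clause defining $M\Vdash\inf_x\varphi(x,a)\geq r$, combined with $v^M(\sigma)=\sigma^M$ from Corollary~\ref{forcetruth}; this avoids both Proposition~\ref{contain} and the model-completeness proposition, at the cost of touching the forcing definition explicitly. Either way is short; yours is more self-contained, the paper's keeps the forcing machinery black-boxed.

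One small slip: you assert that $\sigma=\inf_x\varphi(x,a)$ is a \emph{restricted} $\la_M$-sentence, but an arbitrary quantifier-free $\varphi$ need not be restricted (restricted formulae use only a fixed finite set of connectives). The forcing relation and Corollary~\ref{forcetruth} are only defined for restricted sentences, so you should, as the paper does, first reduce to restricted quantifier-free $\varphi$ by density and then run your argument. With that caveat the proof goes through as written.
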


\begin{proof}
It is enough to check the condition for the case that $\varphi$ is restricted quantifier-free.  In that case, suppose $b\in M$ and $N\in \k$ is such that $M\subseteq N$.  Take $N'\in \G$ such that $N\subseteq N'$.  Observe that $$(\inf_x \varphi(x,b))^{N'}\leq (\inf_x \varphi(x,b))^N\leq (\inf_x\varphi(x,b))^M.$$  However, by the preceding Proposition, $(\inf_x \varphi(x,b))^M=(\inf_x \varphi(x,b))^{N'}$, whence $(\inf_x \varphi(x,b))^M=(\inf_x \varphi(x,b))^N$.  
\end{proof}

\begin{prop}[Uniform Continuity of Forcing]
For any $\la$-formula $\sigma(x)$ and any $\epsilon>0$, there is $\delta>0$ such that, for any $M\in \k$ and any tuples $a,a'\in M$, if $d(a,a')<\delta$, then $|V^M(\sigma(a))-V^M(\sigma(a'))|,|v^M(\sigma(a))-v^M(\sigma(a'))|<\epsilon$.
\end{prop}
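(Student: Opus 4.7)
The plan is to prove the statement by induction on the complexity of $\sigma$ taken in restricted prenex normal form, proving the uniform continuity of $V^M$ and $v^M$ simultaneously with a single modulus $\delta$ that depends only on $\sigma$ and $\epsilon$ (and, in particular, not on $M$). The base case of a restricted quantifier-free $\sigma$ is immediate: there $V^M(\sigma(a)) = v^M(\sigma(a)) = \sigma(a)^M$, so the intrinsic modulus of uniform continuity attached to the formula $\sigma$ does the job.

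For the inductive step with $\sigma(y) = \inf_x \varphi(x,y)$, I would take $\delta := \Delta_\varphi(\epsilon/2)$, where $\Delta_\varphi$ is the $\k$-uniform modulus furnished by the inductive hypothesis. For the upper values, suppose $d(b,b') < \delta$ and $r > V^M(\sigma(b))$: one obtains $a \in M$ with $M \Vdash \varphi(a,b) < r$, so $V^M(\varphi(a,b)) \leq r$; the inductive hypothesis applied to the pairs $(a,b)$ and $(a,b')$ (which differ only in the $y$-coordinate) yields $V^M(\varphi(a,b')) < r + \epsilon/2$, and unwinding the definition of $V^M$ gives $M \Vdash \sigma(b') < r + \epsilon/2$, hence $V^M(\sigma(b')) \leq r + \epsilon/2$. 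Letting $r$ decrease to $V^M(\sigma(b))$ and symmetrizing delivers the bound on $|V^M(\sigma(b)) - V^M(\sigma(b'))|$.

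For the lower values, I would argue by contradiction: if $M \Vdash \sigma(b) \geq r$ while $M \nVdash \sigma(b') \geq r - \epsilon/2$, then the definition of forcing for $\inf_x$ supplies an extension $N \supseteq M$ in $\k$ and an $a \in N$ with $N \Vdash \varphi(a,b') < r - \epsilon/2$; applying the inductive hypothesis inside $N$ (with the same $\delta$, which is the crucial point) yields $V^N(\varphi(a,b)) < r$, whence $N \Vdash \varphi(a,b) < r$, directly contradicting $M \Vdash \sigma(b) \geq r$. Symmetry closes the case. The dual case $\sigma = \sup_x \varphi(x,y)$ proceeds in exactly the same way with the roles of upper and lower values exchanged.

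The main structural subtlety is the one flagged above: the lower-value step is forced to pass to an extension $N \supseteq M$, so it is essential that the inductive modulus be \emph{uniform} across every member of $\k$ — otherwise one could not guarantee that the modulus obtained for $M$ also governed forcing relations inside the witnessing extension $N$. The only other care required is with strict versus non-strict inequalities in the definitions of $V^M$ and $v^M$, and this is absorbed cleanly by the $\epsilon/2$ slack used throughout.
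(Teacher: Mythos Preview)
Your proposal is correct and follows essentially the same approach as the paper: induction on the complexity of $\sigma$, with the crucial observation that the inductive modulus must be uniform over all of $\k$ so that it can be applied inside the extension $N\supseteq M$ witnessing the failure of $M\Vdash \inf_x\varphi\geq r$. Your treatment is in fact more detailed than the paper's (which only writes out the $v^M$ argument for the $\inf$ case and declares the rest similar), and your $\epsilon/2$ bookkeeping handles the strict-versus-non-strict inequality issue more cleanly than the paper does.
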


\begin{proof}
By induction on the complexity of $\sigma$, the case of quantifier-free $\sigma$ being trivial.  Suppose that $\sigma(x)=\inf_y\varphi(x,y)$.  Let $\Delta^f_\varphi$ be a modulus of uniform continuity for forcing for $\varphi$.  Suppose that $M\in \k$, $a,a'\in M$ are within $\Delta^f_\varphi(\epsilon)$ and $M\nVdash \sigma(a)\geq r$.  Then there is $N\in \k$, $M\subseteq N$, and $b\in N$ such that $N\Vdash \varphi(a,b)<r$.  By definition of $\Delta^f_\varphi$, we have $N\Vdash \varphi(a',b)<r+\epsilon$, so $M\nVdash \sigma(a)\geq r+\epsilon$.  By symmetry, it follows that $|v^M(\sigma(a)-v^M(\sigma(a'))|<\epsilon$.  The other proofs are similar.    
\end{proof}

\begin{prop}
$\G$ is an inductive class.
\end{prop}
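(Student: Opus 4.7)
The plan is to take an arbitrary chain $(M_i)_{i\in I}$ in $\G$, let $M:=\overline{\bigcup_i M_i}$, and show $M\in \G$. Since $\k$ is assumed inductive, $M\in \k$, so the task reduces to verifying that $v^M(\sigma)=V^M(\sigma)$ for every restricted $\la_M$-sentence $\sigma$ in prenex normal form. The key tools available are Lemma \ref{compat} (monotonicity of forcing values under extension) and the Uniform Continuity of Forcing proposition just proved.

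The main step: let $\sigma(a)$ be a restricted $\la_M$-sentence with parameters $a$ drawn from $M$, and fix $\epsilon>0$. Choose $\delta>0$ from uniform continuity of forcing for $\sigma$ at tolerance $\epsilon$. Since $\bigcup_i M_i$ is dense in $M$, pick some index $i$ and a tuple $a'\in M_i$ with $d(a,a')<\delta$. By Lemma \ref{compat} applied to the inclusion $M_i\subseteq M$, we have
\[
v^{M_i}(\sigma(a'))\leq v^M(\sigma(a'))\leq V^M(\sigma(a'))\leq V^{M_i}(\sigma(a')).
\]
Because $M_i\in \G$, the outer terms are equal, hence $v^M(\sigma(a'))=V^M(\sigma(a'))$. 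Combining with uniform continuity,
\[
|V^M(\sigma(a))-v^M(\sigma(a))|\leq |V^M(\sigma(a))-V^M(\sigma(a'))|+|v^M(\sigma(a'))-v^M(\sigma(a))|<2\epsilon.
\]
Since $\epsilon>0$ was arbitrary, $V^M(\sigma(a))=v^M(\sigma(a))$, which is exactly what it means for $M$ to be infinitely generic at $\sigma$.

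I expect the routine point to be verifying the approximation clause: strictly speaking, we need to know that we may take $a'\in \bigcup_i M_i$ and still consider $\sigma(a')$ as a restricted $\la_{M_i}$-sentence, which is immediate since parameters from $M_i$ live in every larger structure in the chain. The genuinely substantive input is the Uniform Continuity of Forcing proposition, which handles the fact that the parameters $a$ may be genuine limits and need not lie in any single $M_i$; without that tool, one could not transfer the equality of forcing values from the approximating $M_i$ up to $M$. No separate argument is needed for the case of $\sigma$ without parameters, as that is the special case where the approximation step is trivial.
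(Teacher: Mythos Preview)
Your proof is correct and follows essentially the same approach as the paper's: approximate the parameters by a tuple $a'$ lying in some $M_i$, use uniform continuity of forcing to bound $V^M(\sigma(a))-v^M(\sigma(a))$ by $2\epsilon$ plus $V^M(\sigma(a'))-v^M(\sigma(a'))$, and observe that the latter vanishes because $M_i$ is generic. In fact you are slightly more explicit than the paper, spelling out via Lemma~\ref{compat} exactly why genericity of $M_i$ forces $V^M(\sigma(a'))=v^M(\sigma(a'))$, a step the paper leaves implicit.
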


\begin{proof}
Suppose that $(M_\alpha \  : \ \alpha<\lambda\}$ is a chain from $\G$ and $M=\overline{\bigcup_{\alpha<\lambda}M_\alpha}$.  Since $\k$ is inductive, we have $M\in \k$.  Now suppose that $\sigma(a)$ is a restricted $\la_M$ sentence.  Fix $\epsilon>0$.  Choose $\delta>0$ to witness uniform continuity of forcing for $\sigma(x)$ and $\epsilon$.  Take $\alpha<\lambda$ and $a'\in M_\alpha$ such that $d(a,a')<\delta$.  Then:
$$V^M(\sigma(a))-v^M(\sigma(a))\leq 2\epsilon+(V^M(\sigma(a'))-v^M(\sigma(a')))=2\epsilon,$$ where the last equality holds as $M_\alpha$ is generic for $\sigma(a')$.  Let $\epsilon$ go to $0$.
\end{proof}

\begin{prop}\label{char}
Suppose that $\C$ is a subclass of $\k$ such that:
\begin{itemize}
\item $\C$ is model-consistent with $\k$, and
\item $\C$ is model-complete.
\end{itemize}
Then $\C\subseteq \G$.
\end{prop}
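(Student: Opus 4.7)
Fix $M\in\C$; the goal is to show $M\in\G$, namely that $v^M(\sigma)=V^M(\sigma)$ for every restricted $\la_M$-sentence $\sigma$ in prenex normal form. The plan is to prove the slightly stronger statement $v^M(\sigma)=V^M(\sigma)=\sigma^M$ for every $M\in\C$ and every such $\sigma$, by induction on the number of quantifiers of $\sigma$. The quantifier-free base case is immediate from the definitions. For the inductive step, since the $\sup$-case is dual, assume $\sigma=\inf_x\varphi(x)$.

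The equality $V^M(\sigma)=\sigma^M$ drops out of the definitions: unwinding the clause defining $M\Vdash \sigma<r$, together with the inductive hypothesis applied to the sentences $\varphi(a)$ for $a\in M$, gives $V^M(\sigma)=\inf_{a\in M}\varphi(a)^M=\sigma^M$. The easy half $v^M(\sigma)\leq \sigma^M$ then comes by taking $N=M$ itself as the ``extension'' in the defining clause for $M\Vdash \sigma\geq r$: if $M\Vdash \sigma\geq r$, then no $a\in M$ can have $M\Vdash \varphi(a)<r$, whence by induction $\varphi(a)^M\geq r$ for all $a\in M$, giving $\sigma^M\geq r$.

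The substantive content is the reverse inequality $v^M(\sigma)\geq \sigma^M$, and this is where both hypotheses on $\C$ are used. Suppose for contradiction that $v^M(\sigma)<r<\sigma^M$ for some $r$. Then $M\nVdash \sigma\geq r$, so there exist $N\in\k$ with $M\subseteq N$ and $a\in N$ such that $N\Vdash \varphi(a)<r$. By model-consistency of $\C$ with $\k$, extend $N$ to some $N'\in\C$. Lemma~\ref{compat} applied to $\varphi(a)$ then gives $V^{N'}(\varphi(a))\leq V^N(\varphi(a))\leq r$, whence by the inductive hypothesis applied inside $N'\in\C$ we get $\varphi(a)^{N'}\leq r$, and hence $\sigma^{N'}\leq r$. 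Finally, model-completeness of $\C$ gives $M\preceq N'$, so $\sigma^M=\sigma^{N'}\leq r<\sigma^M$, a contradiction.

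The main obstacle is simply the bookkeeping: the inductive hypothesis must be invoked inside two different members of $\C$ (namely $M$ and $N'$), and one has to track how the forcing-level inequality $N\Vdash \varphi(a)<r$ transfers to $N'$ via the monotonicity in Lemma~\ref{compat}. The dual $\sup$-case runs identically, using the other monotonicity $v^N\leq v^{N'}$ of Lemma~\ref{compat} in place of $V^{N'}\leq V^N$, together with the symmetric clauses in the definition of forcing.
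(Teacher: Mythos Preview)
Your proof is correct and follows the same overall architecture as the paper's: induction on quantifier complexity, passing to an extension $N'\in\C$ via model-consistency, invoking the inductive hypothesis there to convert forcing to truth, and then pulling the truth value back to $M$ via elementarity. There are two places where your execution is actually cleaner than the paper's. First, you correctly state the inductive hypothesis as ranging over \emph{all} $M\in\C$; the paper ostensibly fixes a single $M_0$ but then tacitly applies the hypothesis to the extension $M_1\in\C$ (writing ``since $M_1$ is generic''), which only makes sense under your formulation. Second, and more substantively, to obtain $\sigma^{M_0}=\sigma^{M_1}$ the paper builds an infinite chain alternating between $\C$ and $\G$ and appeals to model-completeness of both classes, whereas you simply observe that $M,N'\in\C$ with $M\subseteq N'$ already gives $M\preceq N'$ by model-completeness of $\C$ alone. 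The chain construction is unnecessary for this argument, so your shortcut is a genuine simplification.
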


\begin{proof}
Fix $M_0\in \C$; we want $M_0\in \G$.  We prove by induction on complexity of $\sigma$ that $M_0$ is generic for $\sigma$.  Suppose first that $\sigma=\inf_x\varphi(x)$; we want $M_0$ generic for $\sigma$.  Suppose this is not the case and take $r\in(v^{M_0}(\sigma),V^{M_0}(\sigma))$.  Since $M_0\nVdash \sigma\geq r$, there is $N\in \k$ with $N\supseteq M_0$ and $a\in N$ such that $N\Vdash \varphi(a)<r$.  Let $M_1\in \C$ contain $N$, so $M_1\Vdash \varphi(a)<r$.  Since $M_1$ is generic, $\varphi^{M_1}(a)<r$, whence $\sigma^{M_1}<r$.  Let $M_2\in \G$ contain $M_1$ and $M_3\in \C$ contain $M_2$ and so on...  Let $M$ denote the union of the chain.  Since both $\C$ and $\G$ are model-complete classes, each $M_i$ is an elementary substructure of $M$.  In particular, $\sigma^{M_0}=\sigma^M=\sigma^{M_1}<r$, whence there is $b\in M_0$ such that $\varphi^{M_0}(b)<r$.  Since we already know that $M_0$ is generic for $\varphi(b)$, we have that $M_0\Vdash \varphi(b)<r$, whence $M_0\Vdash \sigma<r$, a contradiction.  The proof is similar for $\sigma=\sup_x\varphi(x)$.
\end{proof}

\begin{cor}
$\G$ is the unique maximal subclass of $\k$ that is model-consistent with $\k$ and model-complete.
\end{cor}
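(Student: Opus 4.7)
The plan is to observe that this corollary is essentially an immediate consequence of what has already been assembled in the section, so the work is just to package the pieces correctly.

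First, I would verify that $\G$ itself satisfies the two listed properties. Model-consistency of $\G$ with $\k$ is exactly the content of Proposition \ref{contain}, which produced, for each $M\in \k$, an infinitely generic extension $N\in \G$. Model-completeness of $\G$ is the content of the earlier proposition showing that if $M,N\in \G$ and $M\subseteq N$ then $M\preceq N$. So $\G$ is at least one example of a model-consistent, model-complete subclass of $\k$.

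Next, I would invoke Proposition \ref{char} directly: any subclass $\C\subseteq \k$ that is model-consistent with $\k$ and model-complete is contained in $\G$. This shows $\G$ is maximal among such subclasses, and, since any other maximal such subclass $\C$ would have to be contained in $\G$ and hence equal to it (by maximality of $\C$ together with the fact that $\G$ itself has the required properties), it is the unique maximal one.

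I do not foresee an obstacle here: no new ideas are needed beyond the juxtaposition of Proposition \ref{contain}, the model-completeness proposition for $\G$, and Proposition \ref{char}. The only thing to be a little careful about is the logical form of ``unique maximal''—one must make explicit that being contained in $\G$ (Proposition \ref{char}) together with $\G$ itself satisfying the properties forces any maximal such $\C$ to coincide with $\G$, rather than merely being bounded above by it.
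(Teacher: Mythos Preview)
Your proposal is correct and matches the paper's intended argument exactly: the corollary is stated without proof in the paper, being an immediate consequence of Proposition \ref{contain}, the model-completeness of $\G$, and Proposition \ref{char}, precisely as you outline. Your care about the logical form of ``unique maximal'' is appropriate but not strictly necessary here, since once every such $\C$ is contained in $\G$ and $\G$ itself has the properties, $\G$ is automatically the unique maximum (not just a maximal element) of the family of such subclasses.
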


\begin{prop}
Suppose that $M\in \k$ is such that $M\preceq M'$ for all $M'\in \G$ with $M\subseteq M'$.  Then $M\in\G$.
\end{prop}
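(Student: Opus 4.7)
The plan is to induct on the complexity of a restricted $\la_M$-sentence $\sigma$ in prenex normal form, proving the stronger statement $v^M(\sigma)=V^M(\sigma)=\sigma^M$; by the definition of $\G$ this will give $M\in\G$. The quantifier-free base case is immediate from the definitions, and the two quantifier cases are dual, so I will describe the case $\sigma=\inf_x\varphi(x)$ in detail.

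For the upper estimate $V^M(\sigma)\le\sigma^M$, given any $r>\sigma^M$ I will choose $a\in M$ with $\varphi^M(a)<r$, invoke the inductive hypothesis on the lower-complexity sentence $\varphi(a)$ to get $V^M(\varphi(a))=\varphi^M(a)<r$, and deduce $M\Vdash\varphi(a)<r$; this yields $M\Vdash\sigma<r$ and hence $V^M(\sigma)\le r$. This direction uses only the inductive hypothesis, not the special hypothesis on $M$.

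The lower estimate $v^M(\sigma)\ge\sigma^M$ is where the hypothesis of the proposition enters, and I expect it to be the only step requiring real thought. I will argue by contradiction: suppose $r<\sigma^M$ but $M\nVdash\sigma\ge r$. Then by the forcing definition there exist $N\in\k$ with $M\subseteq N$ and $a\in N$ such that $N\Vdash\varphi(a)<r$. Using Proposition \ref{contain}, extend $N$ to some $N'\in\G$; Lemma \ref{compat} then gives $V^{N'}(\varphi(a))\le V^N(\varphi(a))\le r$, and Corollary \ref{forcetruth} converts this into $\varphi^{N'}(a)\le r$, so $\sigma^{N'}\le r$. Since $N'\in\G$ and $M\subseteq N'$, the hypothesis of the proposition gives $M\preceq N'$, and in particular $\sigma^M=\sigma^{N'}\le r$, contradicting $r<\sigma^M$. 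The dual case $\sigma=\sup_x\varphi(x)$ is handled by interchanging $V$ with $v$ and reversing inequalities throughout.

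The main conceptual move is the detour through a generic extension $N'$ of the forcing witness $N$: this is precisely how the assumption ``$M\preceq M'$ for every generic $M'\supseteq M$'' is leveraged, converting a forcing statement inside $N$ into an honest value-comparison inside $N'$ and then transferring it back to $M$ by elementarity. The remainder of the argument is routine manipulation of the forcing definitions together with Lemma \ref{compat} and Corollary \ref{forcetruth}.
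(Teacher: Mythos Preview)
Your argument is correct, but it takes a different route from the paper. The paper gives a one-line proof: apply Proposition~\ref{char} to the class $\C:=\G\cup\{M\}$. Model-consistency of $\C$ with $\k$ is inherited from $\G$, and model-completeness of $\C$ follows since (i) $\G$ is already model-complete, (ii) $M\preceq M'$ for $M'\in\G$ with $M\subseteq M'$ is exactly the hypothesis, and (iii) if $A\in\G$ with $A\subseteq M$, one picks $M'\in\G$ containing $M$ and uses $A\preceq M'$ together with $M\preceq M'$ to conclude $A\preceq M$. Then $\C\subseteq\G$ forces $M\in\G$.

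Your approach instead carries out a direct induction on the complexity of $\sigma$, in the spirit of the proof of Proposition~\ref{Rgeneric}, and uses the hypothesis on $M$ exactly at the point where the proof of Proposition~\ref{Rgeneric} uses the special embedding property of $\R$ into $\R^\u$. The trade-off is clear: the paper's proof is shorter and more conceptual, showing the result as an immediate corollary of the maximality characterization of $\G$; your proof is self-contained and does not depend on Proposition~\ref{char}, at the price of reproducing machinery that Proposition~\ref{char} already packages.
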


\begin{proof}
This follows from the Proposition \ref{char} by considering the class $\C:=\G\cup\{M\}$.
\end{proof}

\begin{prop}
Suppose that $\k=\operatorname{Mod}(T)$ for some $\forall\exists$-axiomatizable theory $T$.  Suppose that $M\in \k$ and $M'\in \G$ are such that $M\preceq M'$.  Then $M\in \G$.
\end{prop}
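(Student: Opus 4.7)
The plan is to invoke the previous proposition: it suffices to show that $M \preceq M''$ for every $M'' \in \G$ with $M \subseteq M''$.  So fix such an $M''$, and the goal becomes constructing some $\tilde{N}^* \in \G$ that simultaneously contains $M'$ and $M''$ as substructures over the common $M$.  Once this is in hand, model-completeness of $\G$ forces $M' \preceq \tilde{N}^*$ and $M'' \preceq \tilde{N}^*$, and combined with the hypothesis $M \preceq M'$ this yields $\varphi^M = \varphi^{\tilde{N}^*} = \varphi^{M''}$ for every $\la_M$-formula $\varphi$, giving $M \preceq M''$ as required.

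For the amalgamation, I would apply continuous-logic compactness to
$$\Sigma := T \cup \operatorname{ed}(M'') \cup D^+(M'),$$
formulated with a common set of constants for the elements of $M$ (shared between $M'$ and $M''$).  The $\operatorname{ed}(M'')$ and $T$ parts of any finite subset are trivially witnessed by $M''$ itself, so the crucial step is to show that a finite $D^+(M')$-fragment, say $\{\psi_j(\vec f, \vec c_j) \leq 1/k_j\}_{j\leq n}$ with $\psi_j$ quantifier-free and nonnegative, $\vec c_j \in M$, $\vec f \in M'$, and $\psi_j(\vec f, \vec c_j)^{M'} = 0$, is satisfiable by $M''$ after a suitable reinterpretation of the $\vec f$-constants.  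Setting $\chi(\vec x) := \max_j \psi_j(\vec x, \vec c_j)$, we have $\chi(\vec f)^{M'} = 0$, whence $(\inf_{\vec x} \chi)^{M'} = 0$.  The hypothesis $M \preceq M'$ does the essential work here: since the parameters $\vec c_j$ all lie in $M$, elementarity yields $(\inf_{\vec x} \chi)^M = 0$; then $M \subseteq M''$ together with nonnegativity of $\chi$ forces $(\inf_{\vec x} \chi)^{M''} = 0$.  Hence for any $\epsilon < \min_j(1/k_j)$ one finds $\vec f^* \in M''$ with $\psi_j(\vec f^*, \vec c_j) < \epsilon < 1/k_j$ for every $j$, so $M''$ (with $\vec f$ reinterpreted as $\vec f^*$) satisfies the finite subset exactly.

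By compactness, $\Sigma$ has a model $\tilde N$, which lies in $\k$ because $\tilde N \models T$; inside $\tilde N$, $M''$ is an elementary substructure and $M'$ is an isometric substructure, with the two embeddings agreeing on $M$.  Using Proposition \ref{contain}, extend $\tilde N$ to some $\tilde{N}^* \in \G$, and the plan from the first paragraph finishes the argument.  The hard part is certainly this consistency step, and it is worth emphasizing that the only use of the hypothesis $M \preceq M'$ (as opposed to mere $M \subseteq M'$) is in passing from $(\inf_{\vec x}\chi)^{M'} = 0$ to $(\inf_{\vec x}\chi)^M = 0$---a single but indispensable application of elementarity that transfers the approximate realizability of the quantifier-free type of a tuple from $M'$ over $M$ down to $M$ itself, and hence, by inclusion, over to $M''$.
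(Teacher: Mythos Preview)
Your proof is correct and follows the same strategy as the paper: reduce via the previous proposition to showing $M \preceq M''$ for every $M'' \in \G$ containing $M$, amalgamate $M'$ and $M''$ over $M$ inside $\k$, extend to an element of $\G$, and finish by model-completeness of $\G$ together with $M \preceq M'$. The only difference is cosmetic: you carry out the amalgamation by an explicit compactness argument, whereas the paper simply observes that $M$, being an elementary substructure of the e.c.\ model $M'\in\G$, is itself e.c., and then invokes the result of Section~\ref{amalg} that e.c.\ models are amalgamation bases. Your compactness argument is in effect a self-contained reproof of the relevant piece of that result, and your closing remark is exactly right: only existential closedness of $M$ in $M'$ is used, not full elementarity.
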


\begin{proof}
By the previous proposition, it is enough to show that if $N\in \G$ also contains $M$, then $M\preceq N$.  Since $M$ is an e.c. model of $T$ (being an elementary substructure of an e.c. model of $T$), we can find $N'\in \k$ which amalgamates $M'$ and $N$ over $M$.  Fix $N''\in \G$ extending $N'$.  Then for any $\la_M$ sentence $\sigma$, we have $$\sigma^M=\sigma^{M'}=\sigma^{N''}=\sigma^N.\qedhere$$
\end{proof}

\begin{cor}
Suppose that $\k=\operatorname{Mod}(T)$ for some $\forall\exists$-axiomatizable theory $T$.  Then for every $M\in \k$, there is $N\in \G$ with $M\subseteq N$ and such that the density character of $N$ equals the density character of $M$. 
\end{cor}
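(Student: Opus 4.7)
The plan is to combine Proposition \ref{contain} with the previous proposition and Downward L\"owenheim--Skolem. Given $M\in\k$, I would first apply Proposition \ref{contain} to obtain some $N_0\in\G$ with $M\subseteq N_0$. The density character of $N_0$ produced by that construction may be much larger than that of $M$, so a separate step is needed to cut it down.

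Next, I would invoke Downward L\"owenheim--Skolem (in its continuous version, as in \cite[Section 7]{BBHU}) to produce an elementary substructure $N\preceq N_0$ with $M\subseteq N$ and with density character equal to that of $M$; this is possible by choosing a dense subset of $M$ of minimum cardinality and closing it up to an elementary substructure, which only adds a bounded amount relative to the density character of $M$ and the cardinality of $\la$. Since $T$ is axiomatizable and $N\preceq N_0\models T$, we have $N\in\k$.

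Finally, since $N\preceq N_0$ and $N_0\in\G$, the preceding proposition (which is where the $\forall\exists$-axiomatizability of $T$ is used) applies to give $N\in\G$. The density character of $N$ is at least that of $M$ because $M\subseteq N$, and at most that of $M$ by the L\"owenheim--Skolem step, so equality holds.

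The main (only) subtlety is ensuring that Downward L\"owenheim--Skolem indeed gives an intermediate elementary substructure without enlarging the density character, and that this intermediate structure is eligible for the preceding proposition; both are routine once one has the previous proposition in hand, so in effect this corollary is a packaging of the preceding two results with Downward L\"owenheim--Skolem.
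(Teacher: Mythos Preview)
Your proof is correct and follows exactly the same route as the paper: apply Proposition~\ref{contain} to get a generic extension, cut down with Downward L\"owenheim--Skolem to an intermediate elementary substructure containing $M$, and then invoke the preceding proposition to conclude this intermediate structure is itself infinitely generic. The paper's proof is just the one-line ``immediate from the previous proposition, Proposition~\ref{contain}, and Downward L\"owenheim--Skolem,'' which you have unpacked accurately.
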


\begin{proof}
This is immediately from the previous proposition, Proposition \ref{contain}, and Downward L\"owenheim-Skolem.
\end{proof}

%
%

Let $T^g:=\Th(\G)$.  We call $T^g$ the \emph{forcing companion} for $T$.  The name is a good one:

\begin{prop}
$T^g$ is a companion operator.
\end{prop}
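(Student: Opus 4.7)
The plan is to verify each of the three defining conditions of a companion operator. First, I fix conventions: the underlying inductive class will be taken to be $\k := \operatorname{Mod}(T_\forall)$, with respect to which the class $\G$ of infinitely generic structures and the forcing companion $T^g := \Th(\G)$ are computed. This choice immediately delivers clause (2), since two theories with the same universal theory produce the same class $\k$, hence the same $\G$ and the same $T^g$.

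Clause (1), the equality $(T^g)_\forall = T_\forall$, is a matter of combining two easy observations. The inclusion $T_\forall \subseteq (T^g)_\forall$ holds because $\G \subseteq \k = \operatorname{Mod}(T_\forall)$, so every universal condition in $T_\forall$ is satisfied throughout $\G$ and therefore belongs to $T^g$. For the reverse inclusion, I take a universal sentence $\sigma = \sup_{\vec x}\varphi(\vec x)$ with $\sigma^M = 0$ for all $M \in \G$ and any $N \in \operatorname{Mod}(T_\forall)$. By Proposition \ref{contain}, $N$ extends to some $M \in \G$, and since the value of a $\sup$-sentence with quantifier-free kernel can only increase on passage to a larger structure, $\sigma^N \leq \sigma^M = 0$.

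Clause (3) is where the real content lies, and I expect it to require the most care. Suppose $\sigma = \sup_{\vec x}\inf_{\vec y}\varphi(\vec x, \vec y) = 0$ lies in $T_{\forall\exists}$, and fix $M \in \G$. Since $M \models T_\forall$, there is some $N \models T$ containing $M$ as a substructure, by the standard fact (recalled in the introduction) that models of $T_\forall$ embed into models of $T$. The key input from this section is that every infinitely generic structure is existentially closed for $\k$; since $N \in \k$, $M$ is existentially closed in $N$. For every tuple $\vec a$ from $M$, this yields
$$(\inf_{\vec y}\varphi(\vec a,\vec y))^M \;=\; (\inf_{\vec y}\varphi(\vec a,\vec y))^N \;\leq\; \sigma^N \;=\; 0.$$
Taking the supremum over $\vec a \in M$ gives $\sigma^M \leq 0$, and nonnegativity of $\sigma$ forces $\sigma^M = 0$. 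Since $M \in \G$ was arbitrary, $\sigma = 0 \in T^g$.

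The main obstacle is clause (3), which rests on the nontrivial structural fact that infinitely generic members of $\k$ are existentially closed, proved earlier in the section. Once that is in hand, clauses (1) and (2) reduce to bookkeeping around the model-consistency of $\G$ with $\k$ supplied by Proposition \ref{contain}.
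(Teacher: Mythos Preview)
Your proof is correct and follows the same approach as the paper's one-line proof (``Immediate from the fact that every element of $\G$ is existentially closed''), spelling out the details that the paper leaves to the reader. You correctly identify that clause (3) is where the e.c.\ property of infinitely generic structures does the work, while clauses (1) and (2) rest on model-consistency of $\G$ with $\k$ (Proposition \ref{contain}) and on the construction depending only on $T_\forall$.
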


\begin{proof}
Immediate from the fact that every element of $\G$ is existentially closed.
\end{proof}

Recall that a theory $T$ has the \emph{joint embedding property} (JEP) if any two models of $T$ can be simultaneously embedded into a third model of $T$.  If $T$ is complete, then $T$ has JEP.

\begin{lemma}
$T^g$ is complete if and only if $T$ has JEP.
\end{lemma}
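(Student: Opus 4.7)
The plan is to prove both directions using the key structural properties of $\G$: that every model of $T_\forall$ extends to a member of $\G$ (Proposition \ref{contain}), and that $\G$ is model-complete (so any inclusion between members of $\G$ is elementary).

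For the forward direction, assume $T^g$ is complete, and suppose $M_1, M_2 \models T$. Since $\k = \operatorname{Mod}(T_\forall)$ contains each $M_i$, I would apply Proposition \ref{contain} to extend each $M_i$ to some $N_i \in \G$. Then $N_1, N_2 \models T^g$, and by completeness $N_1 \equiv N_2$. A standard compactness / continuous Keisler-Shelah argument then produces a common elementary extension $P$ of $N_1$ and $N_2$. Since $P \models T^g$ and $(T^g)_\forall = T_\forall$ by the companion operator property, $P$ embeds into a model $M \models T$, and then $M_1, M_2$ both embed into $M$, establishing JEP for $T$.

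For the reverse direction, assume $T$ has JEP, and take arbitrary $N_1, N_2 \in \G$; I need to show $N_1 \equiv N_2$. Each $N_i$ is a model of $T_\forall$, hence embeds into some $M_i \models T$. By JEP for $T$, there is $M \models T$ with embeddings from $M_1$ and $M_2$, so both $N_i$ embed into $M$. Applying Proposition \ref{contain} to $M$ yields $N \in \G$ with $M \subseteq N$, and thus both $N_i$ embed into $N$. The images of $N_1$ and $N_2$ in $N$ are isomorphic copies of $N_i$ — and since infinite genericity is preserved under isomorphism (the forcing relation depends only on isomorphism type), these images lie in $\G$. They are substructures of $N \in \G$, so by the model-completeness of $\G$ they are elementary substructures, giving $N_1 \equiv N \equiv N_2$.

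I expect the main subtle point to be in the forward direction: one has to move from $N_1 \equiv N_2$ to having a common extension inside $\operatorname{Mod}(T)$, which requires both the Keisler-Shelah-type fact for continuous logic and the use of $(T^g)_\forall = T_\forall$ to pull a model of $T^g$ back into $\operatorname{Mod}(T_\forall)$ and then into a model of $T$. In the reverse direction, the only point to be careful about is that infinite genericity is indeed an isomorphism invariant, so that the images of $N_1, N_2$ in the common extension actually lie in $\G$ and therefore inherit elementarity from the model-completeness of $\G$.
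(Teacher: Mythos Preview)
Your proof is correct and follows essentially the same route as the paper. The paper's forward direction is more compressed: it simply notes that a complete theory has JEP and that JEP depends only on the universal part of a theory, so $(T^g)_\forall = T_\forall$ immediately transfers JEP from $T^g$ to $T$; your argument unfolds exactly these two facts. Your reverse direction is virtually identical to the paper's (embed two generics into a common model, extend to a generic, and apply model-completeness of $\G$).
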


\begin{proof}
If $T^g$ is complete, then $T^g$ has JEP; since $T$ and $T^g$ have the same universal theories, it follows that $T$ has JEP.

Conversely, suppose that $T$ has JEP (whence it follows that $T^g$ has JEP); we must show that $T^g$ is complete.  Suppose $\m,\N\in \G$.  Let $\A\models T^g$ be such that $\m,\N$ both embed into $\A$.  Let $\A_1\in \G$ be such that $\A$ embeds into $\A_1$.  From model-completeness of $\G$, we see that $\m,\N\preceq \A_1$, whence $\m\equiv \N$.  It follows that $T^g$ is complete.
\end{proof}

%
%

We are now ready to show that the infinite forcing companion of $\Th_\forall(\R)$ is $\Th(\R)$.
\begin{prop}\label{Rgeneric}
$\R$ is infinitely generic.
\end{prop}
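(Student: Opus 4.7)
The plan is to invoke the proposition from this section asserting that $M\in\G$ whenever $M\preceq M'$ for every $M'\in\G$ with $M\subseteq M'$. Specialized to $M=\R$, it suffices to show that $\R\preceq M'$ for every $M'\in\G$ containing $\R$. So fix such an $M'$. Since $M'\in\G$ is a model of $\Th_\forall(\R)$, pick any embedding $j\colon M'\to\R^\u$ into an ultrapower of $\R$; the composition with the inclusion $i\colon\R\hookrightarrow M'$ is elementary by the cited Jung-type fact that every embedding $\R\to\R^\u$ is elementary. In particular, $\R$ is existentially closed in $M'$ by Lemma \ref{Rec}, but we need to upgrade this to full elementarity.

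To deduce $\R\preceq M'$, I would construct an ambient $P\in\k$ that is \emph{simultaneously} an elementary extension of $M'$ and of some elementary extension of $\R$; then $\sigma^\R=\sigma^P=\sigma^{M'}$ for every $\la_\R$-sentence $\sigma$, yielding the desired elementarity. The construction uses the strong amalgamation property of e.c.\ models proved in Section \ref{amalg}: since $\R$ is an e.c.\ model of $\Th_\forall(\R)$ by Lemma \ref{Rec}, it is a strong amalgamation base in $\k$. First amalgamate $M'$ and $\R^\u$ over $\R$ inside $\k$ to produce $P_0\in\k$, choosing (via the remark following the amalgamation proposition) the amalgam so that $M'\preceq P_0$. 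To also arrange that an elementary extension of $\R$ sits elementarily in the final $P$, iterate in a back-and-forth fashion: at each stage absorb a further ultrapower of $\R$ via amalgamation over the previously used ultrapower (which is itself an amalgamation base in $\k$, being e.c.\ by $\omega_1$-saturation), alternating which side the amalgam is elementary over, and take the colimit. A suitably interleaved chain produces $P\in\k$ with both $M'\preceq P$ and $\R\preceq\R^\u\preceq P$.

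The main obstacle is executing the back-and-forth so that both elementarity conditions survive in the limit: at each stage the amalgam can only be chosen elementary over one of the two previous structures, so one must interleave the choices and verify that the resulting chain is an elementary chain on each side. Once such a $P$ is constructed, the inclusion $\R\preceq P\succeq M'$ forces $\R\preceq M'$, and the proposition invoked in the first paragraph then yields $\R\in\G$.
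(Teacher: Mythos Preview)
Your reduction to the criterion ``$M\preceq M'$ for all $M'\in\G$ containing $M$'' is sound, and the first paragraph correctly shows $\R$ is existentially closed in any such $M'$. The gap is in the second paragraph. The claim that $\R^\u$ is an amalgamation base ``being e.c.\ by $\omega_1$-saturation'' is wrong on two counts: $\omega_1$-saturation does not imply existential closedness, and in fact $\R^\u$ is \emph{not} e.c.\ --- Section~\ref{auto} of this very paper produces (under CH, hence by absoluteness in ZFC for the relevant arithmetical content) embeddings $\R^\u\to\R^\u$ that are not even existential. Whether every model of $\Th_\forall(\R)$, in particular $\R^\u$, is an amalgamation base is posed as an open question at the end of Section~\ref{amalg}. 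So your back-and-forth cannot amalgamate over the intermediate ultrapowers as written.

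Even granting amalgamation, the interleaving does not obviously deliver $\R\preceq P$. To run a sandwich argument of the Proposition~\ref{char} type you need the $\R$-side structures to form an \emph{elementary chain}; but the inclusions $\R^{\u_i}\subseteq\R^{\u_{i+1}}$ arising from your construction are not elementary (again by the non-model-completeness results of Section~\ref{auto}). Knowing only $\R\preceq\R^{\u_i}$ for each $i$ separately, with the $\R^{\u_i}$ cofinal in $P$, does not yield $\R\preceq P$. You flag this as ``the main obstacle,'' and it is a genuine one that the proposal does not overcome.

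The paper sidesteps all of this by proving directly, by induction on the complexity of restricted $\la_\R$-sentences $\sigma$, that $v^\R(\sigma)=V^\R(\sigma)=\sigma^\R$. The inductive step for $\sigma=\inf_x\varphi(x,a)$ uses the same Jung-type fact you cite: when $\R\nVdash\sigma\geq r$, one passes to a witness in some $N\supseteq\R$, extends $N$ to an infinitely generic $N_1$ (where forcing equals truth for $\varphi$), drops to a separable $N_2\preceq N_1$, embeds $N_2$ into $\R^\u$, and uses elementarity of $\R\hookrightarrow\R^\u$ to pull the bound back to $\R$. This uses the special feature of $\R$ at each quantifier step rather than trying to build a single elementary roof over both $\R$ and $M'$.
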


\begin{proof}
We prove, by induction on complexity of restricted $L(\R)$-sentences $\sigma$, that $v^\R(\sigma)=V^\R(\sigma)=\sigma^\R$.  This is clear for $\sigma$ quantifier-free.  Now suppose that $\sigma=\inf_x\varphi(x,a)$, where we display the parameters $a$ coming from $\R$.  We first prove that $V^\R(\sigma)\leq \sigma^\R$.  Suppose that $\sigma^\R<r$, so $\varphi(b,a)^\R<r$ for some $b\in \R$.  By the induction hypothesis, $\R\Vdash \varphi(b,a)<r$, so $\R\Vdash \sigma<r$ and $V^\R(\sigma)\leq r$.  We now prove that $\sigma^\R\leq v^\R(\sigma)$.  Suppose that $v^\R(\sigma)<r$, so $\R\nVdash \sigma \geq r$.  Then there is $N\supseteq \R$ and $b\in N$ such that $N\Vdash \varphi(b,a)<r$.  Let $N_1\supseteq N$ be infinitely generic.  Then
$$\varphi(b,a)^{N_1}= V^{N_1}(\varphi(b,a))\leq V^N(\varphi(b,a))<r.$$  Let $N_2\preceq N_1$ contain $\R$ and $b$.  Let $j:N_2\to \R^\u$ be an embedding.  Then $\varphi(j(b),j(a))^{\R^\u}<r$ whence $\inf_x\varphi(x,j(a))^{\R^\u}<r$.  Since the induced embedding $\R\hookrightarrow N_2\hookrightarrow \R^\u$ is elementary, we have $\sigma^\R=\inf_x\varphi(x,a)^\R<r$.   

The case that $\sigma=\sup_x\varphi(x,a)$ is similar, but we include a proof for the sake of completeness.  We first prove that $\sigma^\R\leq v^\R(\sigma)$.  Suppose that $v^\R(\sigma)<r$.  Then $\R\nVdash \sigma>r$, that is, $\R\nVdash \varphi(b,a)>r$ for all $b\in \R$, that is, $v^\R(\varphi(b,a))\leq r$.  By the induction hypothesis, $\varphi(b,a)^\R \leq r$ for all $b\in \R$, whence $\sigma^\R\leq r$.  We now show that $V^\R(\sigma)\leq \sigma^\R$.  Suppose that $V^\R(\sigma)> r$.  Then $\R\nVdash \sigma\leq r$.  Thus, there is $N\supseteq \R$ and $b\in N$ such that $N\Vdash \varphi(b,a)>r$.  Let $N_1\supseteq N$ be infinitely generic.  Then
$$\varphi(b,a)^{N_1}=v^{N_1}(\varphi(b,a))\geq v^N(\varphi(b,a))\geq r.$$  Let $N_2\preceq N_1$ contain $\R$ and $b$.  Let $j:N_2\to \R^\u$ be an embedding.  Then $\varphi(j(b),j(a))^{\R^\u}\geq r$, whence $\sup_x\varphi(x,j(a))^{\R^\u}\geq r$.  As before, that means that $\sigma^\R=\sup_x\varphi(x,a)^\R\geq r$.
\end{proof}

\begin{cor}
If $T=\Th_\forall(\R)$, then $T^g=\Th(\R)$.
\end{cor}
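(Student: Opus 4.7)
The plan is to derive $T^g=\Th(\R)$ by combining three pieces already in the paper: Proposition \ref{Rgeneric} (which puts $\R\in\G$), the earlier lemma that $T^g$ is complete iff $T$ has the joint embedding property, and the strong amalgamation corollary of Section \ref{amalg}. Once these are in place, completeness of $T^g$ together with $\R\models T^g$ will force $T^g=\Th(\R)$.

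First I would verify that $T=\Th_\forall(\R)$ has JEP. Given $M_1,M_2\models T$, each $M_i$ embeds into some ultrapower $\R^{\u_i}$ of $\R$, and each $\R^{\u_i}$ contains $\R$ through the diagonal embedding. By Lemma \ref{Rec}, $\R$ is an existentially closed model of $T$, so the strong amalgamation corollary from Section \ref{amalg} applies: $\R$ is a strong amalgamation base for the class of $\R^\omega$-embeddable tracial von Neumann algebras. Amalgamating $\R^{\u_1}$ and $\R^{\u_2}$ over $\R$ yields an $N\models T$ into which both $M_1$ and $M_2$ embed, giving JEP.

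With JEP in hand, the earlier lemma tells us $T^g$ is complete. Since $\R\in\G$ by Proposition \ref{Rgeneric}, we have $\R\models T^g=\Th(\G)$; combined with completeness of $T^g$ this forces $T^g=\Th(\R)$. The only step with any substantive content is the JEP verification, and even there the real work is done by the amalgamation corollary once we note that $\R$ is e.c.\ in $T$; everything else is purely formal.
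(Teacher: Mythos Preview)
Your proof is correct and follows the same route as the paper: establish JEP for $T$, invoke the lemma that $T^g$ is complete iff $T$ has JEP, and use $\R\in\G$ from Proposition~\ref{Rgeneric} to conclude $T^g=\Th(\R)$. The only difference is that the paper simply asserts JEP, whereas you prove it via the strong amalgamation corollary; this works, but it is heavier machinery than needed --- since $T=\Th_\forall(\R)$ is the universal theory of a single structure, any two models embed into ultrapowers of $\R$, which in turn embed into a common ultrapower, giving JEP directly without appealing to Section~\ref{amalg}.
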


\begin{proof}
Since $T$ has JEP, $T^g$ is complete; since $\R\models T^g$, it follows that $T^g=\Th(\R)$.
\end{proof}

\begin{cor}
Every $\R^\omega$ embeddable II$_1$ factor is contained in an e.c. model of $\Th(\R)$.
\end{cor}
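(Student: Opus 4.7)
The plan is to assemble the corollary from three facts already established in the forcing section. Set $T := \Th_\forall(\R)$ and $\k := \operatorname{Mod}(T)$; since $T$ is universally axiomatizable, $\k$ is closed under unions of chains and is therefore inductive, so the infinite-forcing apparatus developed above applies. A quick verification of this inductivity goes as follows: for a chain $(M_i)$ in $\k$ with union $M = \overline{\bigcup_i M_i}$ and a universal sentence $\sup_{\vec x}\varphi(\vec x)=0$ in $T$, any $\vec a\in M$ is a limit of tuples $\vec a^{(n)}\in M_{i_n}$, on which $\varphi$ takes value $0$ by the substructure property of quantifier-free formulae, and continuity then forces $\varphi(\vec a)^M\le 0$.

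Given an $\R^\omega$-embeddable II$_1$ factor $M$, I would first note $M\in\k$ and invoke Proposition \ref{contain} to produce an infinitely generic extension $N\in\G$ with $M\subseteq N$. Because every element of $\G$ is a model of $T^g = \Th(\G)$, and because the preceding corollary identifies $T^g$ with $\Th(\R)$, we immediately get $N\models\Th(\R)$.

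To finish, I would apply the earlier proposition that every infinitely generic structure is e.c.\ for $\k$. Since $\operatorname{Mod}(\Th(\R))$ is a subclass of $\k$, being e.c.\ in $\k$ formally implies being e.c.\ within $\operatorname{Mod}(\Th(\R))$: any extension of $N$ inside the smaller class is in particular an extension inside $\k$, so the required $\inf$-equality is inherited. Thus $N$ is an e.c.\ model of $\Th(\R)$ containing $M$. I do not anticipate any real obstacle; the content of the corollary lies entirely in the preceding forcing-companion results, and what remains is only to observe that the e.c.\ property transfers downward to the smaller axiomatizable class $\operatorname{Mod}(\Th(\R))$.
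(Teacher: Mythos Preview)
Your proof is correct and follows essentially the same route as the paper: embed $M$ into an infinitely generic $N$ via Proposition~\ref{contain}, invoke $T^g=\Th(\R)$ to get $N\models\Th(\R)$, and use that infinitely generic structures are e.c. The paper's proof is terser and leaves implicit the observation you spell out---that being e.c.\ for $\operatorname{Mod}(\Th_\forall(\R))$ passes to the subclass $\operatorname{Mod}(\Th(\R))$---but the logical content is the same.
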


\begin{proof}
Let $M$ be an $\R^\omega$-embeddable II$_1$ factor.  Then there is infinitely generic $N$ with $M\subseteq N$.  But since $T^g=\Th(\R)$, it follows that $N\equiv \R$.
\end{proof}

If we knew that $\Th(\R)$ were $\forall\exists$-axiomatizable, then the previous corollary would be immediate.  

\begin{cor}
There are continuum many nonisomorphic e.c. models of $\Th(\R)$.
\end{cor}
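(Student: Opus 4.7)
The plan is to combine the preceding corollary (every $\R^\omega$-embeddable II$_1$ factor embeds into an e.c.\ model of $\Th(\R)$) with Fact \ref{NPS} via a counting argument essentially identical to the one yielding the earlier corollary about e.c.\ models of $T_{\vna}$.

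First, I would invoke the family $(M_\alpha)_{\alpha<2^{\aleph_0}}$ of $\R^\omega$-embeddable II$_1$ factors from Fact \ref{NPS}. By the previous corollary, for each $\alpha<2^{\aleph_0}$, there exists an e.c.\ model $N_\alpha$ of $\Th(\R)$ with $M_\alpha\subseteq N_\alpha$. Since every e.c.\ model of $\Th_\forall(\R)$ is a (McDuff) II$_1$ factor, as noted in the introduction, each $N_\alpha$ is a II$_1$ factor.

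Next, I would define the equivalence relation $\alpha\sim\beta$ on $2^{\aleph_0}$ by $N_\alpha\cong N_\beta$. If $\alpha\sim\beta$, then $M_\beta\subseteq N_\beta\cong N_\alpha$, so $M_\beta$ embeds into the II$_1$ factor $N_\alpha$. By Fact \ref{NPS}, the set $\{\beta<2^{\aleph_0} : M_\beta \text{ embeds into } N_\alpha\}$ is at most countable, so every $\sim$-equivalence class is at most countable. Consequently, the number of $\sim$-equivalence classes, which equals the number of isomorphism classes among the $N_\alpha$, must be $2^{\aleph_0}$. This yields $2^{\aleph_0}$ pairwise nonisomorphic e.c.\ models of $\Th(\R)$, as desired.

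I do not anticipate any real obstacle: all the substantive content has already been deposited in the previous corollary (which depended on the infinite forcing companion computation $T^g=\Th(\R)$) and in Fact \ref{NPS}. The only point that even deserves comment is the observation that the $N_\alpha$ are genuine II$_1$ factors, so that Fact \ref{NPS} applies to them; this is immediate from the fact that e.c.\ models of $\Th_\forall(\R)$ are McDuff II$_1$ factors.
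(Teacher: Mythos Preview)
Your proposal is correct and follows exactly the same approach as the paper, which simply says ``Combine the previous corollary with Fact \ref{NPS}.'' You have spelled out the straightforward counting argument that the paper leaves implicit; the only minor remark is that the $N_\alpha$ are automatically II$_1$ factors simply because they model $\Th(\R)$, so you need not appeal to the McDuff property.
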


\begin{proof}
Combine the previous corollary with Fact \ref{NPS}.
\end{proof}
\section{Finitely generic structures}\label{fingenericsection}

There is another kind of model-theoretic forcing that is more in the spirit of Cohen's original notion of forcing which is often called \emph{finite (model-theoretic) forcing}.  This forcing was adapted to the continuous setting in \cite{BI} and we only recall the basic setup in order to give context to our results.

We work in a countable signature $\la$ and add countably many  new constant symbols $C$ to the language.  We fix a class $\k$ of structures and let $\k(C)$ denote the class of all structures $(M,a_c)_{c\in C_0}$, where $C_0$ is a finite subset of $C$.  We treat such structures as $\la(C_0)$-structures in the natural way.

Conditions are finite sets of the form $\{\varphi_1<r_1,\ldots,\varphi_n<r_n\}$, where each $\varphi_i$ is an atomic $\la(C)$-sentence and such that there is $M\in \k(C)$ such that $\varphi_i^M<r_i$ for each $i=1,\ldots,n$.  The partial order on conditions is reverse inclusion.  If $p$ is a condition and $\varphi$ is an atomic sentence of $\la(C)$, we define $f_p(\varphi):=\min\{r\leq 1 \ | \varphi<r\in p\}$, with the understanding that $\min(\emptyset)=1$.  For a condition $p$ and an $\la(C)$-sentence $\varphi$, we define the value $F_p(\varphi)\in [0,1]$ by induction on $\varphi$.
\begin{itemize}
\item $F_p(\varphi)=f_p(\varphi)$ if $\varphi$ is atomic.
\item $F_p(\neg \varphi)=\neg \inf_{q\supseteq p}F_q(\varphi)$.
\item $F_p(\frac{1}{2}\varphi)=\frac{1}{2}(\varphi)$.
\item $F_p(\varphi+\psi)=F_p(\varphi)+F_p(\psi)$.  (Truncated addition)
\item $F_p(\inf_x\varphi(x))=\inf_{c\in C}F_p(\varphi(c))$.
\end{itemize}

If $r\in \r$ and $F_p(\varphi)<r$, we say that $p$ \emph{forces} that $\varphi<r$, and write $p\Vdash \varphi<r$.

\begin{df}
We say that a nonempty set $G$ of conditions is \emph{generic} if the union of two elements of $G$ is once again an element of $G$ and for every $\la(C)$-sentence $\varphi$ and every $r>1$, there is $p\in G$ such that $F_p(\varphi)+F_p(\neg \varphi)<r$. 
\end{df}

If $G$ is generic and $\varphi$ is an $\la(C)$-sentence, set $\varphi^G:=\inf_{p\in G}F_p(\varphi)$.  We should also say that generic sets exist; in fact, any condition is contained in a generic set by \cite[Proposition 2.12]{BI}.

The following result is the combination of Lemma 2.16 and Theorem 2.17 in \cite{BI}.
\begin{thm}[Generic Model Theorem]
Let $M_0^G$ denote the term algebra $\mathcal T(C)$ equipped with the natural interpretation of the function symbols and interpreting the predicate symbols by $P^{M_0^G}(\vec \tau):=P(\vec \tau)^G$.  Let $M^G$ be the completion of $M_0^G$.  Then $M^G$ is an $\la(C)$-structure such that, for all $\la(C)$-sentences $\varphi$, we have $\varphi^{M^G}=\varphi^G$.
\end{thm}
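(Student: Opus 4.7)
The theorem splits into two assertions: first, that $M^G$ is a genuine $\la(C)$-structure (i.e., that the predicate interpretations on $M_0^G$ extend continuously to the completion), and second, the identity $\varphi^{M^G}=\varphi^G$ for every $\la(C)$-sentence $\varphi$. The plan is to handle these in order, with the second done by induction on the complexity of $\varphi$.

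For the first assertion, I would verify that for each predicate symbol $P$ with prescribed modulus of uniform continuity $\Delta_P$, the interpretation $P^{M_0^G}(\vec\tau):=P(\vec\tau)^G$ respects $\Delta_P$ on $\mathcal T(C)$. The key observation is that each condition $p$ is, by definition, realized in some $M_p\in\k(C)$, so $f_p(P(\vec\tau))$ is dominated by the value of $P$ in an honest structure satisfying the uniform continuity requirements; this property survives taking the infimum over $p\in G$. One also checks that the distinguished metric symbol $d$ yields a pseudometric on $\mathcal T(C)$, after quotienting by the set where $d^G(\tau,\tau')=0$; then $M^G$ is obtained as the metric completion, and the predicates extend uniquely by continuity.

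For the identity $\varphi^{M^G}=\varphi^G$, the atomic case is immediate from the definition of $M_0^G$, and the connectives $\tfrac{1}{2}\varphi$ and $\varphi+\psi$ (truncated addition) follow directly from the defining recursion for $F_p$ together with the fact that infima respect these continuous operations. For $\inf_x\varphi(x)$, the plan is to combine the clause $F_p(\inf_x\varphi(x))=\inf_{c\in C}F_p(\varphi(c))$ with the fact that $C$ is dense in $M^G$ by construction, and with the uniform continuity established in the previous paragraph, so that $\inf_{c\in C}\varphi(c)^{M^G}=\inf_{a\in M^G}\varphi(a)^{M^G}$.

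The subtle case, and what I expect to be the main obstacle, is negation. One direction is the easy half of genericity: for every $r>1$ there is $p\in G$ with $F_p(\varphi)+F_p(\neg\varphi)<r$, giving $\varphi^G+(\neg\varphi)^G\leq 1$. The reverse inequality requires the identity $\inf_{q\supseteq p}F_q(\varphi)=\varphi^G$ for each $p\in G$; this demands that the downward infimum over \emph{all} extensions of $p$ coincides with the infimum over extensions living in $G$. The plan here is to use directedness of $G$ together with a density argument: given any $q\supseteq p$ and $\epsilon>0$, one uses the existence of generic sets containing arbitrary conditions (together with the fact that $G$ is generic, so its forcing values match the minimal $F_q$ values up to $\epsilon$) to produce $p'\in G$ extending $p$ with $F_{p'}(\varphi)\leq F_q(\varphi)+\epsilon$. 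Getting this approximation to line up cleanly with the truncated-subtraction semantics of $\neg$ is the delicate part; once done, combining the two inequalities yields $(\neg\varphi)^G=1-\varphi^G=(\neg\varphi)^{M^G}$ by the inductive hypothesis, closing the induction.
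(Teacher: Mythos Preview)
The paper does not give its own proof of this theorem; it states the result as ``the combination of Lemma 2.16 and Theorem 2.17 in \cite{BI}'' and moves on. So there is no in-paper argument to compare against, and your outline is in effect a reconstruction of the Ben Yaacov--Iovino proof.

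Your overall strategy---verify that the predicate interpretations are uniformly continuous so that $M^G$ exists, then induct on the complexity of $\varphi$---is the standard one and is sound. The atomic, $\tfrac12$, $+$, and $\inf_x$ steps go exactly as you describe.

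There is a small slip in your treatment of negation. You write that the reverse inequality ``requires the identity $\inf_{q\supseteq p}F_q(\varphi)=\varphi^G$ for each $p\in G$,'' and then sketch a density argument to obtain it. In fact the equality need not hold (an extension $q\supseteq p$ not lying in $G$ can push $F_q(\varphi)$ strictly below $\varphi^G$), but you only need the inequality $\inf_{q\supseteq p}F_q(\varphi)\leq\varphi^G$, and this follows immediately from directedness of $G$ together with the easy monotonicity $q'\supseteq q\Rightarrow F_{q'}(\varphi)\leq F_q(\varphi)$: given $p,p'\in G$, take $q\in G$ with $q\supseteq p,p'$ and note $\inf_{r\supseteq p}F_r(\varphi)\leq F_q(\varphi)\leq F_{p'}(\varphi)$. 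Taking the infimum over $p'\in G$ gives the bound. Combined with the genericity inequality $\varphi^G+(\neg\varphi)^G\leq 1$ you already noted, this yields $(\neg\varphi)^G=1-\varphi^G$ directly, so the elaborate approximation you anticipate is not needed.
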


We say that an $\la$-structure $N$ is \emph{finitely generic for $\k$} if there is a generic $G$ such that $M$ is isomorphic to the $\la$-reduct of $M^G$.  Note that finitely generic structures exist as generic sets of conditions exist.  Finitely generic structures are existentially closed.  

Let $T^f$ denote the theory of the class of finitely generic models of $T$.  Then $T^f$ is a companion operator for $T$, called the \emph{finite forcing companion}, and is complete if and only if $T$ has JEP.  (See Chapter 5 of \cite{Hirsh} for the proofs of these claims in the classical case.)

\begin{prop}\label{ecgeneric}
Suppose that $M,N\in \k$, $M\subseteq N$, and $M$ is existentially closed in $N$.  If $N$ is finitely generic for $\k$, then $M$ is finitely generic for $\k$.
\end{prop}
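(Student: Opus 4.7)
The plan is to construct, from the given data, a generic set $G'$ of conditions whose associated generic model $M^{G'}$ is isomorphic to $M$ as an $\la$-structure; this witnesses that $M$ is finitely generic. First, fix a countable dense subset $\{m_i\}_{i<\omega}$ of $M$ and designate the constants $c_i\in C$ as names for $m_i$, making $M$ into an $\la(C)$-structure. Let $\mathbb{P}_M$ denote the subposet of conditions $p$ that are \emph{true in $M$} under this interpretation, meaning every inequality $\varphi<r$ appearing in $p$ satisfies $\varphi^M<r$.

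The generic set $G'$ will be built inside $\mathbb{P}_M$ by a standard back-and-forth enumeration of two kinds of tasks: (i) for each $\la(C)$-sentence $\psi$ and each $r>1$, find an extension $q\in\mathbb{P}_M$ with $F_q(\psi)+F_q(\neg\psi)<r$; (ii) for each atomic $\la(C)$-sentence $\varphi$ and each $\epsilon>0$, include the clause $\varphi<\varphi^M+\epsilon$ (trivially allowed, since $M$ itself witnesses the enlarged condition). Tasks of type (i) ensure genericity of the resulting set, while tasks of type (ii) ensure the atomic forcing values of $G'$ match those of $M$.

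The main obstacle is establishing that tasks of type (i) can always be performed within $\mathbb{P}_M$. This is proved by induction on $\psi$, the critical case being $\psi=\inf_x\chi(x)$. Here the two hypotheses combine: since $N$ is finitely generic, the forcing over $\k(C)$ provides, for any candidate $p$, an extension $q'$ (in the full forcing, with a new constant potentially interpreted as an element of $N$) that brings the forcing values of $\psi$ and $\neg\psi$ below $r$; since $M$ is existentially closed in $N$, the witness to the existential quantifier $\inf_x\chi(x)$ produced by the induction hypothesis applied inside $N$ can be replaced by an approximating element of $M$ within any desired error, producing an extension $q\in\mathbb{P}_M$ with the same forcing bounds up to a negligible adjustment. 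The inductive steps for $\sup$ and for the boolean connectives are analogous, with e.c.\ of $M$ in $N$ playing the same role whenever a new constant must be introduced.

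Once $G'$ is constructed, the Generic Model Theorem gives $\varphi^{M^{G'}}=\inf_{p\in G'}F_p(\varphi)$ for every atomic sentence $\varphi$. By construction, $G'\subseteq\mathbb{P}_M$ forces $F_p(\varphi)\geq\varphi^M$ for each $p\in G'$, hence $\varphi^{M^{G'}}\geq\varphi^M$; conversely, the tasks of type (ii) produce $\inf_{p\in G'}F_p(\varphi)\leq\varphi^M$. Equality on all atomic $\la(C)$-sentences under the fixed identification $c_i\leftrightarrow m_i$ yields an isomorphism $M^{G'}\cong M$ of $\la$-structures, so $M$ is finitely generic.
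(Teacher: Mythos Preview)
The paper does not give its own proof; it simply cites \cite[Proposition 5.15]{Hirsh} for the classical case and implicitly asserts that the continuous analogue goes through.  So your attempt is already more than the paper offers, and the overall strategy---build a generic $G'$ inside $\mathbb P_M$ so that $M^{G'}\cong M$---is indeed the right shape.

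There is, however, a genuine gap in your inductive step for $\psi=\inf_x\chi(x)$.  You invoke ``$N$ finitely generic'' to produce an extension $q'$ with a ``new constant potentially interpreted as an element of $N$'', and then ``$M$ e.c.\ in $N$'' to pull the witness back to $M$.  Two problems.  First, the genericity of $N$ comes with a \emph{specific} dense interpretation of $C$ in $N$ (via the isomorphism $N\cong M^G$ for some generic $G$); this interpretation has nothing to do with your chosen interpretation $c_i\mapsto m_i\in M$, so it is not clear what work $N$'s genericity is actually doing for your poset $\mathbb P_M$.  Second, and more seriously, the e.c.\ hypothesis only lets you transfer witnesses for $\inf$-of-quantifier-free formulas, whereas $\chi$ can have arbitrary complexity.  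Saying ``by the induction hypothesis applied inside $N$'' does not fix this: your induction hypothesis concerns extensions within $\mathbb P_M$, not truth or forcing in $N$.  If your argument really only used that $M$ is e.c.\ in $\k$ (which does follow, since $N$ is e.c.), it would show that \emph{every} e.c.\ structure is finitely generic, and that is false in general.

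The Hirschfeld--Wheeler argument the paper cites does not proceed by such a bare induction; it uses an intrinsic characterization of finite genericity (roughly: $M$ is finitely generic iff $M$ is e.c.\ and for each $\la_M$-sentence $\varphi$, truth of $\varphi$ in $M$ is equivalent to weak forcing of $\varphi$ by some condition from the diagram of $M$), and then transfers that characterization from $N$ to $M$ using the e.c.\ hypothesis at the level of the \emph{atomic} conditions that do the forcing.  You would need either to develop and use that characterization in the continuous setting, or to make precise how the generic $G$ witnessing $N$'s genericity interacts with your poset $\mathbb P_M$.
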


\begin{proof}
See \cite[Proposition 5.15]{Hirsh} for a proof in the classical case. 
\end{proof}

\begin{cor}
$\R$ is finitely generic and $\Th(\R)$ is the finite forcing companion of $\Th_\forall(\R)$.
\end{cor}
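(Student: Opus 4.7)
The plan is to mimic the infinite forcing argument from Proposition \ref{Rgeneric} and its corollary, now using Proposition \ref{ecgeneric} in place of its infinite-forcing analogue. Set $T := \Th_\forall(\R)$ and $\k := \operatorname{Mod}(T)$.

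First I will show that $\R$ itself is finitely generic. Since $T^f$ is a companion operator for $T$, the class of finitely generic models of $T$ is model-consistent with $\k$: every element of $\k$ embeds into a finitely generic model of $T$. (This is built into the framework of \cite{BI} and is precisely what makes $(T^f)_\forall = T_\forall$.) Applying this to $\R$, pick a finitely generic $N \in \k$ with $\R \subseteq N$. By Lemma \ref{Rec}, $\R$ is existentially closed in every extension of itself lying in $\k$, in particular in $N$. Proposition \ref{ecgeneric} then yields that $\R$ is finitely generic.

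Second, I will verify that $T^f = \Th(\R)$. Since $\R \models T^f$ by the first paragraph, it suffices to show that $T^f$ is complete, and by the stated fact this reduces to checking JEP for $T$. Given $M_1, M_2 \models T$, fix an index set $I$ and an ultrafilter $\u$ on $I$ large enough that both $M_1$ and $M_2$ embed into $\R^\u$ (possible since each $M_i$ embeds into some ultrapower of $\R$, and ultrafilters can be refined to a common one on a sufficiently large product index set). Because $\R \preceq \R^\u$, the ultrapower $\R^\u$ is itself a model of $T$, establishing JEP. Hence $T^f$ is complete and equal to $\Th(\R)$.

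The main obstacle is merely bookkeeping: invoking, rather than reproving, the model-consistency of the class of finitely generic structures with $\k$ in the continuous setting. Once that standard fact from \cite{BI} is in place, the rest is a direct translation of the infinite forcing argument, with Proposition \ref{ecgeneric} doing the work that was done in the infinite case by the analogous statement that e.c.\ substructures of infinitely generic structures are infinitely generic.
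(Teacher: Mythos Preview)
Your argument is correct and follows the paper's strategy: place $\R$ inside a finitely generic structure, apply Lemma~\ref{Rec} and Proposition~\ref{ecgeneric}, then use JEP to conclude completeness of $T^f$. The only difference is in how you obtain a finitely generic structure containing $\R$: you invoke model-consistency of the finitely generic class with $\k$, whereas the paper simply takes \emph{any} finitely generic $M$, observes that $M$ is e.c.\ and hence a II$_1$ factor, and then uses the operator-algebraic fact that $\R$ embeds into every II$_1$ factor. The paper's route needs only the bare existence of a single finitely generic structure (already noted in the setup), rather than model-consistency of the entire class; note also that your parenthetical derivation of model-consistency from $(T^f)_\forall = T_\forall$ runs the implication in the wrong direction---model-consistency implies that equality, not conversely---so if you keep your route you should cite model-consistency as a separate standard fact rather than as a consequence of the companion property.
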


\begin{proof}
Suppose that $M$ is a finitely generic model of $\Th_\forall(\R)$.  Then $M$ is e.c., whence a II$_1$ factor.  Since $\R$ embeds into $M$ and is e.c., we have that $\R$ is finitely generic.  The second claim follows from the fact that the forcing companion is complete whenever the original theory has JEP.
\end{proof}

Hodges' book \cite{H} describes the finitely generic models being at the ``thin'' end of the spectrum of e.c. models while the infinitely generic ones are at the ``fat'' end.  It is thus interesting that in the case of $\Th(\R)$, we have the prime model being both finitely and infinitely generic while simultaneously having a plethora of infinitely generic models (and yet not being model complete).

\end{document}